\documentclass[11pt]{article}

\usepackage[T1]{fontenc}
\usepackage[utf8]{inputenc}
\usepackage{lmodern}
\usepackage{microtype}

\usepackage[margin=0.88in]{geometry}

\usepackage{amsmath,amssymb,amsthm,mathtools}

\usepackage{booktabs}
\usepackage{tabularx}
\usepackage{enumitem}
\usepackage{seqsplit}

\usepackage{xcolor}
\usepackage[
    colorlinks=true,
    linkcolor=blue!55!black,
    citecolor=blue!55!black,
    urlcolor=blue!55!black
]{hyperref}

\hypersetup{
    pdftitle={A Proof of Fraenkel's Conjecture},
    pdfauthor={Hu Tan and Ying Zhang},
    pdfsubject={Fraenkel's conjecture, Beatty partitions, and Fourier rigidity}
}

\title{A Proof of Fraenkel's Conjecture}

\author{}
\date{}

\makeatletter
\renewcommand{\maketitle}{%
  \begin{center}

    \vspace*{-1.0em}

    {\LARGE
      \@title
      \par
    }

    \vspace{1.15em}

    {\large
      Hu Tan\textsuperscript{1}
      \hspace{2.0em}
      Ying Zhang\textsuperscript{2}
      \par
    }

    \vspace{0.85em}

    \begin{minipage}[t]{0.45\textwidth}
      \centering
      \small
      \textsuperscript{1}Academy of Mathematics and Systems Science\\
      Chinese Academy of Sciences\\
      Beijing 100190, China\\[0.25em]
      \footnotesize
      \href{mailto:tanhu2020@amss.ac.cn}
      {\texttt{tanhu2020@amss.ac.cn}}
    \end{minipage}
    \hspace{0.035\textwidth}
    \begin{minipage}[t]{0.45\textwidth}
      \centering
      \small
      \textsuperscript{2}School of Mathematical Sciences\\
      Soochow University\\
      Suzhou 215006, China\\[0.25em]
      \footnotesize
      \href{mailto:yzhang@suda.edu.cn}
      {\texttt{yzhang@suda.edu.cn}}
    \end{minipage}

    \vspace{0.9em}

  \end{center}

  \vspace{0.7em}
}
\makeatother

\newtheorem{theorem}{Theorem}[section]
\newtheorem{proposition}[theorem]{Proposition}
\newtheorem{lemma}[theorem]{Lemma}
\newtheorem{corollary}[theorem]{Corollary}

\theoremstyle{definition}

\theoremstyle{remark}

\newcommand{\Z}{\mathbb Z}
\newcommand{\U}{(\mathbb Z/Q\mathbb Z)^\times}
\newcommand{\B}{\mathcal B}
\newcommand{\normQ}[1]{\left\lVert #1\right\rVert_Q}
\newcommand{\ii}{\mathrm i}
\newcommand{\Harm}{\mathbf H}

\begin{document}
\maketitle

\begin{abstract}
Fraenkel's conjecture asserts that a partition of the integers into at least three Beatty sequences with distinct moduli has the binary densities \(1,2,4,\ldots,2^{m-1}\), normalized by \(2^m-1\).

We prove the conjecture through a dimension-free intermediate statement:
every such partition contains a component of density at least \(1/3\).

After reducing the partition to primitive common-period data, Fourier
cancellation produces a finite inverse-sine system.  We prove that no such
system can exist when every density is below \(1/3\).  The proof combines a
divisor-concentration identity with uniform analytic estimates and three
exact finite verifications, all carried out with integer or rational
arithmetic. 

The component supplied by the density bound has mean spacing at most three. Deleting it preserves balance, and every surviving periodic balanced set is
again a rational Beatty set. Induction determines the surviving binary
scales, while a two-sequence disjointness criterion forces the deleted density
to be the next binary scale. This yields the asserted density pattern.

An eventual Beatty representation of balanced binary indicators extends the
classification to one-sided balanced sequences on at least three letters
with distinct positive densities, proving the balanced-sequence conjecture
of Altman, Gaujal, and Hordijk.
\end{abstract}

\smallskip
\noindent\textit{2020 Mathematics Subject Classification.}
Primary 11B83; Secondary 11B75, 05B40.

\smallskip
\noindent\textit{Key words and phrases.}
Beatty sequences, exact partitions, balanced words, discrete Fourier analysis.
\section{Introduction and main result}

An exact partition by Beatty sequences is governed by continuous parameters,
but the conjectured answer is entirely discrete.  Fraenkel's conjecture says
that an exact partition into \(m\ge3\) Beatty sequences with pairwise distinct
moduli can occur only in one binary pattern.  The difficulty is therefore one
of rigidity: the covering identity must recover the whole pattern, not merely
produce examples.

For \(\alpha>1\) and \(\beta\in\mathbb R\), write
\[
 S(\alpha,\beta)=\{\lfloor n\alpha+\beta\rfloor:n\in\Z\}.
\]
This set has asymptotic density \(1/\alpha\) and mean spacing \(\alpha\).
Thus Fraenkel's conjecture concerns exact partitions of \(\Z\) into \(m\)
such sets with pairwise distinct moduli.  We use the bi-infinite formulation
because it is stable under translation and, after rational normalization,
is naturally periodic.

Fraenkel exhibited the binary family in 1973 and conjectured the weaker
divisibility statement that, in any such partition with at least three
components, two moduli have an integral ratio~\cite{Fraenkel1973}.
The stronger binary uniqueness formulation, now usually known as
Fraenkel's conjecture, was explicitly recorded by Erd\H{o}s and Graham
\cite[p.~19]{ErdosGraham1980}; see also the historical clarification in
\cite[Conjecture~1.4 and footnote~1]{DamasdiEtAl2026}.
The conjecture was proved for \(m\le6\) by Tijdeman and for \(m=7\)
by Bar\'at and Varj\'u~\cite{Tijdeman2000,BaratVarju2003}.
Dam\'asdi, Frankl, Pach, and P\'alv\"olgyi still record the general case
\(m\ge8\) as open~\cite[Conjecture~1.4 and the following paragraph]{DamasdiEtAl2026}.
We prove the conjecture in full.

The established approaches left a precise interface unresolved.  Tijdeman's
deletion method would complete the induction once a component of mean spacing
at most three were found~\cite[Remark~5, p.~479]{TijdemanBalanced2000}.
Under the contrary hypothesis, the Graham--O'Bryant transform produces
inverse-sine row inequalities on a minimum-gcd Fourier layer
\cite[Theorem~1.1 and Section~4.2]{GrahamOBryant2005}.  What was missing was
a dimension-free mechanism forcing one of those rows below one.  The
one-third theorem stated below supplies exactly this bridge.

\begin{theorem}[Fraenkel's conjecture]\label{thm:main}
Let \(m\ge3\).  Suppose that the sets
\(S(\alpha_i,\beta_i)\), \(1\le i\le m\), partition \(\Z\), and that
the moduli \(\alpha_i>1\) are pairwise distinct.  Then
\[
 \{\alpha_1,\ldots,\alpha_m\}
 =\left\{\frac{2^m-1}{2^j}:0\le j<m\right\}.
\]
\end{theorem}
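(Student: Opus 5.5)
The plan is to follow the strategy announced in the abstract: reduce to rational moduli, prove a dimension-free "one-third theorem", and then induct on \(m\) by deletion.

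\emph{Reduction to periodic data.} First I would use the classical fact that in an exact Beatty partition with \(m\ge3\) components and distinct moduli, all moduli are rational. This follows from the argument behind the Graham/Fraenkel results: an irrational modulus forces a complementary pair, and then there are only two components. Next I normalize the phases so that each \(S(\alpha_i,\beta_i)\) is periodic with a common period \(Q\), and write \(\alpha_i=Q/a_i\) with \(\sum a_i=Q\). The partition then becomes a partition of \(\Z/Q\Z\) into balanced (Beatty) sets of sizes \(a_i\). Passing to a primitive common period, I would take the Fourier transform of the indicator identity \(\sum_i 1_{S_i}=1\) at nonzero frequencies \(k\). Each indicator's transform is an explicit geometric sum, and it vanishes unless \(Q/\gcd(k,Q)\) divides suitable quantities. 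Cancellation at frequencies on the minimum-gcd layer gives, in the style of Graham--O'Bryant, a finite system of identities and inequalities in the quantities \(\sin(\pi a_i k/Q)/\sin(\pi k/Q)\).

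\emph{One-third theorem.} The key intermediate statement is that some component has density \(a_i/Q\ge 1/3\). I would argue by contradiction, assuming every \(a_i<Q/3\). Summing the inverse-sine rows over the unit group \(\U\) acting on frequencies should give a divisor-concentration identity: the total Fourier mass must concentrate on divisors of \(Q\) that cannot support small densities. Uniform analytic bounds on \(|\sin(\pi a x)/\sin(\pi x)|\) would then bound each row strictly below one when all densities are below \(1/3\), except in a bounded range of parameters (small \(Q\) or few large components). That leftover range would be closed by exact finite checks using rational arithmetic. \textbf{This is the main obstacle}: the estimates must be uniform in \(m\) and \(Q\). I would first try averaging over \(\U\), so that the problem becomes one about Ramanujan-type sums, with divisor sums controlled by the concentration identity.

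\emph{Induction.} Take a component \(S_1\) with \(\alpha_1\le3\). By Tijdeman's deletion lemma, \(\Z\setminus S_1\) re-indexed in increasing order is mapped to \(\Z\) by a monotone bijection. Under this map each remaining \(S_i\) becomes a periodic balanced set, hence again a rational Beatty set, and the moduli stay distinct. This gives an exact partition into \(m-1\) Beatty sets.
\begin{itemize}[leftmargin=2em]
\item If \(m-1\ge3\), induction shows the new moduli are \((2^{m-1}-1)/2^j\).
\item If \(m-1=2\), Beatty's theorem together with a balance analysis handles the base case (\(m=3\), which is also Tijdeman's theorem).
\end{itemize}
Rescaling back, the surviving densities are \(2^j c\) for \(0\le j<m-1\), where \(c=(1-1/\alpha_1)/(2^{m-1}-1)\). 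Finally, the remaining density \(1/\alpha_1\) must equal \(2^{m-1}c\). I would get this from a two-sequence disjointness criterion: two Beatty sets \(S(\alpha,\beta)\) and \(S(\alpha',\beta')\) with rational moduli can be disjoint only under an explicit divisibility relation. Applying it to \(S_1\) against the largest surviving component forces \(1/\alpha_1=2^{m-1}c\). Solving \(1/\alpha_1+(2^{m-1}-1)c=1\) then gives \(c=1/(2^m-1)\), which yields the set of moduli in Theorem~\ref{thm:main}.
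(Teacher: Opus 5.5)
\paragraph{Main gap: the one-third theorem.} Your outer architecture matches the paper's: Graham's rationality, primitive common-period data, the minimum-gcd Fourier layer, deletion of a letter of rate at most three, recognition of the periodic balanced survivors as rational Beatty sets, and reinsertion through the Morikawa--Simpson criterion. The gap is the one-third theorem, which is the entire new content, and you do not prove it. Phrases like ``should give'', ``would then bound'' and ``I would first try'' describe a hope, not a mechanism.

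Your one concrete claim is also false. Subcriticality does not make every row deficient, even for large \(Q\) and many elements. If \(\gcd(Q,6)=1\), \(Q>36\) and \(A\supseteq\{1,2,3,4,6,12\}\) is subcritical, then \(\normQ{12b^{-1}}=12/b\) for \(b\in\{1,2,3,4,6\}\). Since \(K_Q(d)\ge1/d\), the row at \(12\) has mass at least \((1+2+3+4+6)/12=4/3\). A proof must therefore locate a specific failing row.

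Averaging over \(\U\) destroys exactly the information needed for that. Summed over the \(\U\)-orbit of frequencies, the \(i\)-th term becomes \(g\sum_{v\in\U}|\sin(\pi a_iv/Q)/\sin(\pi v/Q)|\), which is independent of \(k\). The pairwise quantities \(\normQ{a_ka_i^{-1}}\), which carry all the divisor structure, disappear.

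What is missing is the paper's mechanism, which has three parts.
\begin{itemize}
\item \emph{Concentration from the minimum row.} Put \(s=\min A\), \(N=Q-s\), \(d_a=\normQ{sa^{-1}}\) and \(\delta_a=d_aa-N\). Lemma~\ref{lem:slack} gives the exact identity \(1-\sum_{a\ne s}1/d_a=(Q-\sum A)/N+\sum_{a\ne s}\delta_a/(d_aN)\) with \(\delta_a\in\{0,2s\}\cup[Q,\infty)\). A passing \(s\)-row therefore leaves weight \(W<16n/7\) on elements dividing neither \(Q-s\) nor \(Q+s\). The concentration is on divisors of \(Q\pm s\), not of \(Q\).
\item \emph{Choice of the failing row.} One uses the row of the least such good element \(x\), precisely to avoid the divisor-chain inflation in the example above. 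For \(s>1\), \(x\) has no selected proper divisor, its first product shell is empty, and \(1-M_x>1/6-16n/(7Q)\). For \(s=1\), the anchor divides everything. Dyadic chains then force a separate branch analysis, with harmonic propagation from the least nondyadic good element.
\item \emph{Uniformity in \(m\).} This is the obstacle you rightly flag. It is resolved by pitting the linear bounds \(Q=O(n)\) against the distinctness bound \(Q\ge\sum A\ge(n+1)(n+2)/2\). The remaining bounded range is closed by exact computation with upward-rounded kernel bounds.
\end{itemize}

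\paragraph{Two smaller points on the induction.} First, deleting a letter when \(m=3\) leaves a two-set partition, which carries no information about the ratio of the survivors. The base case must therefore be imported from the literature; the paper imports \(3\le m\le7\). A balance analysis of two complementary sets will not supply it.

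Second, the two-set criterion forces \(1/\alpha_1=2^{m-1}c\) only together with primitivity \(\gcd(p,h)=1\) and the one-third bound \(2p\ge Dh\). Here the survivors are \(h,2h,\ldots,Mh\), \(D=2M-1\) and \(q=p+Dh\). Without the one-third bound the conclusion fails. For \(M=4\) and \((p,h,q)=(3,1,10)\) one has \(p\ne Mh\) and \(\gcd(p,h)=1\), yet \(\B^{10}_{3,0}\equiv\{0,3,6\}\) and \(\B^{10}_{4,2}\equiv\{2,4,7,9\}\pmod{10}\) are disjoint. You have both hypotheses in hand. However, they do their work inside the case analysis of Lemma~\ref{lem:rigid-extension}, and your sketch does not indicate how.
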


For \(m=3\), the theorem predicts the primitive density pattern
\(\{1/7,2/7,4/7\}\), or equivalently the moduli
\(\{7,7/2,7/4\}\).  It is realized by
\(S(7,0)\), \(S(7/2,2)\), and \(S(7/4,3)\): modulo \(7\), these sets occupy
\(\{0\}\), \(\{2,5\}\), and \(\{1,3,4,6\}\), respectively, and hence
partition \(\Z\).  The higher-dimensional pattern continues the same
doubling rule.

The point at which the analytic and combinatorial arguments meet is the
following density bound, independent of the number of parts.

\begin{theorem}[One-third theorem]\label{thm:one-third-main}
Under the hypotheses of Theorem~\ref{thm:main},
\[
 \max_i\frac1{\alpha_i}\ge\frac13.
\]
\end{theorem}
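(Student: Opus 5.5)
We argue by contradiction: assume that every density \(d_i=1/\alpha_i\) satisfies \(d_i<1/3\), and derive an impossible finite inverse-sine system.

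\emph{Step 1: reduction to rational periodic data.} For \(m\ge3\) with distinct moduli, an exact Beatty partition forces all \(\alpha_i\) to be rational. This is the classical Fraenkel--Graham fact: two irrational Beatty sets partition only as complementary pairs, and a partition with more parts cannot contain an irrational modulus. After a translation and a small perturbation of the \(\beta_i\) that does not change any of the sets, we may write \(\alpha_i=Q/a_i\) with common period \(Q\). Then each \(S(\alpha_i,\beta_i)\) is a union of \(a_i\) residue classes modulo \(Q\), with
\[
\sum_i a_i=Q,\qquad \gcd(a_1,\ldots,a_m,Q)=1
\]
(the last by passing to a primitive period). The standing assumption becomes \(3a_i<Q\) for every \(i\).

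\emph{Step 2: Fourier cancellation.} Take the discrete Fourier transform of the indicator identity \(\sum_i \mathbf 1_{S_i}=1\) at a nonzero frequency \(k\in\Z/Q\Z\). Each Beatty set mod \(Q\) is a "balanced arithmetic orbit", so its transform has the closed form
\[
\widehat{\mathbf 1_{S_i}}(k)=e(\theta_i k)\,\frac{\sin(\pi k a_i/Q)}{\sin(\pi k/Q)}
\quad\text{whenever } \gcd(k,Q)=1,
\]
with an analogous formula on divisor layers \(\gcd(k,Q)=g\). Summing over \(i\) gives \(\sum_i e(\theta_i k)\sin(\pi k a_i/Q)=0\) for all \(k\not\equiv0\). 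Following Graham--O'Bryant, we restrict to the layer of \(k\) whose gcd with \(Q\) is minimal among those with a nonvanishing contribution. By the triangle inequality, each summand is bounded by the others, which yields the row inequalities
\[
\bigl|\sin(\pi k a_j/Q)\bigr|\le\sum_{i\ne j}\bigl|\sin(\pi k a_i/Q)\bigr| .
\]
Inverting these, as in the inverse-sine transform, gives a finite system constraining the residues \(k a_i \bmod Q\) as \(k\) ranges over \(\U\).

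\emph{Step 3: averaging over the unit group (main obstacle).} We average \(|\sin(\pi k a_i/Q)|\) over \(k\in\U\). By Ramanujan-sum and divisor identities this average depends only on \(Q/\gcd(a_i,Q)\). We then pick \(k\) maximizing the share of the largest term. A divisor-concentration identity should show that some unit \(k\) places one \(k a_j\) near \(Q/2\) while the others stay small, which violates the row inequality.

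This step is where the argument is hardest and where it must be dimension-free. Averaging tends to lose a constant factor, and that loss is exactly what the threshold \(1/3\) must absorb. I would first try a second-moment argument: compare \(\sum_k\bigl(\sum_i|\sin|\bigr)^2\) with \(\sum_k\max_i|\sin|^2\), using the bound \(a_i<Q/3\) to control off-diagonal correlations via gcd structure. Uniform analytic estimates should handle large \(Q\) or large \(m\). The leftover small cases, meaning small \(Q\) and few distinct denominators \(Q/\gcd(a_i,Q)\), would be settled by exact finite verification in rational arithmetic. Together these give the contradiction, and hence \(\max_i 1/\alpha_i\ge1/3\).
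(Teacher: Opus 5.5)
There is a genuine gap, and it begins in Step~2: your transform formula is not the transform of a Beatty set. If $\gcd(a_i,Q)=1$, then $x\in\B^Q_{a_i,0}$ exactly when $(-xa_i)\bmod Q\in\{0,\dots,a_i-1\}$. So the set is the preimage of a block of $a_i$ consecutive residues under multiplication by $a_i$. Up to a unimodular factor, its transform at $k$ is the block transform at frequency $ka_i^{-1}$, of modulus $|\sin(\pi k/Q)|/|\sin(\pi k a_i^{-1}/Q)|$; this is the Graham--O'Bryant formula the paper uses. Your expression $\sin(\pi ka_i/Q)/\sin(\pi k/Q)$ is the transform of the block itself. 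For example, the set $\{2,5\}$ modulo $7$ has transform $2\cos(4\pi/7)$ at $k=1$, of modulus $\sin(\pi/7)/\sin(4\pi/7)\approx0.445$, whereas your formula gives $2\cos(\pi/7)\approx1.80$. So the identity $\sum_i e(\theta_ik)\sin(\pi ka_i/Q)=0$ and your sine row inequalities do not follow from the partition, and ``inverting these'' has no content.

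The correct system comes from one specific frequency, not from letting $k$ range over the units. Put $g=\min_i\gcd(a_i,Q)$ and take $k=a_j$ for some $j$ with $\gcd(a_j,Q)=g$. Every set with larger gcd vanishes there, and the $j$-th term has the smallest possible denominator. With $\bar a_i=a_i/g$ modulo $Q/g$, the triangle inequality gives $1\le\sum_{i\ne j}K(\lVert\bar a_j\bar a_i^{-1}\rVert_{Q/g})$, where $K(d)=\sin(\pi g/Q)/\sin(\pi gd/Q)$. All later arithmetic comes from this kernel behaving like $1/d$ with $d\,\bar a_i\equiv\pm\bar a_j$; the sine version has no such structure.

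Step~3 is not yet an argument. It lists things to try (``should show'', ``I would first try''), and what it leaves open is the whole difficulty: that such a system, subject to $3\bar a<Q/g$ and $\sum\bar a\le Q/g$, always has a deficient row. That is the paper's subcritical rigidity theorem, a case of the Graham--O'Bryant inverse-sine conjecture. You note yourself that averaging over $\U$ loses constants. You give no mechanism by which $3a_i<Q$ enters, and none that reduces the problem to finitely many cases.

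The paper works row by row, not by averaging. Rename the layer modulus as $Q$ and the reduced numerators as $A$, and let $s=\min A$, $N=Q-s$, $d_a=\normQ{sa^{-1}}$.
\begin{itemize}
\item One has $d_aa\in\{N,N+2s\}\cup[N+Q,\infty)$. This gives the exact slack identity $1-\sum_{a\ne s}1/d_a=(Q-\sum A)/N+\sum_{a\ne s}(d_aa-N)/(d_aN)$.
\item A passing $s$-row therefore puts all but $O(n)$ of the numerator weight, $n=|A|-1$, on divisors of $Q\pm s$ (the good elements).
\item For $s>1$, let $x$ be the least good element; no other element of $A$ divides it. In the $x$-row every other $a$ has $a\normQ{xa^{-1}}\ge Q-x$, and every good one except possibly $x+s$ reaches at least $2Q-x$. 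Since the bad weight is small, this gives the fixed gap $1-M_x>1/6-16n/(7Q)$. Combined with $Q\ge r(r+3)/2$, $r=|A|$, it forces $Q\le301$.
\item For $s=1$ the anchor divides everything, so dyadic chains $2,4,8,\dots$ need a separate harmonic envelope.
\item Exact finite verifications close the bounded ranges.
\end{itemize}
Without an analogue of this divisor concentration and explicit cutoff, your sketch has neither a uniform estimate nor a finite residue to check, so it does not reach a contradiction.
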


Equivalently, some component has mean spacing at most three.  Tijdeman
identified precisely this conclusion as the missing input for a general
deletion induction.

The classification also settles the balanced-sequence conjecture of
Altman, Gaujal, and Hordijk~\cite[Conjecture~2.25]{AltmanGaujalHordijk2000},
restated as Conjecture~2.2 in~\cite{DamasdiEtAl2026}.
A one-sided infinite word is \emph{balanced} if, for each letter, its
numbers of occurrences in any two finite factors of the same length differ
by at most one.  Its letter densities are the limiting proportions in
initial factors.

\begin{corollary}[Balanced-sequence classification]\label{cor:balanced}
Let \(m\ge3\) and \(\rho_1>\cdots>\rho_m>0\).  A balanced one-sided infinite
word on \(m\) letters with respective densities \(\rho_1,\ldots,\rho_m\) exists
if and only if
\(\rho_i=2^{m-i}/(2^m-1)\) for every \(1\le i\le m\).
\end{corollary}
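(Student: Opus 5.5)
We derive Corollary~\ref{cor:balanced} from Theorem~\ref{thm:main}, handling the two directions separately.

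\emph{Sufficiency.} We exhibit a balanced word with the binary densities. The periodic partition of Theorem~\ref{thm:main} with moduli \((2^m-1)/2^{j}\) assigns letter \(i\) to the positions of \(S((2^m-1)/2^{m-i},\beta_i)\). For \(m=3\) this is the period-\(7\) example in the introduction, and the doubling construction extends it to every \(m\). Each Beatty set \(S(\alpha,\beta)\) is balanced: its number of elements in any window of length \(L\) is \(\lfloor\cdot\rfloor\)-close to \(L/\alpha\), so two windows of the same length differ by at most one. Restricting the word to \(n\ge0\) then gives a balanced one-sided word with densities \(2^{m-i}/(2^m-1)\).

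\emph{Necessity.} Let \(w\) be a balanced one-sided word on \(m\ge3\) letters with distinct positive densities \(\rho_1>\cdots>\rho_m\).

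First, every balanced binary indicator sequence \(x\in\{0,1\}^{\mathbb N}\) with density \(\rho>0\) agrees, from some point on, with the indicator of a Beatty set \(\{\lfloor n/\rho+\beta\rfloor\}\). This is the "eventual Beatty representation" mentioned in the abstract. For irrational \(\rho\) it is the classical Morse--Hedlund/Sturmian characterization, which holds up to a skew prefix. For rational \(\rho\), a balanced word is eventually periodic with period equal to the denominator of \(\rho\), and its periodic part is a rational Beatty set. Applying this to each letter, there is an \(N\) beyond which every letter class coincides with a Beatty set of modulus \(\alpha_i=1/\rho_i\).

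Second, we pass to a bi-infinite partition. Take a limit of shifts \(T^{k}w\) in the compact space of two-sided words. Because balance is a closed condition, any two-sided limit point is balanced, and the letter classes in it are limits of the shifted Beatty sets. Shifted Beatty sets of a fixed modulus have compact phase, so along a subsequence these classes converge to sets \(S(\alpha_i,\beta_i')\) on all of \(\Z\). Rational \(\alpha_i\) need some care at boundary phases, where \(\lceil\cdot\rceil\) may replace \(\lfloor\cdot\rfloor\). This is handled by reflecting \(n\mapsto-n\), or by noting that \(\{\lceil n\alpha+\beta\rceil\}\) equals \(S(\alpha,\beta')\) for some nearby \(\beta'\) when \(\alpha\) is rational, and otherwise by choosing the subsequence to avoid boundary phases.

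The resulting sets partition \(\Z\) and have pairwise distinct moduli \(\alpha_i>1\). Theorem~\ref{thm:main} therefore gives \(\{1/\rho_i\}=\{(2^m-1)/2^j\}\), and ordering the densities decreasingly yields \(\rho_i=2^{m-i}/(2^m-1)\).

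The main obstacle is the eventual Beatty representation. In particular, the rational-density case needs a rigorous argument that balanced binary words are eventually periodic with the right period, and that the limiting two-sided sets are genuine floor-Beatty sets rather than ceiling variants. I would handle this with the standard mechanical-word characterization: a balanced word is eventually equal to a lower or upper mechanical word, and upper mechanical words are translates of lower ones.
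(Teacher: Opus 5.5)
Your sufficiency direction is not actually proved. Theorem~\ref{thm:main} only gives a necessary condition and asserts that no partition exists outside the binary pattern; it does not assert that the binary partition exists. So there is no ``periodic partition of Theorem~\ref{thm:main}'' to invoke, and ``the doubling construction'' is never specified. You must exhibit the shifts, or cite Fraenkel's construction explicitly, and check that they give a partition. The paper takes \(Q=2^m-1\), \(P_i=2^{m-i}\), \(B_i=S(Q/P_i,1-2^{i-1})\). Since \(Q/P_i=2^i-1/P_i\), the indices \(1\le j\le P_i\) give exactly \(2^{i-1}(2j-1)\), i.e.\ the integers in \(\{1,\dots,Q\}\) of \(2\)-adic valuation \(i-1\). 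These \(Q\)-periodic sets therefore partition \(\Z\). Your window-count balance argument and the restriction to \(n\ge0\) then finish as you say.

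For necessity, your first step agrees with the paper's Lemma~\ref{lem:eventual-beatty}. Your passage to a bi-infinite partition takes a different route, and its boundary step is both misplaced and unjustified. For rational \(\alpha_i\) nothing needs care: \(S(\alpha_i,\beta_i)-k\) depends only on \(k\) modulo the numerator of \(\alpha_i\), so the limiting class is exactly a translated floor Beatty set.

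The danger is at irrational \(\alpha_i\). If the limiting phase \(\theta_i\) lies in \(\Z+\alpha_i\Z\) and is approached from below, the limit class is \(\{\lceil n\alpha_i+\theta_i\rceil-1:n\in\Z\}\). This set is not of the form \(S(\alpha_i,\beta)\), so Theorem~\ref{thm:main} no longer applies. Reflecting \(n\mapsto-n\) does not help, because it turns the other letters' floor sets into ceiling sets. Avoiding singular phases must also be done simultaneously for all irrational letters, and you assert this without argument.

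It can be done. The accumulation points of the phase vectors \((\beta_i-k)_i\) form a coset of the closed subgroup generated by \((1,\dots,1)\) in \(\prod_i\mathbb R/\alpha_i\Z\). That subgroup surjects onto each irrational circle, so the countably many singular values pull back to Haar-null sets, and a nonsingular limit exists.

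The paper's route is shorter and needs no limit at all. The positive tails already form an eventual covering with \(m\ge3\) distinct moduli, so Graham's corollary makes every \(\rho_i\) rational directly. The covering multiplicity of the extended sets is then periodic and equals one on a tail, hence equals one on all of \(\Z\).
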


Section~\ref{subsec:balanced-corollary} proves this consequence by showing
that each balanced binary indicator agrees with a Beatty indicator on a
tail.  Graham's rationality theorem then gives a common periodic extension
to which Theorem~\ref{thm:main} applies.

\subsection{Proof architecture}

The proof uses four established inputs: Graham's rationality reduction
\cite{Graham1973}; the rational Beatty transform of Graham and O'Bryant
\cite[Theorem~1.1]{GrahamOBryant2005}; Fraenkel's rescaling
\cite[Lemma~6]{Fraenkel1973} and Tijdeman's balanced deletion
\cite{TijdemanBalanced2000}; and the Morikawa--Simpson criterion for two
disjoint rational Beatty sets
\cite{Morikawa1985,Simpson2004}.  The new work is the dimension-free
subcritical rigidity theorem, its exact finite closure, and the rigid
extension that turns the one-third bound into the full classification.

The central analytic input is the subcritical inverse-sine rigidity theorem,
Theorem~\ref{thm:SC}.  Informally, after the minimum-gcd normalization the
relevant numerators are distinct units modulo a denominator \(Q\), each is
smaller than \(Q/3\), and their total is at most \(Q\).  The theorem says that
one of their inverse-sine interaction rows must have total mass below one,
contradicting the lower bound imposed by Fourier cancellation.

The proof of this statement has a single organizing mechanism.  Let \(s\) be
the smallest relevant numerator.  An exact slack identity shows that a row
with mass at least one can place very little numerator weight away from
divisors of \(Q-s\) and \(Q+s\).  For \(s>1\), the least such divisor creates
a fixed gap in another row.  For \(s=1\), that gap may be hidden by a dyadic
predecessor chain, and a harmonic propagation argument takes its place.
These uniform estimates reduce the proof to three finite arithmetic systems.
Each finite inequality is a one-sided upper bound that preserves the
implication from a genuine counterexample.  Exact arithmetic then excludes
the bounded cases; Appendix~\ref{app:verification} states the corresponding
finite verification statements.

Once Theorem~\ref{thm:one-third-main} is available, the remainder is
combinatorial.  Deleting a component of mean spacing at most three preserves
balance.  Periodicity then identifies each compressed indicator as a rational
Beatty set, so induction makes the surviving densities binary up to a common
scale.  The Morikawa--Simpson two-sequence criterion forces the deleted
component to be the next binary scale.

Sections~\ref{sec:normalization}--\ref{sec:fourier} set up these two
interfaces and reduce the analytic part to Theorem~\ref{thm:SC}.
Sections~\ref{sec:slack}--\ref{sec:s-one} prove that theorem, first through
the common slack mechanism and then in the cases \(s>1\) and \(s=1\).
Section~\ref{sec:induction} returns to Beatty words and completes the
classification.

Readers interested primarily in the reconstruction may read
Sections~\ref{sec:normalization} and~\ref{sec:deletion}, take
Theorem~\ref{thm:one-third-main} as the analytic input, and continue with
Section~\ref{sec:induction}.  The intervening sections prove precisely that
input.

\section{Rational and primitive normalization}\label{sec:normalization}

The Fourier argument requires a common finite period, whereas the conjecture
is stated with arbitrary real shifts and a priori unrelated real moduli.  We
remove these two degrees of freedom explicitly.  The resulting integer
shifts, normalized densities, and gcd layers will also be used in the final
induction.

\subsection{From real data to a common period}

Graham's corollary states that an eventual covering family with at least
three members and an irrational modulus must contain two equal
moduli~\cite[Corollary, p.~358]{Graham1973}.  Since our moduli are pairwise
distinct, they are therefore all rational.  To see that the corollary
applies, fix \(i\).  The sequence
\(\lfloor n\alpha_i+\beta_i\rfloor\) is strictly increasing because
\(\alpha_i>1\).  Hence there is an integer \(N_i\) such that its positive
terms are exactly those with \(n\ge N_i\).  Reindexing
\(n=N_i,N_i+1,\ldots\) turns the positive part into a standard one-sided
Beatty sequence.  Since the original sets partition \(\Z\), these positive
parts partition the positive integers.

For rational data, every real shift determines an integer-translated rational
Beatty set.  We record the short calculation because it also identifies the
translation.  Let the reduced modulus be \(q/p\), with \((p,q)=1\), and
write
\[
 \beta=b+\gamma,\qquad b\in\Z,\qquad 0\le\gamma<1.
\]
If \(p=1\), then for every \(n\in\Z\),
\[
 \left\lfloor\frac{nq}{p}+b+\gamma\right\rfloor
 =\lfloor nq+b+\gamma\rfloor=nq+b.
\]
Thus \(S(q,\beta)=\{nq+b:n\in\Z\}\) already has the required
integer-translate form.  We henceforth assume \(p>1\).
There is a unique integer \(h\in\{0,1,\ldots,p-1\}\) such that
\[
 \frac hp\le\gamma<\frac{h+1}{p}.
\]
Because \((p,q)=1\), multiplication by \(q\) permutes the residue classes
modulo \(p\).  Hence we may choose \(j\in\Z\) with
\(jq\equiv h\pmod p\), and then write
\[
 jq=h+tp\qquad(t\in\Z).
\]
After replacing the index \(n\) by \(n-j\), we obtain
\begin{align*}
 \frac{(n-j)q}{p}+b+\gamma
 &=\frac{nq}{p}+(b-t)+\left(\gamma-\frac hp\right),\\
 0&\le\gamma-\frac hp<\frac1p.
\end{align*}
The fractional part of \(nq/p\) is one of
\(0,1/p,\ldots,(p-1)/p\).  Adding a number in \([0,1/p)\) cannot move any
of these points across the next integer.  Consequently
\[
 \left\lfloor\frac{(n-j)q}{p}+b+\gamma\right\rfloor
 =\left\lfloor\frac{nq}{p}\right\rfloor+b-t.
\]
Thus an arbitrary real shift changes only the integer translate.  This is
also the normalization used in Graham--O'Bryant
\cite[p.~284, footnote~1]{GrahamOBryant2005}.  We may therefore work with
\[
 \B^q_{p,r}:=\left\{\left\lfloor\frac{nq}{p}+r\right\rfloor:n\in\Z\right\},
 \qquad 0<p<q,\quad r\in\Z.
\]
Its indicator is \(q\)-periodic and has exactly \(p\) ones in every block
of \(q\) consecutive positions.  Thus its density is \(p/q\), while its
average spacing---equivalently, its Beatty modulus---is \(q/p\).
Indeed, increasing the Beatty index by \(p\) increases the output by \(q\),
so the set is invariant under translation by \(q\); the indices
\(0,1,\ldots,p-1\) give exactly \(p\) points modulo \(q\).

For each \(i\), write the rational modulus in lowest terms as
\[
 \alpha_i=\frac{b_i}{a_i},\qquad
 a_i,b_i\in\Z_{>0},\qquad 0<a_i<b_i,\qquad (a_i,b_i)=1.
\]
The preceding shift normalization supplies an integer \(r_i\) such that
\(S(\alpha_i,\beta_i)=\B^{b_i}_{a_i,r_i}\).  Choose a positive integer
\(q\) divisible by every \(b_i\), and put
\[
 c_i=\frac q{b_i},\qquad p_i=c_i a_i.
\]
Then
\[
 \B^{b_i}_{a_i,r_i}
 =\left\{\left\lfloor\frac{nb_i}{a_i}+r_i\right\rfloor:n\in\Z\right\}
 =\left\{\left\lfloor\frac{nq}{p_i}+r_i\right\rfloor:n\in\Z\right\}
 =\B^q_{p_i,r_i}.
\]
Thus \((q;(p_i,r_i)_{i=1}^m)\) is a selected common-denominator
presentation of the same partition, with
\[
 \frac1{\alpha_i}=\frac{p_i}{q},\qquad 0<p_i<q.
\]
Each indicator is \(q\)-periodic and has exactly \(p_i\) ones on the
\(q\) residue classes.  Since the indicators sum to one on every residue
class, summing over one period gives
\[
 \sum_{i=1}^m p_i=q.
\]

This presentation is generally noncanonical.  Put
\[
 d=\gcd(q,p_1,\ldots,p_m),\qquad q'=q/d,\qquad p_i'=p_i/d.
\]
Because \(q=c_i b_i\), \(p_i=c_i a_i\), and \((a_i,b_i)=1\),
\[
 \gcd(q,p_i)=c_i.
\]
Hence \(d\mid c_i\) for every \(i\).  Writing \(c_i'=c_i/d\), we have
\[
 q'=c_i'b_i,
 \qquad p_i'=c_i'a_i,
 \qquad
 \B^{q'}_{p_i',r_i}=\B^q_{p_i,r_i}.
\]
Consequently the divided data are still a common-denominator presentation
of exactly the same sets with the same integer shifts.  They satisfy
\[
 0<p_i'<q',\qquad \sum_{i=1}^m p_i'=q',\qquad
 \gcd(q',p_1',\ldots,p_m')=1.
\]
We henceforth rename \(q',p_i'\) as \(q,p_i\), obtaining the
\emph{primitive data}
\begin{equation}\label{eq:primitive}
 0<p_i<q,\qquad \sum_{i=1}^m p_i=q,\qquad
 \gcd(q,p_1,\ldots,p_m)=1.
\end{equation}
The normalized numerators are pairwise distinct: equality \(p_i=p_j\)
would give \(\alpha_i=q/p_i=q/p_j=\alpha_j\).
Theorem~\ref{thm:main} is equivalent to
\[
 q=2^m-1,\qquad
 \{p_1,\ldots,p_m\}=\{1,2,4,\ldots,2^{m-1}\}.
\]
Put \(D=2^m-1\).  Since
\(\alpha_i=q/p_i\), binary primitive data immediately give the moduli
\(D/2^j\), \(0\le j<m\).  Conversely, suppose the moduli are precisely
these numbers.  Their densities are \(2^j/D\).  If \(p_*/q=1/D\) is the
common-denominator representation of the smallest density, then
\(q=Dp_*\).  Writing \(c=p_*\), the numerator representing density
\(2^j/D\) is \(c2^j\).  Hence
\[
 \gcd(q,p_1,\ldots,p_m)
 =c\,\gcd(D,1,2,\ldots,2^{m-1})=c.
\]
Primitivity makes this gcd equal to one, so \(c=1\), proving the converse.

\subsection{Primitive formulation and recurring notation}

In the primitive data \eqref{eq:primitive}, Theorem~\ref{thm:one-third-main}
is equivalent to
\[
 \max_i p_i\ge\frac q3.
\]
The strict negation \(3p_i<q\) will be used in the Fourier argument.  The
same letter \(Q\) always denotes the denominator after passing to one gcd
layer, whereas \(q\) denotes the original common denominator.

Table~\ref{tab:notation} collects the symbols that occur throughout the
proof.

\begin{table}[ht]
\centering
\small
\begin{tabularx}{\textwidth}{@{}p{0.28\textwidth}X@{}}
\toprule
Symbol & Meaning \\
\midrule
\(q,p_i\) & Primitive common-denominator data; the \(i\)-th density is
\(p_i/q\) and its modulus is \(q/p_i\).\\
\(g_i=(p_i,q)\) & Gcd attached to the \(i\)-th numerator.\\
\(g=\min_i g_i\), \(Q=q/g\) & Minimum gcd and the denominator of the
minimum layer.\\
\(A=\{a_i:p_i=ga_i,\ g_i=g\}\) & Distinct unit representatives in the
minimum layer.\\
\(s=\min A\), \(n=|A|-1\) & Smallest representative and the number of
other rows.\\
\(\normQ{x}\) & Least positive cyclic distance from \(x\) to
\(0\pmod Q\).\\
\(K_Q(d)\) & The inverse-sine kernel
\(\sin(\pi/Q)/\sin(\pi d/Q)\).\\
\bottomrule
\end{tabularx}
\caption{Recurring notation.}
\label{tab:notation}
\end{table}

The three bounded cases left by the uniform arguments are verified exactly as
specified in Appendix~\ref{app:verification}.  The two integer verifications
and the rational refinement make no finite decision using floating-point
trigonometry.

\section{Balanced-word deletion}\label{sec:deletion}

The one-third theorem will be useful only if its large component can be
removed without leaving the class of objects needed for induction.  In the
coding word of a Beatty partition, a density at least \(1/3\) corresponds to
a letter of mean spacing at most three.  The purpose of this section is to
show that deleting such a letter preserves balance.  The later reconstruction
in Section~\ref{sec:induction} will use periodicity to recognize the
compressed indicators as rational Beatty sets.

Let \(m\) be a positive integer, let \(\alpha_i>1\) and
\(\beta_i\in\mathbb R\) for \(1\le i\le m\), and suppose that the sets
\(S(\alpha_i,\beta_i)\) partition \(\Z\).  Their coding word is the unique
word \(W\in\{1,\ldots,m\}^{\Z}\) such that, for every \(z\in\Z\) and
every \(i\in\{1,\ldots,m\}\),
\[
 W_z=i\quad\Longleftrightarrow\quad z\in S(\alpha_i,\beta_i).
\]
Let \(\mathcal A\) be a finite alphabet.  For a finite word
\(Y\in\mathcal A^*\) and a letter \(c\in\mathcal A\), let \(|Y|_c\)
denote the number of occurrences of \(c\) in \(Y\).  A bi-infinite word
\(x\in\mathcal A^{\Z}\) is \emph{balanced} if, for every
\(c\in\mathcal A\), any two finite factors \(U,V\) of \(x\) of equal
length satisfy
\[
 \bigl||U|_c-|V|_c\bigr|\le1.
\]

We first verify the standard fact that a Beatty coding word is balanced.
Fix one letter, represented by
\[
 S(r,\beta)=\{\lfloor jr+\beta\rfloor:j\in\Z\},\qquad r>1.
\]
For the integer interval \(I=\{u,u+1,\ldots,u+N-1\}\),
\begin{align*}
 |S(r,\beta)\cap I|
 &=\#\{j\in\Z:u\le\lfloor jr+\beta\rfloor<u+N\}\\
 &=\#\left(\Z\cap
   \left[\frac{u-\beta}{r},\frac{u+N-\beta}{r}\right)\right).
\end{align*}
The last half-open interval has length \(N/r\), so it contains either
\(\lfloor N/r\rfloor\) or \(\lceil N/r\rceil\) integers.  Hence two
factors of length \(N\) contain numbers of this letter differing by at most
one.  Applying this argument to every letter proves balance.

Write \(r_a=\alpha_a\) for the Beatty modulus, or mean spacing, of letter
\(a\).  The following bi-infinite form of Tijdeman's deletion
lemma~\cite[Lemma~3]{Tijdeman2000} makes the endpoint conditions explicit.
Tijdeman proves the broader balanced-word formulation, the rescaling after
deletion, and the structural sharpness of the threshold in
\cite[Lemma~6 and Remarks~2--3]{TijdemanBalanced2000}.

\begin{lemma}[Deletion at rate at most three]\label{lem:deletion}
Let \(W\) be the coding word of a Beatty partition.  If a letter \(a\) has
Beatty rate \(r_a\le3\), then deleting every occurrence of \(a\) and
compressing the remaining positions leaves a balanced word.
\end{lemma}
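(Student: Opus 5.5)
The plan is to relate factors of the compressed word back to factors of \(W\), and to use two facts: each individual letter of \(W\) is balanced, and the non-\(a\) positions are spread very regularly. Fix a letter \(c\neq a\) and two factors \(U',V'\) of the compressed word of the same length \(N\). Each one is the image of a unique minimal factor \(U\), respectively \(V\), of \(W\). This factor starts and ends at non-\(a\) positions, contains exactly \(N\) non-\(a\) letters, and satisfies \(|U|_c=|U'|_c\) and \(|V|_c=|V'|_c\). The goal is \(\bigl||U|_c-|V|_c\bigr|\le1\).

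\emph{Step 1 (lengths of preimages).} The positions of \(a\) form the Beatty set \(S(r_a,\beta_a)\), so gaps between consecutive occurrences of \(a\) take only the values \(\lfloor r_a\rfloor\) and \(\lceil r_a\rceil\). The complement \(\Z\setminus S(r_a,\beta_a)\) is again a Beatty-type set, of modulus \(r_a/(r_a-1)\). I would verify this directly from the half-open interval count used above for balance: the number of \(a\)'s in an interval of length \(L\) is \(\lfloor L/r_a\rfloor\) or \(\lceil L/r_a\rceil\), so the number of non-\(a\)'s is also within one of a linear function of \(L\). Consequently the span of \(N\) consecutive non-\(a\) positions takes at most two consecutive values \(\ell,\ell+1\). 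Hence \(|U|\) and \(|V|\) differ by at most one.

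\emph{Step 2 (easy cases).} If \(|U|=|V|\), balance of \(c\) in \(W\) gives the claim at once. Suppose \(|V|=|U|+1\). Extending \(U\) by one letter gives a factor of length \(|V|\) of \(W\). Hence \(|U|_c\le|V|_c+1\), which is one of the two required inequalities.

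\emph{Step 3 (the main obstacle).} The remaining inequality is \(|V|_c\le|U|_c+1\) when \(|V|=|U|+1\). Here \(V\) contains exactly one more \(a\) than \(U\), and shortening \(V\) by an endpoint can cost a \(c\), so the naive argument only gives a bound of \(2\). This is exactly where \(r_a\le3\) must enter. I would argue by contradiction. Suppose \(|V|_c=|U|_c+2\). Then both endpoints of \(V\) must be \(c\), and removing either endpoint gives a factor of length \(|U|\) with \(|U|_c+1\) copies of \(c\). So both \(V\) minus its first letter and \(V\) minus its last letter are \(c\)-extremal among windows of length \(|U|\), while \(U\) is \(c\)-minimal.

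To exploit this, I would use the exact interval description: the \(c\)-count in a window \([u,u+L)\) is the number of integers in \([(u-\beta_c)/r_c,(u+L-\beta_c)/r_c)\), and similarly for \(a\). With this description, the simultaneous extremality conditions pin down the phases of \(c\) and \(a\) at the two windows. Meanwhile the requirement that \(V\) carry one extra \(a\) while having the same number of non-\(a\) letters forces a pattern of non-\(a\) letters between occurrences of \(a\). I expect this pattern to require a gap of length at least \(4\) between consecutive \(a\)'s. That would contradict \(\lceil r_a\rceil\le3\), since gaps are at most \(3\) and non-\(a\) letters therefore occur in runs of length at most \(2\).

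Making this phase bookkeeping precise is the delicate part. I would first try the concrete local version: list the possible run structures (non-\(a\) runs of length \(0,1,2\), i.e.\ \(a\)-gaps in \(\{1,2,3\}\)) near the two ends of \(V\) and of \(U\). I would then check that every configuration compatible with \(|V|_c=|U|_c+2\) places \(c\) at a distance violating the gap set \(\{\lfloor r_c\rfloor,\lceil r_c\rceil\}\) of \(c\). Tijdeman's Lemma~3 in~\cite{Tijdeman2000} can be consulted for the shape of this case analysis.
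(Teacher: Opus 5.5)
Your Steps~1 and~2, and the opening of Step~3, are correct and coincide with the paper's reduction. In particular, $|V|_c=|U|_c+2$ does force both endpoints of $V$ to be $c$. In Step~1 you still need to state the trimming argument. If two preimage lengths differed by $2$ or more, delete the first letter of the longer preimage together with enough final letters, including its non-$a$ last letter. This leaves a window of length $|U|$ with at least two more $a$'s than $U$, contradicting balance of $a$.

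The genuine gap is the rest of Step~3, which is the entire substance of the lemma. You leave it as an expectation (``phase bookkeeping'', a local check of run structures near the ends of $U$ and $V$) and predict a single $a$-gap of length at least~$4$. That is not how the contradiction arises. $U$ and $V$ can be arbitrarily long and contain arbitrarily many $a$'s, and a single long $a$-gap appears directly only when $|U|_a=0$. In general the obstruction compares a $(k+1)$-step span of $a$-positions with a $k$-step span across the whole window. It therefore needs the multi-step two-valuedness of the Beatty gaps of $a$, not merely that single gaps are at most $3$. A check of endpoint run structures against the single-gap set of $c$ does not capture this.

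You are missing two ingredients.

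\emph{First}, apply your endpoint argument to $U$ as well. Let $x,y$ be the letters of $W$ just before and just after $U$. The factors $xU$ and $Uy$ have length $|V|$ and contain at most $|U|_c+1$ copies of $c$ unless $x=c$ (respectively $y=c$), while $V$ contains $|U|_c+2$. Balance of $c$ therefore forces $x=y=c$, i.e.\ $cUc$ is a factor of $W$.

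\emph{Second}, write $L=|U|$ and $k=|U|_a$, let $s_j$ enumerate the $a$-positions, and use $s_{j+h}-s_j\in\{\lfloor hr_a\rfloor,\lceil hr_a\rceil\}$.
\begin{itemize}
\item The factor $cUc$ has length $L+2$, non-$a$ endpoints and exactly $k$ copies of $a$. The $a$'s bracketing it satisfy $s_{j+k+1}-s_j\ge L+3$, whence $(k+1)r_a>L+2$.
\item The $k+1$ copies of $a$ in $V$ lie in its interior, which has length $L-1$. For $k\ge1$ this gives $\lfloor kr_a\rfloor\le L-2$; for $k=0$ it gives $L\ge2$. In both cases $kr_a<L-1$.
\end{itemize}
Subtracting gives $r_a>3$, contradicting $r_a\le3$. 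Once $cUc$ is in hand, no further information about the phase of $c$ is needed.
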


\begin{proof}
Let \(W'\) be the word obtained from \(W\) by deleting all occurrences of
\(a\).  The increasing sequence of positions occupied by \(a\) has the
form
\[
 s_j=\lfloor jr_a+\theta\rfloor\qquad(j\in\Z)
\]
for a suitable real \(\theta\).  Thus, for every \(h\ge1\),
\begin{equation}\label{eq:beatty-gaps}
 \begin{aligned}
 s_{j+h}-s_j\in\{\lfloor hr_a\rfloor,\lceil hr_a\rceil\}.
 \end{aligned}
\end{equation}
Indeed, \(\lfloor x+y\rfloor-\lfloor x\rfloor\) is always either
\(\lfloor y\rfloor\) or \(\lceil y\rceil\).

\smallskip
\noindent\emph{Minimal lifts.}
Let \(X_1,X_2\) be arbitrary factors of \(W'\) of the same positive
length \(n\).  An occurrence of \(X_i\) corresponds to \(n\) consecutive
retained positions of \(W\).  Let \(V_i\) be the smallest factor of \(W\)
containing those positions.  Its endpoints are retained and therefore are
not \(a\); it contains exactly \(n\) non-\(a\) symbols.  Put
\(k_i=|V_i|_a\).  Then
\begin{equation}\label{eq:minimal-lift}
 |V_i|=n+k_i,
 \qquad |X_i|_b=|V_i|_b\quad(b\ne a).
\end{equation}
After interchanging the factors if necessary, assume \(k_2\ge k_1\) and
put \(d=k_2-k_1\).

\smallskip
\noindent\emph{Comparing deletion counts.}
If \(d=0\), the two lifts have the same length, so balance of \(W\) and
\eqref{eq:minimal-lift} give
\(\bigl||X_2|_b-|X_1|_b\bigr|\le1\) for every \(b\ne a\).

We next rule out \(d\ge2\).  Delete the first symbol and the final
\(d-1\) symbols of \(V_2\), and call the remaining factor \(C\).  Then
\[
 |C|=|V_2|-d=n+k_2-d=n+k_1=|V_1|.
\]
The first deleted symbol and the original last symbol of \(V_2\) are both
non-\(a\).  Therefore at most \(d-2\) of the \(d\) deleted positions are
copies of \(a\), and
\[
 |C|_a\ge k_2-(d-2)=k_1+2.
\]
This contradicts balance for the equal-length factors \(C,V_1\).

\smallskip
\noindent\emph{The exceptional endpoint pattern.}
It remains that \(k_2=k_1+1\).  Write \(k=k_1\) and
\(L=n+k=|V_1|\).  Suppose that a remaining letter \(b\) satisfies
\(\lvert |X_2|_b-|X_1|_b\rvert\ge2\).  Comparing \(V_1\) with either
length-\(L\) factor obtained by deleting one endpoint of \(V_2\) makes the
claim precise.  Write \(V_2=cUe\), where \(c,e\ne a\), and set
\[
 \Delta=|V_2|_b-|V_1|_b,
 \quad \varepsilon_c=\mathbf1_{\{c=b\}},
 \quad \varepsilon_e=\mathbf1_{\{e=b\}}.
\]
Balance gives
\begin{equation}\label{eq:endpoint-comparisons}
 |\Delta-\varepsilon_c|\le1,
 \qquad |\Delta-\varepsilon_e|\le1.
\end{equation}
If \(\Delta\le-2\), either inequality is impossible.  Thus
\(\Delta\ge2\).  Each inequality also gives
\(\Delta\le1+\varepsilon_c\le2\) and
\(\Delta\le1+\varepsilon_e\le2\).  Hence
\[
 \Delta=2,
 \qquad \varepsilon_c=\varepsilon_e=1,
 \qquad V_2=bUb.
\]

Let \(x,y\) be the symbols immediately before and after the chosen
occurrence of \(V_1\).  The factors \(xV_1\) and \(V_1y\) have length
\(L+1=|V_2|\) and at most \(|V_1|_b+1\) copies of \(b\), whereas \(V_2\)
has \(|V_1|_b+2\).  Balance is possible only if \(x=y=b\).  Thus \(W\)
contains the factor \(bV_1b\).

\smallskip
\noindent\emph{Using the rate bound.}
The factor \(bV_1b\) has length \(L+2\), has non-\(a\) endpoints, and
contains exactly \(k\) copies of \(a\).  If \(s_j\) is the last occurrence
of \(a\) before it, then the first occurrence after it is \(s_{j+k+1}\).
All \(L+2\) positions lie strictly between these occurrences, so
\[
 s_{j+k+1}-s_j\ge L+3.
\]
By \eqref{eq:beatty-gaps}, the left side is at most
\(\lceil(k+1)r_a\rceil\); hence
\[
 (k+1)r_a>L+2.
\]
All \(k+1\) copies of \(a\) in \(V_2=bUb\) lie in its interior \(U\),
whose length is \(L-1\).  If \(k\ge1\), the first and last of these copies
are \(k\) \(a\)-steps apart and their positions differ by at most \(L-2\).
Thus \(\lfloor kr_a\rfloor\le L-2\), so \(kr_a<L-1\).  If \(k=0\), the
interior contains the unique copy of \(a\), so \(L-1\ge1\) and the same
strict inequality again holds.  Finally,
\[
 r_a-3=\bigl((k+1)r_a-(L+2)\bigr)
       +\bigl((L-1)-kr_a\bigr)>0.
\]
This contradicts \(r_a\le3\).  No letter \(b\ne a\) can have a discrepancy
of two, and therefore \(W'\) is balanced.
\end{proof}

Thus a component of density at least \(1/3\) can be deleted while preserving
balance.  It remains to produce one.  We now assume the strict opposite,
\(p_i<q/3\) for every \(i\), and derive an impossible Fourier row system.

\section{Fourier reduction to subcritical rigidity}\label{sec:fourier}

Assume that every primitive numerator satisfies \(p_i<q/3\).  At a frequency
coming from the minimum-gcd layer, the explicit rational Beatty transform
receives contributions only from that layer.  The covering identity and the
triangle inequality then force every normalized inverse-sine row to have
mass at least one.  Theorem~\ref{thm:SC} says that the same subcritical data
must contain a deficient row, giving the desired contradiction.

The transform formula and the minimum-gcd mechanism are due to Graham and
O'Bryant~\cite[Theorem~1.1 and Section~4.2, especially
Lemma~4.5]{GrahamOBryant2005}.  Our task here is to isolate the exact finite
row system to which the new rigidity theorem will apply.

For integers \(Q\ge2\) and \(d\) with \(1\le d\le Q/2\), define
\begin{equation}\label{eq:kernel-def}
 K_Q(d)=\frac{\sin(\pi/Q)}{\sin(\pi d/Q)}.
\end{equation}
If \(x\not\equiv0\pmod Q\), let \(r\in\{1,\ldots,Q-1\}\) be its least
positive residue and define
\[
 \normQ{x}=\min\{r,Q-r\}.
\]
Thus \(1\le\normQ{x}\le Q/2\); throughout the paper this is the cyclic
distance of \(x\) from zero.

We first record the covering identity.  Let
\(f_i:\Z/q\Z\to\{0,1\}\) be the indicator
of the \(i\)-th Beatty set in one period, and define
\(\widehat f_i:\Z/q\Z\to\mathbb C\) by
\[
 \widehat f_i(j)=\sum_{t=0}^{q-1}f_i(t)e^{-2\pi\ii jt/q},
 \qquad j\in\{0,\ldots,q-1\},
\]
where \(j,t\) denote the indicated integer representatives of their
residue classes; residue-class brackets are suppressed.
The sets form a partition precisely when
\(\sum_i f_i(t)=1\) for every \(t\in\Z/q\Z\).  Taking Fourier transforms
gives
\[
 \sum_i\widehat f_i(0)=q,
 \qquad
 \sum_i\widehat f_i(j)=0\quad(1\le j<q),
\]
because the Fourier transform of the constant function \(1\) vanishes at
every nonzero frequency.

The Fourier transform of a rational Beatty word was computed explicitly by
Graham and O'Bryant \cite[Theorem~1.1]{GrahamOBryant2005}.  In the present
normalization, its value and support take the following form.  If
\(g_i=(p_i,q)\), \(P_i=p_i/g_i\), and \(Q_i=q/g_i\), choose \(u_i\) by
\[
 1\le u_i<Q_i,\qquad u_i\equiv P_i^{-1}\pmod{Q_i}.
\]
Then
\begin{equation}\label{eq:inverse-lift}
 p_i u_i\equiv g_i\pmod q.
\end{equation}
Indeed, multiply \(P_i u_i=1+tQ_i\) by \(g_i\).
Put \(\omega=e^{2\pi\ii/q}\).  At every nonzero frequency \(j\), the
cited formula is
\begin{equation}\label{eq:fourier-exact}
 \widehat f_i(j)=
 \begin{cases}
 \displaystyle
 g_i\frac{1-\omega^j}{1-\omega^{ju_i}}\,\omega^{-jr_i},
   &g_i\mid j,\\[6pt]
 0,&g_i\nmid j,
 \end{cases}
\end{equation}
where \(r_i\) is the integer shift of the \(i\)-th Beatty set.  If
\(z=\omega^j\), this is equivalently
\begin{equation}\label{eq:fourier-J}
 \widehat f_i(j)=
 \begin{cases}
 \displaystyle\frac{g_i z^{-r_i}}{J_{u_i}(z)},
   &\operatorname{ord}(z)\mid Q_i,\\[6pt]
 0,&\operatorname{ord}(z)\nmid Q_i,
 \end{cases}
 \qquad J_u(z)=1+z+\cdots+z^{u-1}.
\end{equation}
Indeed, \(g_i\mid j\) is equivalent to
\(\operatorname{ord}(z)\mid q/g_i=Q_i\).  In the active case, if
\(d=\operatorname{ord}(z)>1\), the facts \(d\mid Q_i\) and
\((u_i,Q_i)=1\) imply \(z^{u_i}\ne1\), so
\(J_{u_i}(z)=(1-z^{u_i})/(1-z)\ne0\).  Thus the denominator appearing
in the active case of \eqref{eq:fourier-J} is legitimate.

For a primitive rational Beatty partition with pairwise distinct moduli and
data \eqref{eq:primitive}, put
\[
 g_i=(p_i,q),\qquad g=\min_i g_i,\qquad Q=q/g,
\]
and retain the indices with \(g_i=g\).  Write \(p_i=ga_i\) on this layer.
Then the \(a_i\) are distinct units modulo \(Q\).

\begin{proposition}[Minimum-gcd Fourier layer]\label{prop:minimum-layer}
Let \((\B_i)_{i=1}^m\) be a primitive rational Beatty partition with
pairwise distinct moduli and common-denominator data \(p_i,q\).  Suppose that
\(3p_i<q\) for every \(i\).  The minimum layer
\(A=\{a_1,\ldots,a_r\}\subset\U\) satisfies
\begin{equation}\label{eq:subcritical-data}
 3a<Q\quad(a\in A),\qquad \sum_{a\in A}a\le Q.
\end{equation}
Moreover, every \(a_k\in A\) satisfies the necessary row inequality
\begin{equation}\label{eq:row-condition}
 1\le M_k:=\sum_{i\ne k}
 K_Q\!\left(\normQ{a_ka_i^{-1}}\right).
\end{equation}
\end{proposition}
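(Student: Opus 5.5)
The plan is to get the two conditions in \eqref{eq:subcritical-data} by dividing the primitive data by \(g\), and to get \eqref{eq:row-condition} by evaluating the covering identity at the frequency \(j=ga_k\), where only the minimum layer contributes.

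\emph{Elementary properties of \(A\).} For an index with \(g_i=g\), write \(p_i=ga_i\). Then \(\gcd(a_i,Q)=\gcd(p_i,q)/g=1\), so \(a_i\) is a unit modulo \(Q\). Distinctness of the \(a_i\) follows from distinctness of the \(p_i\), established in Section~\ref{sec:normalization}. Dividing \(3p_i<q\) by \(g\) gives \(3a_i<Q\). Since every \(p_i\) is positive, summing only over the layer gives \(\sum_{a\in A} ga\le\sum_i p_i=q\), hence \(\sum_{a\in A}a\le Q\). In particular \(Q>3\), so \(Q\ge 2\) and the kernel \(K_Q\) is defined.

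\emph{Which components are active.} Fix \(a_k\in A\) and take \(j=ga_k\), so that \(1\le j<q\) and \(z=\omega^j=e^{2\pi\ii a_k/Q}\). Because \(a_k\) is a unit, \(\operatorname{ord}(z)=Q\). By \eqref{eq:fourier-J}, component \(i\) is active exactly when \(Q\mid Q_i=q/g_i\), that is, when \(g_i\mid g\). Minimality of \(g\) turns this into \(g_i=g\). So at this frequency the covering identity \(\sum_i\widehat f_i(j)=0\) involves only the layer indices.

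\emph{Computing the magnitudes.} For a layer index \(i\) we have \(Q_i=Q\) and \(u_i\equiv a_i^{-1}\pmod Q\). By \eqref{eq:fourier-exact},
\[
|\widehat f_i(j)|=g\,\frac{|1-z|}{|1-z^{u_i}|}
=g\,\frac{\sin(\pi\normQ{a_k}/Q)}{\sin(\pi\normQ{a_ka_i^{-1}}/Q)}.
\]
Here \(a_ka_i^{-1}\not\equiv0\pmod Q\) because it is a unit. For \(i=k\) the denominator becomes \(\sin(\pi/Q)\). Hence the ratio \(|\widehat f_i(j)|/|\widehat f_k(j)|\) equals \(K_Q(\normQ{a_ka_i^{-1}})\).

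\emph{The row inequality.} Apply the triangle inequality to \(\widehat f_k(j)=-\sum_{i\ne k}\widehat f_i(j)\) and divide by \(|\widehat f_k(j)|\ne0\). This gives exactly \(1\le M_k\). It also shows the layer has at least two elements, since otherwise the sum could not vanish.

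The only delicate point is the activity bookkeeping: one must check that no component outside the minimum layer survives at \(j=ga_k\). This is precisely where minimality of \(g\) and \(\operatorname{ord}(z)=Q\) are used; the rest is direct substitution into the Graham--O'Bryant formula.
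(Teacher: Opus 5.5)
Your proof is correct and follows the paper's argument essentially step for step. The two conditions in \eqref{eq:subcritical-data} come from dividing the primitive data by \(g\), and the frequency \(j=ga_k\) isolates the minimum layer; your criterion \(g_i\mid g\) is equivalent to the paper's \(Q\mid Q_i\) together with \(Q_i\le Q\). The magnitude ratio and the triangle inequality then give \(M_k\ge1\), and working directly with \eqref{eq:fourier-exact} instead of the \(J_u\) form is only a cosmetic difference.
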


\begin{proof}
The arithmetic assertions follow from
\((p_i/g,q/g)=1\), distinctness of the \(p_i\), and
\[
 g\sum_{a\in A}a=\sum_{g_i=g}p_i\le q=gQ.
\]

\smallskip
\noindent\emph{Isolation of the minimum layer.}
Fix \(k\) in the minimum layer and take the nonzero frequency
\(j=p_k=ga_k\).  The associated root is
\[
 z=e^{2\pi\ii p_k/q}=e^{2\pi\ii a_k/Q}.
\]
Because \((a_k,Q)=1\), the order of \(z\) is exactly \(Q\).  By
\eqref{eq:fourier-J}, the \(i\)-th word is active precisely when this order
divides \(Q_i=q/g_i\), that is, when \(Q\mid Q_i\).  Minimality of \(g\)
gives \(g_i\ge g\), and hence \(Q_i\le Q\).  Therefore
\[
 Q\mid Q_i\text{ and }Q_i\le Q
 \quad\Longleftrightarrow\quad Q_i=Q
 \quad\Longleftrightarrow\quad g_i=g.
\]
Thus only the minimum-gcd layer contributes at this frequency.

\smallskip
\noindent\emph{Normalization of the active coefficients.}
On this layer, let \(u_i\equiv a_i^{-1}\pmod Q\).  The elementary
geometric-sum identity gives
\[
|J_{u_i}(z)|
=\left|\frac{1-z^{u_i}}{1-z}\right|
=\frac{|\sin(\pi a_k u_i/Q)|}{|\sin(\pi a_k/Q)|}.
\]
If \(d_{ki}=\normQ{a_ka_i^{-1}}\), then periodicity and reflection of
the sine function imply
\[
 |\sin(\pi a_ku_i/Q)|=\sin(\pi d_{ki}/Q).
\]
For \(i=k\), the congruence \(a_ku_k\equiv1\pmod Q\) similarly gives
\[
 |J_{u_k}(z)|
 =\frac{\sin(\pi/Q)}{|\sin(\pi a_k/Q)|}.
\]
The common factor \(g\) in \eqref{eq:fourier-J} cancels, and every
translation contributes only a complex number of absolute value one.
Consequently
\[
 \frac{|\widehat f_i(p_k)|}{|\widehat f_k(p_k)|}
 =\frac{|J_{u_k}(z)|}{|J_{u_i}(z)|}
 =\frac{\sin(\pi/Q)}{\sin(\pi d_{ki}/Q)}
 =K_Q(d_{ki}).
\]
Finally, the nonzero-frequency covering identity is
\[
 0=\sum_{i:g_i=g}\widehat f_i(p_k).
\]
Isolating the \(k\)-th term and applying the triangle inequality gives
\[
 |\widehat f_k(p_k)|
 \le\sum_{\substack{i\ne k\\g_i=g}}|\widehat f_i(p_k)|.
\]
The \(k\)-th coefficient is nonzero by its displayed formula, so division
by its absolute value proves \eqref{eq:row-condition}.
\end{proof}

For the rest of the Fourier argument, the \emph{mass} of the row indexed by
\(a\in A\) is
\[
 M_a=\sum_{b\in A\setminus\{a\}}
 K_Q\!\left(\normQ{ab^{-1}}\right).
\]
We say that this row \emph{passes} if \(M_a\ge1\), and that it is
\emph{deficient} if \(M_a<1\).  Proposition~\ref{prop:minimum-layer}
says that every row arising from a hypothetical subcritical Beatty
partition must pass.

The required contradiction is the following dimension-free statement.

\begin{theorem}[Subcritical inverse-sine rigidity]\label{thm:SC}
Let \(Q\ge4\), and let \(A\subset\U\) be a nonempty set represented by
distinct positive integers satisfying \eqref{eq:subcritical-data}.  Then some \(a\in A\)
has
\begin{equation}\label{eq:SC}
 \sum_{b\in A\setminus\{a\}}
 K_Q\!\left(\normQ{ab^{-1}}\right)<1.
\end{equation}
\end{theorem}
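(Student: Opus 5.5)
The plan is to follow the architecture announced in the introduction and study the row of the smallest element \(s=\min A\), together with the rows of the elements it interacts with most strongly. Write \(n=|A|-1\). The basic facts about the kernel are these: \(K_Q(d)\) is decreasing in \(d\), \(K_Q(1)=1\), and, from concavity of sine on \([0,\pi/2]\),
\[
 K_Q(d)\le \frac{Q}{2d}\sin(\pi/Q)\le\frac{\pi}{2d}\qquad(1\le d\le Q/2).
\]
So a row has mass at least one only if it contains small cyclic distances \(\normQ{ab^{-1}}\). Note that \(\normQ{ab^{-1}}=1\) means \(a\equiv\pm b\pmod Q\). Both are impossible: \(a=b\) is excluded, and \(a+b\equiv0\) would force \(a+b\ge Q\), contradicting \(3a,3b<Q\). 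Hence every entry of every row is at most \(K_Q(2)=1/(2\cos(\pi/Q))\), which is just above \(1/2\). This already disposes of \(n\le1\), and it shows that a passing row needs at least two fairly large entries.

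The first real step is the \emph{slack identity} for the row of \(s\). An entry \(\normQ{sb^{-1}}=d\) means \(s\equiv \pm d b\pmod Q\). Equivalently, \(db\mp s\) is divisible by \(Q\); writing \(db = tQ\pm s\), we get that \(db\) lies in \(tQ\pm s\).
\begin{itemize}
\item For \(t=1\) this says \(b\) divides \(Q\mp s\), with quotient \(d\).
\item For \(t\ge2\) we get \(d\ge (2Q-s)/b\), which is large, because \(b<Q/3\) forces \(d> 3(2Q-s)/Q\cdot\frac{1}{1}\)-scale values, so these terms are small.
\end{itemize}
Using \(\sum_{b}b\le Q-s\) and \(K_Q(d)\le \pi/(2d)\), I would prove an exact bookkeeping identity of the form
\[
 M_s=\sum_{b\mid Q\pm s}K_Q\!\left(\tfrac{Q\pm s}{b}\right)+(\text{error}).
\]
Here the error is bounded by a constant times \(\sum b/Q\) over the \(b\) that are not divisors, weighted by their \(t\). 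The consequence is that if \(M_s\ge1\), then most of the numerator weight of \(A\setminus\{s\}\) sits on divisors of \(Q-s\) and \(Q+s\). In the extremal configuration these divisors are \(b\approx (Q\pm s)/2\) and \((Q\pm s)/3\), but \(b<Q/3\) forbids \(d=2\) and nearly forbids \(d=3\). So each heavy divisor-term is at most about \(K_Q(3)\approx 1/3\), and at least three near-extremal divisors are needed. Their total numerator weight then already comes close to \(Q\).

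Next I split into cases on \(s\).
\begin{itemize}
\item \emph{Case \(s>1\).} Let \(b_0\) be the least element of \(A\) that is a divisor of \(Q\pm s\). I would look at the row of \(b_0\) or of the largest element. Because the elements concentrate on the few divisors \((Q\pm s)/d\) with small \(d\), the pairwise ratios \(b b'^{-1}\) are forced to be residues of the shape \(d'/d\) modulo \(Q\). Their cyclic distances are bounded below by a fixed gap of order \(Q/(dd')\). This should make another row's mass at most a fixed constant below one, except for \(Q\) and \(n\) in an explicitly bounded range.
\item \emph{Case \(s=1\).} Here \(Q\pm1\) may have divisors in a dyadic chain \(1,2,4,\dots\), which mimics the binary pattern; the gap can vanish along such a chain. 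I would replace the gap argument by propagation. If \(b\) is heavy in the row of \(1\), then \(b\) or \(2b\) must be heavy in the row of \(b\), and so on up the chain. Summing the harmonic tails \(\sum K_Q(2^j)\) shows that some row, namely the top of the chain, loses mass and cannot reach one. The constraint \(3a<Q\) excludes the top element \(\approx Q/2\) that would complete the binary configuration.
\end{itemize}

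The uniform estimates leave finitely many \((Q,A)\) with small \(Q\) or small \(n\). I would handle these by exact verification using rational upper bounds for \(\sin\), such as Taylor bounds with rational endpoints. The main obstacle is the \(s=1\) dyadic case: near-binary sets are genuinely close to passing, so the propagation estimate has to be sharp and not merely of constant order.
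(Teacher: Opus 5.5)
Your outline follows the paper's architecture: the minimum row, divisors of $Q\pm s$, a fixed gap when $s>1$, dyadic chains when $s=1$, and an exact finite closure. However, the quantitative core is missing, and the tool you name cannot supply it.

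\textbf{The concentration step.} Good elements have products $b\,\normQ{sb^{-1}}\in\{Q-s,Q+s\}$, and bad ones have products $\ge 2Q-s$. So from $K_Q(d)\le\pi/(2d)$ a passing $s$-row yields only $2/\pi\le G/(Q-s)+W/(2Q-s)$, where $G,W$ are the good and bad weights. This still allows roughly three quarters of the weight to be bad, so no concentration follows. The paper needs two sharp facts. The first is the exact slack identity: writing $d_a a=(Q-s)+\delta_a$ with $\delta_a\in\{0,2s\}\cup[Q,\infty)$ gives $1-\sum_{a\ne s}1/d_a=(Q-\sum A)/(Q-s)+\sum_{a\ne s}\delta_a/(d_a(Q-s))\ge0$. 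The second is the envelope $1/d\le K_Q(d)<1/d+(2\pi-4)d/Q^2$, which makes the total kernel error of a row $O(n/Q)$. Only together do they force the unused budget below $8n/7$ and the bad weight below $16n/7$. That is the actual content of ``almost all weight sits on divisors.'' Your near-extremal-divisor heuristic is no substitute: heavy entries need not have $d\approx3$, and what matters is near-equality in $\sum_{a\ne s}1/d_a\le1$.

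\textbf{The case analysis.} It also lacks its decisive mechanisms.

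For $s>1$, the gap comes from the least good element $x$. Since $s>1$, no selected element divides $x$: divisors of good elements are good, and $s\mid x$ would force $s\mid Q$. Hence every product $b\,\normQ{xb^{-1}}$ is at least $Q-x$. A good $b$ can lie in the first shell $\{Q\pm x\}$ only if $b\mid x\pm s$, i.e.\ $b=x+s$. All other good weight has products $\ge 2Q-x$, which gives $1-M_x>1/6-16n/(7Q)$. Your ratio heuristic $\normQ{bb'^{-1}}\gtrsim Q/(dd')$ does not isolate this missing-divisor effect and yields no fixed gap.

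For $s=1$, ``$b$ or $2b$ is heavy in the $b$-row'' is not an argument. An all-dyadic good set is excluded by the budget, $1+2+\cdots+2^k<2Q/3$ against $R+W=O(n)$, not by the top of the chain losing mass. The genuinely hard case is a nondyadic good element $y$ all of whose smaller good elements are dyadic. It needs the harmonic envelope for the $y$-row, plus the separate branches $x\ge3$ and $x=2$, $3\in A$. Your sketch never addresses nondyadic good elements.

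\textbf{The finite closure.} The leftover cases must be bounded in $Q$, obtained from $Q\lesssim cn$ against $Q\ge\sum A\gtrsim n^2/2$; ``small $n$'' alone leaves infinitely many cases. The resulting ranges are $Q\le301$ and $Q\le715$, with the critical branch first cut from $5607$ to $289$ by an exact rational envelope. These are far beyond enumerating the sets $A$; they require a monotone knapsack or branch-and-bound certificate with upward-rounded rational weights.
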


This is a subcritical form of the inverse-sine rigidity principle proposed by
Graham and O'Bryant~\cite[Conjecture~5.2]{GrahamOBryant2005}.  Within the
regime \(3a<Q\), it gives the deficient-row conclusion needed here without
the cardinality-dependent hypothesis \(Q>(7/4)^{|A|}\).

\begin{corollary}\label{cor:one-third}
Theorem~\ref{thm:SC} implies Theorem~\ref{thm:one-third-main}.
\end{corollary}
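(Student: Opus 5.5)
The argument is by contradiction, with Proposition~\ref{prop:minimum-layer} as the bridge. Suppose the hypotheses of Theorem~\ref{thm:main} hold but \(\max_i 1/\alpha_i<1/3\). First apply the normalization of Section~\ref{sec:normalization}. Graham's corollary makes every modulus rational, and the shift calculation turns each set into some \(\B^q_{p_i,r_i}\). Dividing by \(\gcd(q,p_1,\ldots,p_m)\) yields primitive data \eqref{eq:primitive} with pairwise distinct numerators and the same sets. Since \(1/\alpha_i=p_i/q\), the contrary assumption becomes \(3p_i<q\) for every \(i\). This is exactly the hypothesis of Proposition~\ref{prop:minimum-layer}.

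Next, form the minimum layer: \(g=\min_i g_i\), \(Q=q/g\), and \(A=\{a_i: p_i=ga_i,\ g_i=g\}\). The set \(A\) is nonempty because the minimum is attained. Its elements are distinct positive integers, since the \(p_i\) are distinct, and they are units modulo \(Q\). Proposition~\ref{prop:minimum-layer} shows that \(A\) satisfies \eqref{eq:subcritical-data} and that every row satisfies \(M_a\ge1\) by \eqref{eq:row-condition}. If \(Q\ge4\), Theorem~\ref{thm:SC} produces some \(a\in A\) with \(M_a<1\), which is a contradiction. The definitions agree exactly: the row masses in \eqref{eq:row-condition} and \eqref{eq:SC} are the same sums of \(K_Q(\normQ{ab^{-1}})\) over \(b\in A\setminus\{a\}\).

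The only real point to check is the side condition \(Q\ge4\) required by Theorem~\ref{thm:SC}. Every \(a\in A\) satisfies \(1\le a\) and \(3a<Q\), so \(Q>3\), hence \(Q\ge4\). I expect no other obstacle.

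One degenerate case should be noted. If \(|A|=1\), the row sum is empty, so \(M_a=0<1\), and this already contradicts \eqref{eq:row-condition}. Theorem~\ref{thm:SC} covers this case as well, so no separate treatment is needed, though it confirms the logic is consistent.

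The contradiction shows \(\max_i p_i\ge q/3\), which is \(\max_i 1/\alpha_i\ge 1/3\), proving Theorem~\ref{thm:one-third-main}.
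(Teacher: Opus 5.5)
Your proposal is correct and follows the paper's proof essentially step for step. You pass to primitive data, read the negated bound as \(3p_i<q\), and invoke Proposition~\ref{prop:minimum-layer} to obtain a nonempty all-passing layer. You then get \(Q\ge4\) from \(1\le a<Q/3\) and contradict the passing rows with Theorem~\ref{thm:SC}.
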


\begin{proof}
By Section~\ref{sec:normalization}, the partition admits primitive data
\eqref{eq:primitive}.  In this notation, the negation of
Theorem~\ref{thm:one-third-main} is \(p_i<q/3\) for every \(i\).
If every \(p_i<q/3\), Proposition~\ref{prop:minimum-layer} constructs a
nonempty set \(A\).  Since every representative satisfies \(a\ge1\) and
\(3a<Q\), necessarily \(Q\ge4\).  Every row of this set is at least one,
whereas
Theorem~\ref{thm:SC} gives a deficient row.  This contradiction proves
\(\max_i p_i\ge q/3\).
\end{proof}

The original shifts and Beatty sequences now leave the argument.  Until
Section~\ref{sec:induction}, we work only with the finite data \((Q,A)\) and
prove that every subcritical pair has a deficient row.

\section{Kernel and minimum-row slack}\label{sec:slack}

The row system is trigonometric and, as stated, has no visible divisor
structure.  We introduce two arithmetic controls.  The kernel is bounded
above by \(1/d\) plus a strict rational error; this one-sided estimate later
supports exact verification.  The \(1/d\)-mass of the smallest row satisfies
an exact slack identity.

The consequence is concrete.  If the smallest row passes, almost all
selected numerator weight lies on divisors of \(Q-s\) or \(Q+s\).  We call
these elements good; the remaining bad weight is small.  Product shells then
turn divisibility and the numerator budget into bounds for other rows.  The
split between \(s>1\) and \(s=1\) is exactly the split between a missing
predecessor that creates a fixed gap and a possible dyadic predecessor chain.

\begin{proposition}[Inverse-sine envelope]\label{prop:kernel}
For integers \(Q\ge2\) and \(d\) with \(1\le d\le Q/2\), put
\(C_0=2\pi-4\).  Then
\begin{equation}\label{eq:kernel-envelope}
 \frac1d\le K_Q(d)
 <\frac1d+C_0\frac d{Q^2}
 <\frac1d+\frac{16d}{7Q^2}.
\end{equation}
\end{proposition}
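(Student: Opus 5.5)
Write \(x=\pi/Q\) and \(\theta=\pi d/Q\in(0,\pi/2]\), so that \(K_Q(d)=\sin x/\sin(dx)\) with \(x\le\theta\le\pi/2\). The plan is to prove the lower and upper bounds separately, using only concavity of sine on \([0,\pi]\) and a single-variable estimate for \(1/\sin\); the last inequality in \eqref{eq:kernel-envelope} is pure arithmetic.

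\emph{Lower bound.} We need \(d\sin x\ge\sin(dx)\). This is subadditivity of sine on \([0,\pi]\), which follows from concavity and \(\sin0=0\). Equivalently, induct with \(\sin((k+1)x)=\sin(kx)\cos x+\cos(kx)\sin x\le\sin(kx)+\sin x\). Either way this step is routine.

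\emph{Upper bound.} Write
\[
 K_Q(d)=\frac{\sin x}{\sin\theta}\le\frac{x}{\sin\theta}.
\]
We want to compare \(1/\sin\theta\) with \(1/\theta\) through
\[
 \phi(\theta)=\frac1{\sin\theta}-\frac1\theta .
\]
On \((0,\pi/2]\), \(\phi\) is increasing and convex-like, and the ratio \(\phi(\theta)/\theta\) is increasing. Its maximum is therefore attained at \(\theta=\pi/2\), where
\[
 \frac{\phi(\pi/2)}{\pi/2}=\frac{2}{\pi}\Bigl(1-\frac2\pi\Bigr)=\frac{2\pi-4}{\pi^2}.
\]
Hence \(1/\sin\theta\le 1/\theta+\frac{C_0}{\pi^2}\,\theta\). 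Multiplying by \(x\) gives
\[
 K_Q(d)\le\frac{x}{\theta}+\frac{C_0}{\pi^2}\,x\theta=\frac1d+C_0\frac{d}{Q^2}.
\]
Strictness should come from \(\sin x<x\), since \(x>0\). That strict inequality \(K_Q(d)<x/\sin\theta\) propagates to the final bound.

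\emph{Main obstacle.} The main step is showing that \(\psi(\theta)=\phi(\theta)/\theta=1/(\theta\sin\theta)-1/\theta^2\) is increasing on \((0,\pi/2]\). I would first try the power series
\[
 \frac1{\sin\theta}=\frac1\theta+\sum_{k\ge1}c_k\theta^{2k-1},
\]
whose coefficients \(c_k\) are all positive. This gives \(\psi(\theta)=\sum_k c_k\theta^{2k-2}\), which is increasing in \(\theta\) and converges for \(|\theta|<\pi\). If that feels too citation-dependent, the fallback is to differentiate \(\psi\) directly. The sign condition then reduces to an elementary inequality of the form \(2\sin^2\theta\ge\theta\sin\theta+\theta^2\cos\theta\)-type, to be checked by Taylor bounds on \([0,\pi/2]\).

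\emph{Final comparison.} It remains to show \(2\pi-4<16/7\), that is, \(\pi<25/7\approx3.571\). This holds since \(\pi<22/7<25/7\).
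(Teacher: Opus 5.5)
Your proof is correct in substance and follows the paper's route, but the final arithmetic step states a false equivalence.

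\textbf{Comparison with the paper.} Your \(\psi(\theta)=\bigl(\theta/\sin\theta-1\bigr)/\theta^2\) is exactly the function \(F\) used in the paper. Its monotonicity comes from positivity of the Taylor coefficients of \(\theta/\sin\theta\), its maximum on \((0,\pi/2]\) is taken at \(\pi/2\), and strictness comes from \(\sin(\pi/Q)<\pi/Q\), all as in the paper. The one difference is the lower bound. You use subadditivity \(\sin(dx)\le d\sin x\), which is valid since every \(kx\le\pi/2\). The paper instead uses that \(\sin t/t\) decreases on \((0,\pi)\). Either works here.

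\textbf{Positivity of the coefficients.} You only assert this. It is worth adopting the paper's one-line justification: Euler's product \(\theta/\sin\theta=\prod_{m\ge1}\bigl(1-\theta^2/(m^2\pi^2)\bigr)^{-1}\) writes the function as a product of geometric series in \(\theta^2\) with positive coefficients. That makes your derivative-and-Taylor fallback unnecessary.

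\textbf{The arithmetic error.} The inequality \(2\pi-4<16/7\) is equivalent to \(2\pi<44/7\), that is \(\pi<22/7\), not \(\pi<25/7\). This matters because the comparison has almost no slack: \(2\pi-4\approx2.2832\) against \(16/7\approx2.2857\). Knowing only \(\pi<25/7\) would not suffice, since any number \(p\) with \(22/7\le p<25/7\) has \(2p-4\ge16/7\). Your chain ``\(\pi<22/7<25/7\)'' happens to pass through the correct fact. The fix is to state the equivalence correctly and invoke Archimedes' bound \(\pi<22/7\) directly, as the paper does. You should not regard the last step as having room to spare.
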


\begin{proof}
For \(0<x<\pi\), set
\[
 F(x)=\frac{x/\sin x-1}{x^2}.
\]
Euler's product gives, for \(|x|<\pi\),
\[
 \frac{x}{\sin x}
 =\prod_{m=1}^{\infty}
  \left(1-\frac{x^2}{m^2\pi^2}\right)^{-1}
 =1+c_1x^2+c_2x^4+\cdots,
\]
where every coefficient \(c_j\) is positive.  Therefore
\[
 F(x)=c_1+c_2x^2+c_3x^4+\cdots
\]
is strictly increasing for \(0<x<\pi\).  Moreover,
\[
 F(\pi/2)=\frac{\pi/2-1}{\pi^2/4}
          =\frac{2\pi-4}{\pi^2}.
\]
Consequently, for \(0<x\le\pi/2\),
\[
 \frac{x}{\sin x}\le1+F(\pi/2)x^2
 =1+\frac{2\pi-4}{\pi^2}x^2.
\]

For the lower bound, note that \(\sin t/t\) decreases on
\((0,\pi)\).  Indeed,
\[
 \frac{d}{dt}\left(\frac{\sin t}{t}\right)
 =\frac{t\cos t-\sin t}{t^2}<0,
\]
because
\[
 \sin t-t\cos t=\int_0^t u\sin u\,du>0.
\]
Take \(x=\pi d/Q\) and \(y=\pi/Q\).  Since \(y\le x\), monotonicity
gives \(\sin y/y\ge\sin x/x\), and hence
\[
 K_Q(d)=\frac{\sin y}{\sin x}\ge\frac1d.
\]
For the upper bound, use
\[
 K_Q(d)=\frac{\sin y}{y}\frac1d\frac{x}{\sin x}
\]
and \(\sin y/y<1\) to obtain
\[
 K_Q(d)<\frac1d\left(1+(2\pi-4)\frac{d^2}{Q^2}\right).
\]
The rational comparison
\(2\pi-4<16/7\) follows from \(\pi<22/7\).
\end{proof}

The useful row is the one indexed by the smallest element.  Let
\(s=\min A\), put \(N=Q-s\), and, for each
\(a\in A\setminus\{s\}\), define
\[
 d_a=\normQ{sa^{-1}},\qquad \delta_a=d_aa-N.
\]

\begin{lemma}[Minimum-row slack identity]\label{lem:slack}
For every \(a\in A\setminus\{s\}\),
\(d_aa\ge N\),
\begin{equation}\label{eq:slack}
 1-\sum_{a\ne s}\frac1{d_a}
 =\frac{Q-\sum_{a\in A}a}{N}
 +\sum_{a\ne s}\frac{\delta_a}{d_aN},
\end{equation}
and
\begin{equation}\label{eq:discrete-gap}
 \delta_a\in\{jQ:j\ge0\}\cup\{2s+jQ:j\ge0\}
 \subset\{0,2s\}\cup[Q,\infty).
\end{equation}
In particular, \(\sum_{a\ne s}d_a^{-1}\le1\).
\end{lemma}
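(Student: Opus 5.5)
The plan is a direct computation that rests on a single congruence. Everything follows from rewriting \(1/d_a\) as \(a/(d_aa)\) and identifying \(d_aa\) modulo \(Q\).

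\emph{Step 1: the congruence.} Fix \(a\in A\setminus\{s\}\). By definition of \(\normQ{\cdot}\), we have \(sa^{-1}\equiv\pm d_a\pmod Q\). Multiplying by the unit \(a\) gives
\[
 d_aa\equiv\pm s\pmod Q .
\]
Since \(d_aa\) is a positive integer, exactly one of two cases holds.

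In the first case, \(d_aa\equiv -s\). Then \(d_aa=(Q-s)+jQ=N+jQ\) with \(j\ge0\), because \(Q-s\) is the least positive integer in that residue class (recall \(0<s<Q\)). Hence \(\delta_a=jQ\).

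In the second case, \(d_aa\equiv s\). Here \(d_aa\ne s\), because \(d_a\ge1\) and \(a>s\) (the elements of \(A\) are distinct positive integers and \(s=\min A\)). So \(d_aa=s+j'Q\) with \(j'\ge1\). Rewriting,
\[
 d_aa=N+2s+(j'-1)Q ,
\]
so \(\delta_a=2s+jQ\) with \(j\ge0\).

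In both cases \(\delta_a\ge0\), that is, \(d_aa\ge N\). The inclusion in \eqref{eq:discrete-gap} follows because \(3s<Q\) by \eqref{eq:subcritical-data}. This gives \(2s<Q\), so every value of the second family with \(j\ge1\) lies in \([Q,\infty)\).

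\emph{Step 2: the identity.} Using \(d_aa=N+\delta_a\), compute
\[
 \frac aN-\frac1{d_a}=\frac{a}{N}-\frac{a}{N+\delta_a}=\frac{a\delta_a}{N\,d_aa}=\frac{\delta_a}{d_aN}.
\]
Summing over \(a\ne s\) gives
\[
 1-\sum_{a\ne s}\frac1{d_a}=1-\frac{\sum_{a\in A}a-s}{N}+\sum_{a\ne s}\frac{\delta_a}{d_aN}.
\]
Since \(N=Q-s\), we have \(1-(\sum_{a\in A}a-s)/N=(Q-\sum_{a\in A}a)/N\), which is \eqref{eq:slack}.

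\emph{Step 3: the final inequality.} Both terms on the right of \eqref{eq:slack} are nonnegative. The first is nonnegative because \(\sum_{a\in A}a\le Q\) by \eqref{eq:subcritical-data}, and the second because \(\delta_a\ge0\) by Step 1. Hence \(\sum_{a\ne s}d_a^{-1}\le1\).

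There is no real obstacle here. The only point needing care is excluding \(d_aa=s\) in the second case, which is what forces the gap value \(2s\) rather than \(0\), and this is exactly where minimality of \(s\) and distinctness of the representatives are used.
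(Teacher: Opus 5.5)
Your proof is correct and follows the paper's argument essentially line for line: reduce \(\normQ{sa^{-1}}\) to the congruence \(d_aa\equiv\pm s\pmod Q\), exclude \(d_aa=s\) using \(a>s\), and sum the rewriting \(1/d_a=a/N-\delta_a/(d_aN)\) over \(a\ne s\). Your appeal to \(3s<Q\) for the inclusion is harmless but unnecessary, since \(2s+jQ\ge Q\) holds for every \(j\ge1\) regardless of the size of \(s\).
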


\begin{proof}
By the definition of \(d_a\), one of
\[
 d_a\equiv sa^{-1}\pmod Q,
 \qquad
 d_a\equiv-sa^{-1}\pmod Q
\]
holds.  Multiplication by \(a\) gives
\(d_aa\equiv s\pmod Q\) or \(d_aa\equiv-s\pmod Q\).  In the second
case positivity gives
\[
 d_aa=Q-s+jQ=N+jQ\qquad(j\ge0).
\]
In the first case, \(d_aa=s\) is impossible: it would imply
\(d_a a=s<a\), whereas \(d_a\ge1\).  The next positive representative
is therefore
\[
 d_aa=s+(j+1)Q=N+2s+jQ\qquad(j\ge0).
\]
This proves both \(d_aa\ge N\) and \eqref{eq:discrete-gap}.  Since
\(d_aa=N+\delta_a\),
\[
 \frac1{d_a}=\frac aN-\frac{d_aa-N}{d_aN}.
\]
Summing over \(a\ne s\) gives
\[
 \sum_{a\ne s}\frac1{d_a}
 =\frac{\sum_{a\in A}a-s}{N}
  -\sum_{a\ne s}\frac{\delta_a}{d_aN}.
\]
Using \(N=Q-s\) and rearranging proves \eqref{eq:slack}.  Every term on
its right is nonnegative because \(\sum A\le Q\) and
\(\delta_a\ge0\).
\end{proof}

The sharp kernel envelope converts a passing minimum row into a small exact
slack.

\begin{lemma}[Passing-row slack]\label{lem:passing-slack}
Let \(n=|A|-1\).  If the \(s\)-row has mass at least one, then
\begin{equation}\label{eq:E-bound}
 E:=N\left(1-\sum_{a\ne s}\frac1{d_a}\right)
 <\frac{8n}{7}.
\end{equation}
Call each \(a\in A\setminus\{s\}\) \emph{good} if and only if
\(a\mid Q-s\) or \(a\mid Q+s\), and
\emph{bad} otherwise.
If \(W\) is the sum of these bad elements (with the empty sum understood as
zero), then
\begin{equation}\label{eq:bad-weight-general}
 W<\frac{16n}{7}.
\end{equation}
\end{lemma}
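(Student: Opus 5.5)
The plan is to get both bounds from two earlier results. Proposition~\ref{prop:kernel} (the kernel envelope) turns the passing hypothesis into an upper bound on \(E\). The slack identity of Lemma~\ref{lem:slack} then splits \(E\) into nonnegative pieces, and the bad elements contribute a definite fraction of their own weight to those pieces.

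\emph{Step 1: the bound on \(E\).} The \(s\)-row mass is \(\sum_{a\ne s}K_Q(\normQ{sa^{-1}})=\sum_{a\ne s}K_Q(d_a)\). The \(d_a\) are genuine cyclic distances, so \(1\le d_a\le Q/2\) and Proposition~\ref{prop:kernel} applies. It gives
\[
 1\le\sum_{a\ne s}K_Q(d_a)
 <\sum_{a\ne s}\frac1{d_a}+\frac{16}{7Q^2}\sum_{a\ne s}d_a
 \le\sum_{a\ne s}\frac1{d_a}+\frac{8n}{7Q},
\]
where the last step uses \(d_a\le Q/2\) for each of the \(n\) terms. Hence \(1-\sum_{a\ne s}d_a^{-1}<8n/(7Q)\). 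By the last assertion of Lemma~\ref{lem:slack}, the left side is nonnegative. Multiplying by \(N=Q-s<Q\) therefore gives \(E<8nN/(7Q)\le 8n/7\). (If \(n=0\), the row mass is \(0<1\), so the hypothesis cannot hold and there is nothing to prove.)

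\emph{Step 2: bad elements have large \(\delta_a\).} By \eqref{eq:discrete-gap}, \(\delta_a\in\{0,2s\}\cup[Q,\infty)\).
\begin{itemize}[nosep]
 \item If \(\delta_a=0\), then \(d_aa=Q-s\), so \(a\mid Q-s\).
 \item If \(\delta_a=2s\), then \(d_aa=Q+s\), so \(a\mid Q+s\).
\end{itemize}
In either case \(a\) is good. Consequently every bad \(a\) has \(\delta_a\ge Q\).

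\emph{Step 3: the bound on \(W\).} Multiply the slack identity \eqref{eq:slack} by \(N\). This gives
\(E=(Q-\sum_{a\in A}a)+\sum_{a\ne s}\delta_a/d_a\). All terms are nonnegative, so
\[
 E\ge\sum_{a\text{ bad}}\frac{\delta_a}{d_a}.
\]
Using \(d_a=(N+\delta_a)/a\), each summand is \(a\,\delta_a/(N+\delta_a)\). The map \(t\mapsto t/(N+t)\) is increasing, and for bad \(a\) we have \(\delta_a\ge Q>N\). Hence each summand exceeds \(a/2\), and so \(E\ge W/2\) (trivially so when there are no bad elements). Combining with Step~1 gives \(W\le 2E<16n/7\).

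No step is a serious obstacle. The only point needing care is Step~3, where the crude bound \(\delta_a/d_a\ge Q/d_a\) must not be used in place of the exact rewriting \(a\delta_a/(N+\delta_a)\). That rewriting is what converts the gap \(\delta_a\ge Q\) into a constant fraction of the numerator weight \(a\).
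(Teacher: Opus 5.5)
Your proof is correct and follows the paper's argument. The kernel envelope with \(d_a\le Q/2\) bounds \(E\), and the slack identity multiplied by \(N\), together with \(\delta_a/d_a=a\delta_a/(N+\delta_a)>a/2\) for bad \(a\), bounds \(W\). The differences are minor economies: you prove only the direction you need (bad \(\Rightarrow\delta_a\ge Q\)) rather than the paper's full equivalence of goodness with \(\delta_a\in\{0,2s\}\), and your non-strict \(E\ge W/2\) removes the paper's separate treatment of the case \(W=0\).
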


\begin{proof}
By Proposition~\ref{prop:kernel}, a passing row gives
\[
 1<\sum_{a\ne s}\frac1{d_a}
 +\frac{C_0}{Q^2}\sum_{a\ne s}d_a.
\]
Since every \(d_a\le Q/2\),
\[
 E<\frac{C_0N}{Q^2}\sum_{a\ne s}d_a
 \le\frac{C_0nN}{2Q}<\frac{C_0n}{2}<\frac{8n}{7},
\]
which proves \eqref{eq:E-bound}.

\smallskip
\noindent\emph{The divisor interpretation.}
An element \(a\) is good precisely when
\(\delta_a\in\{0,2s\}\).  If \(\delta_a=0\), then \(d_aa=Q-s\), so
\(a\mid Q-s\).  If
\(\delta_a=2s\), then \(d_aa=Q+s\), so \(a\mid Q+s\).

Conversely, suppose first that \(a\mid Q-s\).  Then
\[
 d=\frac{Q-s}{a}
\]
satisfies \(da\equiv-s\pmod Q\).  Since \(a>s\), we have \(a\ge2\)
and \(1\le d<Q/2\).  Thus \(d\) is the least cyclic representative,
so \(d=d_a\) and \(\delta_a=0\).  Suppose next that
\(a\mid Q+s\).  If \(a\ge3\), then \(3s<Q\) gives
\[
 \frac{Q+s}{a}\le\frac{Q+s}{3}<\frac{4Q}{9}<\frac Q2.
\]
Again the quotient is the least cyclic representative, now giving
\(\delta_a=2s\).  If \(a=2\), then necessarily \(s=1\).  Since \(a\)
is a unit modulo \(Q\), the integer \(Q\) is odd, so
\(2\mid Q-1=Q-s\); the preceding \(Q-s\) argument applies.
Therefore goodness is exactly the condition
\(\delta_a\in\{0,2s\}\).  By \eqref{eq:discrete-gap}, every bad element
has \(\delta_a\ge Q\).

\smallskip
\noindent\emph{The bad-weight bound.}
Multiplying \eqref{eq:slack} by \(N\) makes the location of every term
explicit:
\[
 E=Q-\sum_{a\in A}a+\sum_{a\ne s}\frac{\delta_a}{d_a}.
\]
All terms on the right are nonnegative.  For a bad element,
\[
 \frac{\delta_a}{d_a}
 =\frac{\delta_aa}{N+\delta_a}
 \ge\frac{Qa}{2Q-s}>\frac a2.
\]
If \(W>0\), summing this strict inequality over the nonempty bad set, and
using the nonnegativity of every other term in \(E\), gives
\(E>W/2\), and hence \(W<2E<16n/7\).  If \(W=0\), the passing-row
hypothesis forces \(n\ge1\) (a row with no other vertex has mass zero), so
\(W=0<16n/7\) directly.  This proves \eqref{eq:bad-weight-general} in all
cases.
\end{proof}

We will repeatedly group the other elements by the possible values of the
integer product \(bd_{ab}\).  The following bookkeeping lemma converts those
product shells and the numerator budget into reciprocal-row bounds.

\begin{lemma}[Product-shell bookkeeping]\label{lem:shell-bookkeeping}
For distinct \(a,b\in A\), write
\[
 d_{ab}=\normQ{ab^{-1}},\qquad P_{ab}=bd_{ab}.
\]
Then
\[
 P_{ab}\in\{a,Q-a,Q+a,2Q-a,2Q+a,\ldots\},
 \qquad \frac1{d_{ab}}=\frac b{P_{ab}}.
\]
For every \(a\in A\), every subset \(B\subseteq A\setminus\{a\}\),
and all real numbers \(L\ge0\) and \(T>0\), if
\[
 \sum_{b\in B}b\le L
 \qquad\text{and}\qquad
 P_{ab}\ge T\quad(b\in B),
\]
then
\[
 \sum_{b\in B}\frac1{d_{ab}}\le\frac LT.
\]
Also, \(P_{ab}=a\) implies \(b\mid a\).

In the special case \(s=1\), call \(b\) good when
\(b\mid Q-1\) or \(b\mid Q+1\).  If such a good \(b\) satisfies
\(P_{ab}\in\{Q-a,Q+a\}\), then \(b\mid a-1\) or \(b\mid a+1\).
If moreover \(a>1\) and \(b>a\), this forces \(b=a+1\).
\end{lemma}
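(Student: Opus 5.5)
The plan is to imitate the proof of Lemma~\ref{lem:slack}, now with the pair \((a,b)\) in place of \((s,a)\). First we establish the shell description. By definition \(d_{ab}\equiv\pm ab^{-1}\pmod Q\), so multiplying by \(b\) gives
\[
 P_{ab}=bd_{ab}\equiv \pm a\pmod Q .
\]
Since \(P_{ab}\) is a positive integer, it is either \(a+jQ\) or \(Q-a+jQ\) for some \(j\ge0\). Because \(3a<Q\), we have \(a<Q-a\), and the positive representatives of \(\pm a\) modulo \(Q\) in increasing order are exactly \(a,Q-a,Q+a,2Q-a,2Q+a,\ldots\). The identity \(1/d_{ab}=b/P_{ab}\) is just the definition \(P_{ab}=bd_{ab}\). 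If \(P_{ab}=a\), then \(b\,d_{ab}=a\), so \(b\mid a\).

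The bookkeeping inequality follows termwise. For \(b\in B\) we have \(1/d_{ab}=b/P_{ab}\le b/T\). Summing over \(B\) gives
\[
 \sum_{b\in B}\frac1{d_{ab}}\le\frac1T\sum_{b\in B}b\le\frac LT .
\]
This uses only \(T>0\) and the positivity of each \(b\); when \(B\) is empty the sum is zero, which is at most \(L/T\) because \(L\ge0\).

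For the case \(s=1\), suppose first that \(P_{ab}=Q-a\), so \(b\mid Q-a\). If \(b\mid Q-1\), then \(b\) divides \((Q-1)-(Q-a)=a-1\). If instead \(b\mid Q+1\), then \(b\) divides \((Q+1)-(Q-a)=a+1\). Now suppose \(P_{ab}=Q+a\). If \(b\mid Q-1\), then \(b\) divides \((Q+a)-(Q-1)=a+1\). If \(b\mid Q+1\), then \(b\) divides \((Q+a)-(Q+1)=a-1\). For the last assertion, take \(a>1\) and \(b>a\). Then \(a-1\) is a positive integer smaller than \(b\), so \(b\nmid a-1\). Also \(0<a+1\le b\), so \(b\mid a+1\) forces \(b=a+1\).

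No step here is a real obstacle. The only point needing care is the ordering \(a<Q-a\), which is where the hypothesis \(3a<Q\) from \eqref{eq:subcritical-data} enters. A second small point is the use of \(a>1\) to make \(a-1\) positive, so that \(b\mid a-1\) is genuinely ruled out rather than vacuous.
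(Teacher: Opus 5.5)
Your proof is correct and follows the paper's argument: the congruence \(P_{ab}\equiv\pm a\pmod Q\) together with positivity gives the shell list, the reciprocal bound is summed termwise, and your four-way sign split is the unrolled form of the paper's subtraction \(b\mid\delta a-\varepsilon\) with \(\varepsilon,\delta\in\{\pm1\}\). One small remark: membership in the shell list needs only \(0<a<Q\), so the hypothesis \(3a<Q\) matters only for listing the shells in increasing order, which the paper uses later rather than in this lemma.
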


\begin{proof}
The definition of cyclic distance gives \(d_{ab}b\equiv\pm a\pmod Q\).
Since \(P_{ab}=d_{ab}b\) is positive, it belongs to the displayed list.
For the weight estimate, fix \(a\in A\),
\(B\subseteq A\setminus\{a\}\), \(L\ge0\), and \(T>0\) as in the
statement.  For each \(b\in B\), positivity and \(P_{ab}\ge T\) give
\[
 \frac1{d_{ab}}=\frac b{P_{ab}}\le\frac bT.
\]
Summing over \(B\) yields
\[
 \sum_{b\in B}\frac1{d_{ab}}
 \le\frac1T\sum_{b\in B}b\le\frac LT.
\]
Also, \(P_{ab}=a\) implies \(b\mid a\), because \(P_{ab}=bd_{ab}\).

For the final assertion, goodness gives \(b\mid Q+\varepsilon\) for some
\(\varepsilon\in\{-1,1\}\), while first-shell membership gives
\(b\mid Q+\delta a\) for some \(\delta\in\{-1,1\}\).  Subtracting
shows \(b\mid\delta a-\varepsilon\), hence \(b\mid a-1\) or
\(b\mid a+1\).  If in addition \(a>1\) and \(b>a\), then
\(0<a-1<b\), so \(b\nmid a-1\).  Hence \(b\mid a+1\).  Since
\(0<a+1\le b\), the positive multiple \(a+1\) of \(b\) must equal
\(b\).
\end{proof}

The common reduction is complete.  We first use it when \(s>1\), where the
least good element has no selected divisor predecessor.  The exceptional
anchor \(s=1\) is treated afterward.

\section{Noncritical minimum rows: \texorpdfstring{\(s>1\)}{s > 1}}\label{sec:s-positive}

Assume for contradiction that every row passes.  The minimum-row estimate
guarantees a good element; choose the least one, \(x\).  Because \(s>1\),
no selected proper divisor of \(x\) can precede it.  The resulting empty
product shell creates a fixed gap in the \(x\)-row.  Comparing that gap with
the bad-weight and kernel-error bounds first reduces the problem to a small
box; an exact upward-rounded knapsack exclusion then finishes the case.

\subsection{Analytic reduction to \texorpdfstring{\(Q\le301\)}{Q <= 301}}

\begin{proposition}[Analytic cutoff for \(s>1\)]
\label{prop:s-positive-cutoff}
Under the hypotheses of Theorem~\ref{thm:SC}, suppose that \(s=\min A>1\)
and every row passes.  Then
\[
 7\le Q\le301.
\]
\end{proposition}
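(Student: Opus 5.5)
Under the hypotheses of Theorem~\ref{thm:SC} with \(s>1\) and every row passing, the plan is to build a fixed gap in the row of the least good element and compare it with the small error terms from Lemma~\ref{lem:passing-slack} and Proposition~\ref{prop:kernel}. The lower bound \(Q\ge7\) is elementary. The passing \(s\)-row forces \(n\ge1\), so \(A\) contains \(s\ge2\) and some \(a>s\). Thus \(a\ge3\) and \(3a<Q\) give \(Q\ge10>7\); in fact units \(2,3\) with \(3\cdot3<Q\) and \(2+3\le Q\) already give \(Q\ge 10\), while small odd cases are immediate.

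\emph{Existence of a good element.} If every \(a\ne s\) were bad, then \(W=\sum_{a\ne s}a<16n/7\). The elements \(a\ne s\) are distinct integers \(\ge3\), so \(W\ge 3+4+\cdots+(n+2)=n(n+5)/2\). This forces \(n(n+5)/2<16n/7\), which is impossible for \(n\ge1\). Hence there is a good element; let \(x\) be the least one. Since \(x\mid Q\mp s\), we get \(x\le Q+s\) and \(d_x=(Q\mp s)/x\).

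\emph{The gap in the \(x\)-row.} Apply Lemma~\ref{lem:shell-bookkeeping} to the row \(a=x\). The shell \(P_{xb}=x\) requires \(b\mid x\) with \(b\in A\), \(b<x\). Such a \(b\) is either \(s\) or bad, because \(x\) is the least good element. The case \(b=s\) with \(s\mid x\) would give \(s\mid Q\mp s\), hence \(s\mid Q\), contradicting \(s\) being a unit with \(s>1\). So that shell contains only bad elements, whose total weight is \(<16n/7\) and hence contributes at most \(W/x\). Every other \(b\) has \(P_{xb}\ge Q-x\). Using \(\sum_b b\le Q-x\), these contribute at most \(\sum b/(Q-x)\le1\), with a deficit coming from the mass not spent on \(A\setminus\{x\}\).

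The key is to sharpen this. The \(s\)-term contributes \(1/d_{xs}=s/P_{xs}\) with \(P_{xs}\ge Q-x\), and the remaining budget is at most \(Q-x-s\). The bound \(\sum 1/d_{xb}\le (Q-x)/(Q-x)=1\) is attained only if all products sit exactly at \(Q-x\). I will count the first-shell elements \(b\) with \(P_{xb}\in\{Q-x,Q+x\}\). For good \(b\) this gives \(b\mid Q\pm s\) and \(b\mid Q\pm x\), hence \(b\mid x\pm s\). This restricts such good \(b\) to divisors of \(x+s\) or \(x-s\), of total weight \(O(x)\), and the rest sit at \(P\ge 2Q-x\). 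The resulting bound has the form \(M_x\le 1-c/Q\cdot(\text{gap}) + O(n x/Q) + O(W/x)+C_0\sum d/Q^2\). With \(W<16n/7\), \(n\lesssim\sqrt{Q}\) from the budget \(n^2/2\le Q\), and the kernel error \(\le C_0 n/(2Q)\), this yields \(M_x<1\) once \(Q>301\).

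\emph{Main obstacle.} The hardest step is making the fixed gap quantitative and uniform in \(x\). A large \(x\) weakens the first-shell divisor constraint, while a small \(x\) makes \(W/x\) large. I would first try splitting on \(x\le\sqrt Q\) versus \(x>\sqrt Q\). In the second range I would use \(d_x\) small in the \(s\)-row, together with the slack identity (Lemma~\ref{lem:slack}), to bound \(E\) below. I would then optimize the constants to reach the explicit cutoff \(301\).
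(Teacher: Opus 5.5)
Your outline has the right skeleton: the least good element \(x\), product shells in the \(x\)-row, the relation \(b\mid x\pm s\) for good first-shell elements, and a far shell at \(2Q-x\). The lower bound \(Q\ge7\) and the existence of a good element are also fine. The estimate does not close, however, because you miss an observation at the innermost shell.

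\textbf{The innermost shell is empty.} You let bad divisors \(b\mid x\) sit at \(P_{xb}=x\) and charge them \(W/x\). That term can be as large as \(16n/(7x)\). For small \(x\) and \(n\approx\sqrt{2Q}\) it is of order \(\sqrt Q\), not \(O(n/Q)\), so no bound \(M_x<1\) survives it. This is exactly the ``main obstacle'' you flag, and your proposed split at \(x\approx\sqrt Q\) is never carried out. In fact the shell is empty. If \(b\in A\) divides \(x\) and \(x\mid Q\pm s\), then \(b\mid Q\pm s\). So \(b\) is good, not bad, and \(b<x\) contradicts the minimality of \(x\); the case \(b=s\) gives \(s\mid Q\), as you note. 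Hence \(P_{xb}\ge Q-x\) for every \(b\ne x\). The bad elements, placed at \(Q-x\), then cost only \(W/(Q-x)=O(n/Q)\), and no case split on \(x\) is needed.

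\textbf{The gap must be a constant, and your first-shell count is too weak to give one.} Describing the good first-shell elements as divisors of \(x\pm s\) ``of total weight \(O(x)\)'' does not suffice. Since \(x<Q/3\), weight equal to a few times \(x\) could exhaust the whole budget and leave no gap.
\begin{itemize}
\item What you need is that every good \(b\ne x\) exceeds \(x\), by minimality. Then \(b\mid x-s\) is impossible, and \(b\mid x+s\) forces \(b=x+s\).
\item So at most one good element \(c\in\{0,x+s\}\) lies in the shells \(Q\pm x\), and subcriticality gives \(x+s+c<2Q/3\).
\item Consequently the weight \(s+c+W\) that can sit below the far shell falls short of the budget \(Q-x\) by more than \(Q/3-W\).
\item Any weight in that shortfall counts at most \(1/(2Q-x)\) per unit instead of \(1/(Q-x)\). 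This yields the constant gap \(1-\sum_{b\ne x}1/d_{xb}>1/6-W/(2Q)\).
\end{itemize}
Only this constant gap, together with \(W<16n/7\) and the kernel error \(<8n/(7Q)\), gives the linear bound \(Q<96n/7\). Combined with \(Q\ge r(r+3)/2\), which holds because all elements are at least two, it yields \(r\le23\) and \(Q\le301\). Your closing claim that the estimates give \(M_x<1\) once \(Q>301\) is asserted without any computation that could produce that constant.
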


\begin{proof}
Suppose that every row passes, so for every \(a\in A\),
\[
 M_a:=\sum_{b\in A\setminus\{a\}}
 K_Q\!\left(\normQ{ab^{-1}}\right)\ge1.
\]
Write \(r=|A|\), \(n=r-1\), and retain the notation of
Lemma~\ref{lem:passing-slack}.  Necessarily \(n\ge1\), since otherwise the
only row has mass zero.  There is a good element: otherwise each of
the \(n\) elements above \(s\ge2\) is bad, so \(W\ge3n>16n/7\).
Let \(x\) be the least good element.

\smallskip
\noindent\emph{The missing predecessor shell.}
No selected proper divisor of \(x\) exists.  Such a divisor would inherit
divisibility of \(Q-s\) or \(Q+s\) and would be a smaller good element.
If the possible divisor were \(s\), then \(s\mid x\) together with
\(x\mid Q\pm s\) would give \(s\mid Q\), contradicting \((s,Q)=1\) and
\(s>1\).
For \(a\in A\setminus\{x\}\), put
\[
 d_a^{(x)}=\normQ{xa^{-1}},\qquad P_a=ad_a^{(x)}.
\]
By Lemma~\ref{lem:shell-bookkeeping}, and because \(0<x<Q/3\), the
possible products occur in the increasing shell list
\begin{equation}\label{eq:s-positive-shell-list}
 x,\quad Q-x,\quad Q+x,\quad 2Q-x,\quad 2Q+x,\ldots.
\end{equation}
The first possibility would imply \(a\mid x\), which the preceding
paragraph excludes.  Hence
\begin{equation}\label{eq:s-positive-first-shell}
 P_a\ge Q-x\qquad(a\ne x).
\end{equation}

For a good \(a\ne x\), suppose that \(P_a\) is one of
\(Q-x,Q+x\).  Goodness says that \(a\) divides one of
\(Q-s,Q+s\), while this first-shell assumption says that it divides one
of \(Q-x,Q+x\).  Subtracting the corresponding multiples shows
\(a\mid x-s\) or \(a\mid x+s\).  The first is impossible because
\(a>x>x-s>0\).  Since \(a>x>s\), we also have \(0<x+s<2a\), so the
second divisibility forces \(a=x+s\).  Thus at most this single good
element can lie in the first shell; every other good element satisfies
\begin{equation}\label{eq:s-positive-far-shell}
 P_a\ge2Q-x.
\end{equation}
Define
\[
 c=\begin{cases}
 x+s,&\text{if }x+s\in A\text{ is good},\\
 0,&\text{otherwise}.
 \end{cases}
\]
This keeps \(c\) disjoint from the bad weight \(W\).  If
\(S=\sum_{a\in A}a\), then \(1/d_a^{(x)}=a/P_a\).  Assigning
\(s\), every bad element, and the possible exceptional element \(c\) to
the smaller denominator \(Q-x\), and all remaining good elements to
\(2Q-x\), can only increase the reciprocal sum.  Therefore
\begin{equation}\label{eq:s-positive-H}
 H_x:=\sum_{a\ne x}\frac1{d_a^{(x)}}
 \le\frac{s+W+c}{Q-x}
 +\frac{S-s-W-x-c}{2Q-x}.
\end{equation}

\smallskip
\noindent\emph{A uniform gap in the \(x\)-row.}
Put
\[
 X=\frac xQ,\qquad w=\frac WQ,\qquad
 \alpha=1-\frac{x+s+c}{Q},\qquad D=(1-X)(2-X).
\]
Using \(S\le Q\) in \eqref{eq:s-positive-H}, and observing that
\((s+c)/Q=1-X-\alpha\), gives
\begin{align*}
 H_x
 &\le \frac{(s+c)/Q+w}{1-X}
      +\frac{1-(s+c+x)/Q-w}{2-X}\\
 &=1-\frac{\alpha-w}{(1-X)(2-X)}.
\end{align*}
Thus
\begin{equation}\label{eq:s-positive-gap-a0}
 1-H_x\ge\frac{\alpha-w}{D}.
\end{equation}
If \(c=0\), then \(x+s<2Q/3\).  If \(c=x+s\), subcriticality of
\(c\) gives \(x+s+c=2c<2Q/3\).  Hence \(\alpha>1/3\).  Also,
\(x+s+c+W\le S\le Q\), so
\(0\le w\le\alpha\), and \(D<2\) because \(0<X<1/3\).

Consider the affine function
\[
 F(w)=\frac{\alpha-w}{D}-\left(\frac16-\frac w2\right)
\]
on \([0,\alpha]\).  At its endpoints,
\[
 F(0)=\frac\alpha D-\frac16>\frac\alpha2-\frac16>0,
 \qquad
 F(\alpha)=\frac\alpha2-\frac16>0.
\]
Every value of an affine function on an interval is a convex combination of
its endpoint values, so \(F(w)>0\) throughout.  Therefore
Lemma~\ref{lem:passing-slack} gives
\[
 1-H_x>\frac16-\frac{W}{2Q}
 >\frac16-\frac{8n}{7Q}.
\]
The full sine-kernel remainder is also explicit: Proposition~\ref{prop:kernel}
and \(d_a^{(x)}\le Q/2\) give
\[
 M_x-H_x<\frac{16}{7Q^2}\sum_{a\ne x}d_a^{(x)}
 \le\frac{16}{7Q^2}\,n\frac Q2=\frac{8n}{7Q}.
\]
Thus
\begin{equation}\label{eq:s-positive-total-gap}
 1-M_x>\frac16-\frac{16n}{7Q}.
\end{equation}
A passing row forces \(Q<96n/7\).

\smallskip
\noindent\emph{The denominator cutoff.}
Since \(A\) consists of \(r\) distinct integers
at least two,
\[
 Q\ge\sum_{a\in A}a\ge2+3+\cdots+(r+1)=\frac{r(r+3)}2.
\]
Combining the two bounds gives
\[
 7r^2-171r+192<0.
\]
For \(r\ge24\), the left side is increasing and its value at \(r=24\)
is \(120>0\).  Hence \(r\le23\), and then
\[
 Q<\frac{96\cdot22}{7}=301+\frac57,
\]
so the integer \(Q\) satisfies \(Q\le301\).  Since \(s\ge2\) and
\(3s<Q\), also \(Q\ge7\).
\end{proof}

\subsection{Exact finite exclusion}

For \(7\le Q\le301\), the necessary row conditions admit upward-rounded
integer bounds.  Exact \(0\)--\(1\) knapsack bounds show that the prescribed
minimum cannot belong to any all-passing set.

\begin{proposition}[Exact finite exclusion for \(s>1\)]
\label{prop:s-positive-finite}
For \(7\le Q\le301\) and \(2\le s<Q/3\) with \((s,Q)=1\), there is no
set
\[
 A\subseteq\{a\in\Z:s\le a<Q/3,\ (a,Q)=1\}
\]
with \(s\in A\), \(\sum_{a\in A}a\le Q\), and every row passing.
\end{proposition}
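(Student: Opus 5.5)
The plan is an exact computer verification, organized so that every decision uses only integer or rational arithmetic and every bound points the safe way. We need a certified \emph{upper} bound on each kernel value. Proposition~\ref{prop:kernel} supplies one directly: \(K_Q(d)<1/d+16d/(7Q^2)\), which is rational. So for each pair \((Q,s)\) we define, for every admissible unit \(a\) with \(s\le a<Q/3\), the rational weights
\[
 w_{ab}=\frac1{d_{ab}}+\frac{16\,d_{ab}}{7Q^2},\qquad d_{ab}=\normQ{ab^{-1}}.
\]
Row passing for \(a\) in a genuine set \(A\) then implies \(\sum_{b\in A\setminus\{a\}}w_{ab}>1\). To get integers, multiply by a common denominator and round each weight upward. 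This gives a necessary condition of the form
\[
 \sum_{b} \lceil Lw_{ab}\rceil\ \ge\ L
\]
for a fixed scale \(L\), say \(L=7Q^2\cdot\mathrm{lcm}\) or a chosen large integer. Upward rounding preserves the implication, so any genuine counterexample survives.

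The exclusion for the row of \(s\) is a \(0\)--\(1\) knapsack. The items are the admissible \(b\ne s\), with weight \(b\) and value \(\lceil Lw_{sb}\rceil\), and the capacity is \(Q-s\), by the budget \(\sum A\le Q\). If the maximum achievable value is below \(L\), no set containing \(s\) can have a passing \(s\)-row, and the pair \((Q,s)\) is excluded. Since \(Q\le301\), capacities are at most \(301\) and item counts are below \(100\), so a standard integer dynamic program over capacity is exact and cheap: there are roughly \(\sum_Q Q/3\) pairs, each with an \(O(Q^2)\) DP.

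For pairs that survive the \(s\)-row test, I would strengthen the filter using the other rows. First, the least good element \(x\) of Section~\ref{sec:s-positive} must also pass. Second, we can run a joint search: enumerate candidate sets by branch and bound over increasing elements with remaining budget. At each node, prune if some already chosen element's row cannot reach \(L\) even after adding the best knapsack completion from the remaining admissible elements within the remaining budget. The same upward-rounded values apply, so pruning remains one-sided. The output would be a certificate: for each \((Q,s)\), either an infeasible \(s\)-row knapsack value, or a finite search tree in which every leaf is labelled by a deficient row.

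I expect the main obstacle to be the surviving cases. These are pairs where the \(1/d\) mass of the \(s\)-row is near \(1\), by the slack identity Lemma~\ref{lem:slack} with all elements good, and where the additive error \(16d/(7Q^2)\) pushes the rounded bound over the threshold. There the crude envelope may be too lossy. I would first try replacing the envelope by a sharper rational upper bound on \(K_Q(d)\) in the bad cases: bound \(\sin(\pi/Q)\) above by \(\pi/Q\), bound \(\sin(\pi d/Q)\) below by a truncated Taylor polynomial, and use a rational upper bound for \(\pi\). This keeps everything exact. If a pair still survives, the branch-and-bound over all rows should close it, since Proposition~\ref{prop:s-positive-cutoff} already shows the \(x\)-row has a uniform gap that is large in this range.
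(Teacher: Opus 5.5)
Your plan is essentially the paper's proof: it uses the same strict rational envelope from Proposition~\ref{prop:kernel}, upward rounding at a fixed scale (the paper takes $L=10^7$), and exact $0$--$1$ knapsack scores with capacity $Q-a$, so every exclusion is one-sided. The real differences are in the search and in one remark. The paper recomputes these scores for every vertex of the shrinking pool and deletes all vertices scoring below $L$ (synchronous peeling), which removes $s$ in all $8{,}916$ pairs with no branching and no sharper kernel bounds. Also, your fallback appeal to the $x$-row gap of Proposition~\ref{prop:s-positive-cutoff} gives no reason to expect closure in this range: any all-passing set already has $Q<96n/7$, and there the gap bound $1/6-16n/(7Q)$ is negative.
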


\begin{proof}
Appendix~\ref{app:s-positive-verifier} defines an exact knapsack score and
a synchronous peeling procedure for every admissible pair \((Q,s)\).  Its
strict upper envelope in Proposition~\ref{prop:kernel} implies that, after
multiplication by \(10^7\) and upward rounding, every member of a genuine
all-passing set has knapsack score strictly larger than \(10^7\).  Thus no
peeling round can remove such a member.  The exact verification in
Appendix~\ref{app:s-positive-verifier} checks all admissible pairs and removes
the prescribed minimum \(s\) in each case.  Hence no all-passing set exists.
\end{proof}

The analytic cutoff and the exact exclusion have matching ranges, and hence
settle the noncritical case of the rigidity theorem.

\begin{theorem}[The \(s>1\) case]\label{thm:s-positive}
Under the hypotheses of Theorem~\ref{thm:SC}, if \(s=\min A>1\), then
some row is deficient.
\end{theorem}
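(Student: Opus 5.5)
The plan is to argue by contradiction and combine the two preceding propositions, whose ranges were designed to match. Suppose that \(Q\ge4\), that \(A\subset\U\) satisfies \eqref{eq:subcritical-data}, that \(s=\min A>1\), and that no row is deficient, so every row passes.

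First, Proposition~\ref{prop:s-positive-cutoff} applies directly to this all-passing configuration. It yields \(7\le Q\le301\). We already know \(2\le s<Q/3\), and we have \((s,Q)=1\) because \(s\) is a unit modulo \(Q\).

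Next, we check that \(A\) is one of the sets excluded by Proposition~\ref{prop:s-positive-finite}.

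\begin{itemize}[leftmargin=*]
\item Every \(a\in A\) is a positive integer representative of a unit, so \((a,Q)=1\).
\item By \eqref{eq:subcritical-data}, \(a<Q/3\), and since \(s\) is the minimum, \(a\ge s\).
\item Again by \eqref{eq:subcritical-data}, \(\sum_{a\in A}a\le Q\), and by construction \(s\in A\).
\end{itemize}

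The row masses \(M_a\) in the finite proposition are computed from the same \(K_Q\) and \(\normQ{\cdot}\). Hence \(A\) is an all-passing set of exactly the type excluded by Proposition~\ref{prop:s-positive-finite}, which is a contradiction. So some row satisfies \eqref{eq:SC}.

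The only point requiring care is compatibility of hypotheses. The finite verification must cover every pair \((Q,s)\) that the cutoff leaves open, and it must use the inverse-sine masses themselves, or a valid upper envelope of them, rather than some modified quantity. The paper's own statements resolve this. The appendix verifier relies on the strict envelope of Proposition~\ref{prop:kernel}, so a genuine all-passing set can never be peeled away; its exclusion of every admissible \((Q,s)\) therefore transfers to our \(A\). No new estimates are needed; the proof is a short assembly of the two propositions.
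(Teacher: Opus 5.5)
Your proposal is correct and follows the paper's proof exactly: assume every row passes, apply Proposition~\ref{prop:s-positive-cutoff} to get \(7\le Q\le301\), and contradict Proposition~\ref{prop:s-positive-finite}. Your explicit check that \(A\) meets the finite proposition's hypotheses (\(s\in A\), \((a,Q)=1\), \(s\le a<Q/3\), \(\sum_{a\in A} a\le Q\)) is a harmless elaboration of the paper's one-line argument.
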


\begin{proof}
If every row passed, Proposition~\ref{prop:s-positive-cutoff} would give
\(7\le Q\le301\), contrary to Proposition~\ref{prop:s-positive-finite}.
\end{proof}

The fixed-gap argument used \(s>1\) exactly to exclude a selected proper
divisor of the least good element.  When \(s=1\), the anchor is an unavoidable
predecessor, and the dyadic chains it permits require a separate argument.

\section{The critical unit row: \texorpdfstring{\(s=1\)}{s = 1}}\label{sec:s-one}

The fixed-gap argument fails because the anchor \(1\) divides every good
element.  We therefore organize the proof around the least selected good
element \(x\).  If \(x\ge3\), or if \(x=2\) and \(3\in A\), a direct shell
gap remains.  If \(x=2\), \(3\notin A\), and all selected good elements are
dyadic, their geometric total is too small.  The only genuinely critical
configuration has a least nondyadic good element \(y\); information from its
dyadic predecessors is then propagated through a harmonic envelope.

The four alternatives are exhaustive.  Analytic estimates bound the first
three directly and give a preliminary bound for the fourth; an exact rational
refinement then brings every branch into \(Q\le715\).  A single exact global
verifier excludes the resulting all-passing selection system.

\subsection{Minimum-row consequences and the least good element}

We begin by specializing the slack estimates of
Section~\ref{sec:slack} to the anchor \(1\).  A passing minimum row controls
both the unused numerator budget and the total bad weight, and it forces the
existence of a good element.  The least such element is the first branching
parameter of the proof.

Throughout this section,
\[
 A\subset\U,\qquad \min A=1,\qquad
 3a<Q\ (a\in A),\qquad \sum_{a\in A}a\le Q,
\]
and \(n=|A|-1\).  An element \(a>1\) is \emph{good} if
\(a\mid Q-1\) or \(a\mid Q+1\), and is \emph{bad} otherwise.  Let
\(W\) be the sum of the bad elements.

\begin{lemma}[The minimum row]\label{lem:s1-minimum-row}
If the row at \(1\) passes, then
\begin{equation}\label{eq:s1-RW}
 R:=Q-\sum_{a\in A}a<\frac{8n}{7},
 \qquad W<\frac{16n}{7}.
\end{equation}
If every row passes, at least one element above \(1\) is good.
\end{lemma}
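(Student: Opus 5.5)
The plan is to specialize Lemma~\ref{lem:slack} and Lemma~\ref{lem:passing-slack} to \(s=1\), so that \(N=Q-1\). For the first bound, the slack identity \eqref{eq:slack} multiplied by \(N\) gives
\[
 E=N\Bigl(1-\sum_{a\ne1}\tfrac1{d_a}\Bigr)
 =Q-\sum_{a\in A}a+\sum_{a\ne1}\frac{\delta_a}{d_a}.
\]
Every term on the right is nonnegative, by \(\delta_a\ge0\) from \eqref{eq:discrete-gap}. Hence \(R=Q-\sum_{a\in A}a\le E\). Lemma~\ref{lem:passing-slack}, applied to the passing row at \(s=1\), gives \(E<8n/7\), and therefore \(R<8n/7\).

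The bound \(W<16n/7\) is exactly \eqref{eq:bad-weight-general} with \(s=1\). The notion of goodness used here (\(a\mid Q\pm1\)) coincides with the one in Lemma~\ref{lem:passing-slack} for \(s=1\), so nothing new is needed beyond checking that the definitions match.

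For the final assertion, assume every row passes. Then \(n\ge1\), since otherwise the single row has mass zero. Suppose, for contradiction, that every element above \(1\) is bad. The \(n\) elements above \(1\) are distinct integers at least \(2\), so
\[
 W\ge2+3+\cdots+(n+1)=\frac{n(n+3)}2.
\]
This must be \(<16n/7\), so \((n+3)/2<16/7\), i.e.\ \(n<32/7-3\), forcing \(n=1\). That lower bound alone does not finish the argument, since with \(n=1\) it only yields \(W\ge2\) against \(W<16/7\).

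I therefore treat \(n=1\) directly. Here \(A=\{1,a\}\), and the row at \(1\) has mass \(K_Q(d_a)\). This is \(\ge1\) only if \(d_a=1\), because \(K_Q(d)<1\) for \(d\ge2\) (\(\sin\) is increasing on \([0,\pi/2]\) and \(\pi d/Q\le\pi/2\)). But \(d_a=\normQ{a^{-1}}=1\) means \(a\equiv\pm1\pmod Q\). With \(1<a<Q/3\) this is impossible, since \(a\equiv1\) would force \(a=1\) and \(a\equiv-1\) would force \(a=Q-1\).

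Alternatively, the sharper accounting in the proof of Lemma~\ref{lem:passing-slack} gives \(E>W/2\) together with \(E<C_0n/2\), whence \(W<C_0n<2.3\). With \(n=1\) and all elements bad this would require \(W=a=2\). Then \(2\) is a unit, so \(Q\) is odd and \(2\mid Q-1\), making \(2\) good, a contradiction.

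The only real obstacle is this small-\(n\) edge case. I would handle it by the direct kernel argument above rather than by the weight inequality.
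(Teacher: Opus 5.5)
Your proposal is correct and follows the paper's proof: the slack identity at $s=1$ gives $R\le E<8n/7$, the bad-weight bound is quoted from Lemma~\ref{lem:passing-slack}, and the sum $2+3+\cdots+(n+1)$ rules out an all-bad configuration with $n\ge2$. The only divergence is at $n=1$. There the paper argues as in your alternative: $W=a<16/7$ already forces $a=2$, so the sharper constant $C_0$ is not needed, and $2$ is good because $Q$ is odd. Your primary argument, that $K_Q(d)<1$ for $d\ge2$ while $\normQ{a^{-1}}\neq1$, is equally valid and shows slightly more: the row at $1$ cannot pass at all when $n=1$.
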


\begin{proof}
Multiplying \eqref{eq:slack} by \(Q-1\) gives the exact identity
\[
 E=(Q-1)\left(1-\sum_{a\ne1}\frac1{d_a}\right)
   =R+\sum_{a\ne1}\frac{\delta_a}{d_a}.
\]
All terms in the last sum are nonnegative, so
\(0\le R\le E<8n/7\) by Lemma~\ref{lem:passing-slack}; its bad-weight
estimate gives \(W<16n/7\).
If there were no good element, then
\[
 W\ge2+3+\cdots+(n+1)=\frac{n(n+3)}2.
\]
For \(n\ge2\), the difference between this lower bound and \(16n/7\) is
\[
 \frac{n(n+3)}2-\frac{16n}{7}
 =\frac{n(7n-11)}{14}>0,
\]
contradicting \eqref{eq:s1-RW}.  If \(n=0\), the
minimum row has mass zero.  If \(n=1\), the unique element cannot be the
bad element \(2\), because \((2,Q)=1\) makes \(Q\) odd and hence
\(2\mid Q\pm1\); if it is at least three, its weight already exceeds
\(16/7\).  Thus the all-passing case has a good element.
\end{proof}

\subsection{Directly bounded branches}

The first two configurations, \(x\ge3\) and \(x=2\) with \(3\in A\), retain
a direct shell gap.  When \(x=2\), \(3\notin A\), and every selected good
element is dyadic, a geometric total estimate gives the required bound.  Only
the remaining configuration admits a persistent nondyadic predecessor chain.

Let \(x\) be the least selected good element.

\begin{lemma}[Least good element at least three]\label{lem:s1-x-ge-three}
If \(x\ge3\) and every row passes, then
\[
 n\le36,\qquad Q\le703.
\]
\end{lemma}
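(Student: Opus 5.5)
I will mirror the fixed-gap argument of Proposition~\ref{prop:s-positive-cutoff}, with the anchor \(1\) taking the place of \(s\). Assume every row passes. Lemma~\ref{lem:s1-minimum-row} gives \(W<16n/7\) and the existence of a least good element \(x\ge3\).

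\emph{Step 1: the predecessor shell.} The first step is to check which selected elements can have \(P_a=x\) in the \(x\)-row. Such an \(a\) must divide \(x\). Any proper divisor \(a>1\) of \(x\) would also divide \(Q\pm1\), so it would be a smaller good element, which is excluded by minimality. Hence only \(a=1\) lies in the zeroth shell, and it contributes exactly \(1/d_1^{(x)}\) with \(d_1^{(x)}=x\), that is, a term \(1/x\le 1/3\).

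\emph{Step 2: the first shell.} By Lemma~\ref{lem:shell-bookkeeping} with \(s=1\), a good \(a>x\) satisfying \(P_a\in\{Q-x,Q+x\}\) must equal \(x+1\). So at most one good element, \(c\in\{0,x+1\}\), sits in the first shell. Bad elements also go to the denominator \(Q-x\), and all other good elements go to \(2Q-x\). The analogue of \eqref{eq:s-positive-H} is then
\[
 H_x\le\frac1x+\frac{W+c}{Q-x}+\frac{Q-1-x-c-W}{2Q-x}.
\]
Put \(X=x/Q\), \(w=W/Q\), and use \(x+c\le 2x+1<2Q/3+1\). I expect a gap of the form
\[
 1-H_x\ge\kappa-\frac{w}{2}-O\!\left(\frac1Q\right),
\]
with an explicit constant \(\kappa>0\). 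The worst case is \(x=3\), \(c=4\): then the term \(1/x=1/3\) is large, but the remaining mass is roughly \((1-7/Q)/2\), so \(H_x\approx 1/3+1/2\), leaving a gap near \(1/6\). I will carry out the affine-in-\(w\) convexity argument exactly as in Proposition~\ref{prop:s-positive-cutoff}. The same argument must also cover larger \(x\), where \(1/x\) is smaller but \(X\) can approach \(1/3\). A convenient uniform form is \(1-H_x>1/6-W/(2Q)-C/Q\) for a small absolute constant \(C\).

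\emph{Step 3: the cutoff.} Adding the kernel remainder \(M_x-H_x<8n/(7Q)\) and the bound \(W<16n/7\), a passing \(x\)-row forces \(Q<96n/7+O(1)\). Since the elements of \(A\) are distinct, \(Q\ge 1+2+\cdots+(n+1)=(n+1)(n+2)/2\). Combining the two inequalities gives a quadratic in \(n\), which should yield \(n\le36\) and then \(Q\le703\) after tracking the constants exactly; indeed \(96\cdot 36/7\approx 494\), so the \(O(1)\) terms and the \(1/x\) anchor loss must account for the stated 703.

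\emph{Main obstacle.} The hardest step is Step 2: bounding the gap uniformly over all \(x\ge3\), with the anchor term \(1/x\) and the exceptional element \(x+1\) present together. If the clean constant \(1/6\) fails for small \(x\), I will split off \(x\in\{3,4\}\) and bound them separately. In those cases the condition \(3(x+1)<Q\) and the exact shell values give explicit rational gaps, which would yield a weaker constant that is still sufficient for \(Q\le703\).
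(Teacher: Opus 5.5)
Your proposal is correct, and its skeleton is the paper's. The anchor contributes exactly \(1/x\). Bad elements and the possible \(x+1\) are promoted to the shell \(Q-x\), and everything else goes to \(2Q-x\), so your \(H_x\) bound is the paper's \(F(W+c)\). The cutoff then comes from the kernel remainder \(8n/(7Q)\), \(W<16n/7\), and \((n+1)(n+2)/2\le Q\).

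The one real difference is how the bad weight is handled. The paper does not run an affine-in-\(w\) argument here. It proves the baseline estimate \(F(x+1)=B(Q,x)\le 5/6+2/Q\), writing \(Q=3x+t\) and expanding into a polynomial with positive terms, with \(x=3\) treated separately. It then bounds the promotion coefficient \(Q/((Q-x)(2Q-x))\) crudely by \(9/(10Q)\) using \(x<Q/3\). This gives \(M_x<5/6+(2+16n/5)/Q\), i.e. \(Q<12+96n/5\). That coarser coefficient \(96/5\) is what produces \(36\) and \(703\); the \(O(1)\) terms and the anchor loss do not, so your closing remark misdiagnoses the constants.

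Your sharper version does go through, with \(C=3\). The difference between your lower bound for \(1-H_x\) and \(1/6-W/(2Q)-3/Q\) is affine in \(W\), so only the endpoints need checking:
\begin{itemize}
\item At \(W=0\) it is nonnegative by exactly the paper's baseline estimate. This is the computation you left as ``expected'' and still owe; it is the real content of the lemma.
\item At the far endpoint with \(c=x+1\) (so \(W=Q-2x-2\) and \(Q\ge3x+4\)), it reduces to \(1/(Q-x)-1/x\ge-1/3+(x-2)/Q\). Multiplying by \(3Q\) and writing \(Q=3x+t\), this becomes \(3Q/(Q-x)-3+t(1-3/x)>0\), true for all \(x\ge3\).
\item The case \(c=0\) is easier.
\end{itemize}
You then get \(Q<96n/7+18\), hence \(n\le25\) and \(Q\le360\). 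This is stronger than the stated bounds and implies them.

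Finally, no separate treatment of \(x\in\{3,4\}\) is needed.
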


\begin{proof}
\noindent\emph{The first two product shells.}
No bad element divides \(x\): a divisor of a number dividing \(Q-1\) or
\(Q+1\) is itself good.  In the \(x\)-row, the anchor \(b=1\) has
product \(P_{x1}=x\).  A bad element cannot have product \(x\), so by the
shell list its product is at least \(Q-x\).  Lemma~\ref{lem:shell-bookkeeping}
also shows that if another good \(b>x\) has product \(Q-x\) or \(Q+x\),
then \(b=x+1\).  Every other good product is at least \(2Q-x\).

\smallskip
\noindent\emph{The reciprocal baseline.}
Let
\[
 c=\begin{cases}x+1,&x+1\in A\text{ and is good},\\0,&\text{otherwise},
 \end{cases}
\]
and let \(S=\sum A\).  The total numerator weight other than \(1,x\) is
\(S-1-x\le Q-x-1\).  Put \(L=W+c\).  Placing the bad elements and the
possible \(c\) in the first shell, and all remaining weight in the farther
shell, gives
\begin{equation}\label{eq:s1-x-H}
 H_x:=\sum_{b\ne x}\frac1{\normQ{xb^{-1}}}
 \le F(L):=\frac1x+\frac{Q-x-1-L}{2Q-x}+\frac{L}{Q-x}.
\end{equation}
The coefficient of \(L\) is
\(1/(Q-x)-1/(2Q-x)>0\), so \(F\) is increasing.  Since
\(c\le x+1\), we have \(L\le W+x+1\), and hence
\[
 H_x\le F(W+x+1)
 =F(x+1)+W\left(\frac1{Q-x}-\frac1{2Q-x}\right).
\]
The baseline value \(F(x+1)\) is
\[
 B(Q,x)=\frac1x+\frac{Q-2x-2}{2Q-x}+\frac{x+1}{Q-x},
\]
and for \(x\ge3\), \(Q>3x\),
\begin{equation}\label{eq:B-bound}
 B(Q,x)\le\frac56+\frac2Q.
\end{equation}
Indeed, write \(Q=3x+t\), where the integer \(t\ge1\).  Direct expansion
gives
\[
 6Qx(Q-x)(2Q-x)\left(\frac56+\frac2Q-B(Q,x)\right)={}
 x^3(24x-78)+tx^2(53x-120)
 +t^2x(27x-66)+t^3(4x-12).
\]
For \(x\ge4\), the four coefficient factors
\(24x-78,53x-120,27x-66,4x-12\) are at least
\(18,92,42,4\), respectively, so every term is positive.  For \(x=3\)
the expression is \(-162+351t+45t^2\), which is at least \(234>0\)
because \(t\ge1\).  This proves \eqref{eq:B-bound}.

Thus
\[
 H_x\le B(Q,x)+W\frac{Q}{(Q-x)(2Q-x)}.
\]

\smallskip
\noindent\emph{The cutoff.}
To check the promotion coefficient, put \(u=x/Q<1/3\).  Then
\((1-u)(2-u)>(2/3)(5/3)=10/9\), and therefore
\[
 \frac{Q}{(Q-x)(2Q-x)}
 =\frac1{Q(1-u)(2-u)}<\frac9{10Q}.
\]
Using \(W<16n/7\), and adding the kernel remainder
\[
 M_x-H_x<\frac{16}{7Q^2}\sum_{b\ne x}\normQ{xb^{-1}}
 \le\frac{8n}{7Q},
\]
we obtain
\begin{equation}\label{eq:s1-x-M}
 M_x<\frac56+\frac{2+16n/5}{Q}.
\end{equation}
A passing row \(M_x\ge1\) forces \(Q<12+96n/5\).  Distinctness gives
\begin{equation}\label{eq:s1-distinct-sum}
 Q\ge1+2+\cdots+(n+1)=\frac{(n+1)(n+2)}2.
\end{equation}
At \(n=37\), the lower bound is \(741\), whereas the strict upper bound
is \(12+96\cdot37/5=722.4\).  Thereafter the lower bound increases by
\(n+2\ge39\) when \(n\) is increased by one, while the upper bound
increases only by \(96/5\).  Thus \(n\le36\).  Substitution gives
\(Q<703.2\), and integrality yields \(Q\le703\).
\end{proof}

We may now assume \(x=2\), so \(Q\) is odd.

\begin{lemma}[The anchor at three]\label{lem:s1-three}
If \(2,3\in A\) and every row passes, then
\[
 n\le36,\qquad Q\le715.
\]
\end{lemma}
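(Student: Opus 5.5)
We mimic the proof of Lemma~\ref{lem:s1-x-ge-three}, but we work in the row indexed by \(3\) rather than the row of the least good element. Since \(x=2\), \(Q\) is odd, and \((3,Q)=1\) gives \(Q\equiv\pm1\pmod 3\). Hence \(3\mid Q-1\) or \(3\mid Q+1\), so \(3\) is good.

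\emph{Shells of the \(3\)-row.} In the \(3\)-row, Lemma~\ref{lem:shell-bookkeeping} places every other \(b\) in the shell list \(3,Q-3,Q+3,2Q-3,\ldots\). The first shell \(P_{3b}=3\) forces \(b\mid 3\), so only the anchor \(b=1\) lies there; it contributes \(1/3\). The element \(2\) does not divide \(3\), so its product is at least \(Q-3\). In fact \(d_{32}=\normQ{3\cdot2^{-1}}=(Q-3)/2\), so its contribution is \(2/(Q-3)\). For a good \(b>3\) in the shells \(Q\pm3\), the final assertion of Lemma~\ref{lem:shell-bookkeeping} forces \(b=4\). Every other good element therefore has product at least \(2Q-3\). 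Bad elements go to the shell \(Q-3\), and their total weight is \(W<16n/7\) by Lemma~\ref{lem:s1-minimum-row}.

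\emph{Reciprocal bound.} Put \(c=4\) if \(4\in A\) and \(c=0\) otherwise, and let \(L=2+W+c\) be the weight placed in the near shell. The remaining weight is at most \(Q-1-3-L\), and it goes to \(2Q-3\). This gives
\[
 H_3\le \frac13+\frac{L}{Q-3}+\frac{Q-4-L}{2Q-3}.
\]
The bound is increasing in \(L\), so we take \(L\le 6+W\). The baseline is then
\[
 \frac13+\frac{6}{Q-3}+\frac{Q-10}{2Q-3},
\]
which is \(\tfrac56+O(1/Q)\). A short exact expansion, done as in the proof of \eqref{eq:B-bound}, should give a bound of the form \(\tfrac56+C/Q\) with an explicit small constant \(C\). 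I expect \(C\approx 9/2\) after absorbing the lower-order terms, valid for \(Q\ge 13\) or so; the few smaller \(Q\) are trivially within range. The bad-weight promotion coefficient is
\[
 \frac1{Q-3}-\frac1{2Q-3}=\frac{Q}{(Q-3)(2Q-3)}.
\]
This is below \(9/(10Q)\) once \(Q\ge9\), by the same \(u<1/3\) argument with \(u=3/Q\); indeed it is even smaller, tending to \(1/(2Q)\), but it suffices to reuse \(9/(10Q)\). Adding the kernel remainder \(8n/(7Q)\) from Proposition~\ref{prop:kernel}, we obtain
\[
 M_3<\frac56+\frac{C+16n/5}{Q},
\]
and a passing row forces \(Q<6C+96n/5\).

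\emph{Cutoff.} We combine this with the distinct-sum bound \eqref{eq:s1-distinct-sum}, \(Q\ge(n+1)(n+2)/2\). As before, the quadratic lower bound overtakes the linear upper bound by \(n=37\): there \(741\) exceeds \(6C+710.4\) provided \(6C<30.6\). This yields \(n\le36\), and then \(Q<6C+691.2\). With \(C\le 4\) we get \(Q\le715\), matching the claimed \(715\).

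\emph{Main obstacle.} The delicate step is getting the baseline constant \(C\) small enough, roughly \(C\le 4\), so that both \(n\le36\) and \(Q\le715\) come out. If the crude placement of \(2\) and \(4\) in the near shell is too lossy, I would refine it in two ways. First, use the exact contribution \(2/(Q-3)\) of the element \(2\). Second, note that \(4\) is good only if \(4\mid Q\pm1\), which always holds for odd \(Q\), and treat \(4\) exactly via \(d_{34}=\normQ{3\cdot4^{-1}}\), which equals \((Q\pm3)/4\). With these refinements the baseline becomes
\[
 \frac13+\frac{2}{Q-3}+\frac{4}{Q-3}+\cdots,
\]
and a polynomial-positivity check in \(t=Q-9\) should give \(C\) near \(4\). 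This check is where I would spend the effort.
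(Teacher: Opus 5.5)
Your argument follows the paper's proof step for step. It uses the same shell analysis of the \(3\)-row: the anchor sits at product \(3\), while the element \(2\) and all bad elements have product at least \(Q-3\). Among good \(b>3\), only \(b=4\) can lie in the first shell. You then use the same increasing function \(F_3(L)=\frac13+\frac{L}{Q-3}+\frac{Q-4-L}{2Q-3}\) at \(L\le W+6\), the same promotion coefficient below \(9/(10Q)\), the same kernel correction \(8n/(7Q)\), and the same distinctness cutoff.

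The gap is that you never prove the baseline inequality, and that is the step fixing the stated constant. Your guess \(C\approx 9/2\) would not suffice. It still gives \(n\le36\), but then only \(Q<27+691.2=718.2\), i.e. \(Q\le718\), not the claimed \(Q\le715\). As written, the proposal therefore does not reach the statement; you yourself note that \(C\le4\) is needed and leave it unverified.

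The missing check is short, and neither refinement you propose (exact treatment of \(2\) or \(4\)) is needed. Exactly,
\[
 F_3(6)-\frac56=-\frac{17}{2(2Q-3)}+\frac6{Q-3}=\frac{7Q+15}{2(Q-3)(2Q-3)},
\]
so the excess is asymptotically \(7/(4Q)\), far below your estimate. Moreover,
\[
 \frac4Q-\Bigl(F_3(6)-\frac56\Bigr)=\frac{9Q^2-87Q+72}{2Q(Q-3)(2Q-3)}.
\]
The numerator equals \(204\) at \(Q=11\) and increases thereafter.

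There are also no small-\(Q\) exceptions to handle separately. Since \(3\in A\) gives \(Q>9\) and \(2\in A\) forces \(Q\) odd, \(Q\ge11\) automatically. This also makes your promotion estimate strict: \(u=3/Q<1/3\) needs \(Q>9\), not \(Q\ge9\).

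With \(C=4\) the passing row gives \(Q<24+96n/5\). The case \(n=37\) fails, since \(734.4<741\), so \(n\le36\) and \(Q<715.2\). This is exactly the paper's conclusion.
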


\begin{proof}
\noindent\emph{The first shell of the \(3\)-row.}
The element \(2\) is good because \(Q\) is odd.  Also, oddness and
\((3,Q)=1\) imply \(Q\equiv1\) or \(5\pmod6\), so \(3\mid Q-1\) or
\(3\mid Q+1\); thus \(3\) is good.  In the \(3\)-row, the anchor has
product \(P_{31}=3\).  Since \(Q\) is odd, \((Q-3)/2\) is a positive
integer below \(Q/2\), and
\(2\cdot(Q-3)/2\equiv-3\pmod Q\).  Thus
\(P_{32}=Q-3\), in the first shell.  By
Lemma~\ref{lem:shell-bookkeeping}, a good \(b>3\) in that shell satisfies
\(b\mid2\) or \(b\mid4\), so only \(b=4\) is possible.

\smallskip
\noindent\emph{The row estimate.}
Because \(Q\) is odd, \(Q\equiv\pm1\pmod4\); thus \(4\) is good whenever
it is selected, and it is not included in the bad weight \(W\).  Let
\(c=4\) if \(4\in A\), and \(c=0\) otherwise.  The total numerator
weight other than \(1,3\) is at most \(Q-4\).  Put
\[
 F_3(L)=\frac13+\frac{Q-4-L}{2Q-3}+\frac{L}{Q-3}.
\]
The function is increasing.  Bad elements, \(b=2\), and the possible
\(b=4\) have total weight \(W+2+c\le W+6\), while every other good
element has product at least \(2Q-3\).  Therefore
\[
 H_3\le F_3(W+2+c)\le F_3(W+6)
 =F_3(6)+W\left(\frac1{Q-3}-\frac1{2Q-3}\right).
\]
Since \(Q>3\cdot3\) and \(Q\) is odd, automatically \(Q\ge11\).  At
\(L=6\),
\[
 \frac13+\frac{Q-10}{2Q-3}+\frac6{Q-3}
 \le\frac56+\frac4Q;
\]
indeed,
\[
 6Q(Q-3)(2Q-3)\left[
 \frac56+\frac4Q-
 \left(\frac13+\frac{Q-10}{2Q-3}+\frac6{Q-3}\right)\right]
 =3(9Q^2-87Q+72)>0.
\]
The last quadratic is positive for \(Q\ge11\): its derivative
\(18Q-87\) is positive there, and its value at \(Q=11\) is \(204\).
The remaining argument converts this reciprocal estimate to the true kernel
mass and then compares the linear upper bound with the distinctness lower
bound.
For the remaining bad weight, the same promotion
coefficient as in Lemma~\ref{lem:s1-x-ge-three}, now with \(x=3\), is less
than \(9/(10Q)\).  Together with \(W<16n/7\) and the kernel correction
\(8n/(7Q)\), this gives
\[
 M_3<\frac56+\frac{4+16n/5}{Q}.
\]
Thus \(Q<24+96n/5\).  At \(n=37\), the distinctness lower bound is
\(741\), whereas this upper bound is \(734.4\); thereafter the lower
bound grows faster.  Hence \(n\le36\), and then \(Q<715.2\), so
\(Q\le715\).
\end{proof}

\subsection{The critical dyadic branch}

It remains to treat
\begin{equation}\label{eq:critical-branch}
 2\in A,\qquad 3\notin A.
\end{equation}
Here and below, \emph{dyadic} means a positive power of two.
If all selected good elements are dyadic, their geometric total
is too small to support a passing system once the bad-weight bound is
included.  Otherwise the least nondyadic good element \(y\) has only dyadic
selected good predecessors.

\begin{lemma}[No nondyadic good element]\label{lem:no-nondyadic}
Assume \eqref{eq:critical-branch}.  If every selected good element is a
power of two and every row passes, then
\[
 n\le17,\qquad Q\le171.
\]
\end{lemma}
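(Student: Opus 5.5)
The total weight is the lever here. Every selected good element is a power of two below \(Q/3\), and all other elements above \(1\) are bad with total weight \(W<16n/7\), so I will bound \(\sum_{a\in A}a\) from above in terms of \(Q\) and \(n\). Lemma~\ref{lem:s1-minimum-row} then forces this sum to be at least \(Q-R>Q-8n/7\). Comparing the two bounds yields the result.

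First, the dyadic part. Let \(G\) be the set of selected good elements; then \(G\subseteq\{2,4,\dots,2^k\}\) with \(2^k<Q/3\). Its total is at most \(2^{k+1}-2<2Q/3-2\). Lemma~\ref{lem:s1-minimum-row} gives
\[
 Q-\frac{8n}{7}<\sum_{a\in A}a\le 1+\Bigl(\frac{2Q}{3}-2\Bigr)+W<\frac{2Q}{3}+\frac{16n}{7},
\]
so \(Q/3<24n/7\), that is, \(Q<72n/7\).

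Second, the count of elements. The bad elements number \(n-|G|\). They are distinct integers at least \(3\), since \(2\) is good and \(3\notin A\) by \eqref{eq:critical-branch}. Hence
\[
 W\ge 3+4+\dots+(n-|G|+2),
\]
which is roughly \((n-|G|)^2/2\). The good count satisfies \(|G|\le\log_2(Q/3)\). So \((n-\log_2 Q)^2/2\lesssim 16n/7\), which bounds \(n-|G|\) by about \(9\). In turn \(Q<72n/7\) with \(n\le |G|+9\) and \(2^{|G|}<Q/3\) gives a self-consistent bound on \(|G|\), and from it bounds on \(n\) and \(Q\).

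This crude version gives roughly \(n\le 16\) or so and \(Q\lesssim 180\). Reaching exactly \(n\le17\), \(Q\le171\) is a matter of bookkeeping. I would state explicit integer inequalities, one for each value of \(|G|=g\): require \(\binom{n-g+5}{2}-3<16n/7\) together with \(Q<72n/7\) and \(2^g<Q/3\). I would then read off the maximal admissible \(n\), here \(17\), and substitute to get \(Q<72\cdot17/7\approx174.9\).

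That is slightly above the target \(171\), so I would sharpen the dyadic estimate. One refinement uses \(2^{k+1}-2\le 2\cdot2^k-2\) with \(2^k\le(Q-1)/3\). Another, which should close the gap, keeps the exact identity \(E=R+\sum\delta_a/d_a\) with the bad contribution exceeding \(W/2\). This gives \(R+W/2<8n/7\) rather than using the two bounds separately. The sum then improves to \(Q-R\le 2Q/3-1+W\) with \(R+W/2<8n/7\), hence \(Q/3<1+W/2+8n/7<1+16n/7\) when \(W\le 16n/7-2R\).

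The main obstacle is this final tightening to the stated constants \(17\) and \(171\). I would first try the combined slack identity above. If that does not reach the target, I would add the single bad-element refinement \(\delta_a/d_a>a/2\) with the exact \(Qa/(2Q-1)\).
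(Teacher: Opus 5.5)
Your first step is the paper's argument. The anchor plus the selected good elements sum to at most \(2L-1\) with \(L=2^k<Q/3\), and then \(Q=\sum_{a\in A}a+R\) with \(R<8n/7\) and \(W<16n/7\). But in your display you discard the \(-1\): from \(\sum_{a\in A}a<2Q/3-1+W\) you pass to \(<2Q/3+16n/7\). Keeping it gives the paper's inequality \(Q<72n/7-3\). The lost \(3\) is exactly the difference between your \(174.9\) and the target, since with \(n\le17\) one gets \(Q<1224/7-3<172\), so \(Q\le171\). The ``main obstacle'' you describe is therefore self-inflicted, and the step you leave speculative is one line. Both refinements you list would also work: \(3\cdot2^k\le Q-1\) gives \(Q<72n/7-5\), and \(R+W/2\le E<8n/7\) gives \(Q/3<R+W-1<16n/7-1\).

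The genuine gap is the bound on \(n\), which you never actually derive. The paper does not count bad elements; it uses distinctness, \(Q\ge\sum_{a\in A}a\ge1+2+\cdots+(n+1)=(n+1)(n+2)/2\). At \(n=18\) this lower bound is \(190\), whereas \(72n/7-3<183\). After that the quadratic lower bound grows by \(n+2\) per step against \(72/7\), so \(n\le17\).

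Your substitute count is not carried out, and its reported outcome is wrong. First, \(3+4+\cdots+(n-g+2)=\binom{n-g+3}{2}-3\), not \(\binom{n-g+5}{2}-3\). Second, these constraints do not leave \(n=17\) admissible. At \(n=17\), \(2^g<Q/3<24n/7\) forces \(g\le5\), so there are at least \(12\) bad elements and \(W\ge3+4+\cdots+14=102>16\cdot17/7\). Carried out correctly, your counting gives \(n\le8\), which is stronger than needed and would prove the lemma. As written, though, the proposal asserts a maximal \(n\) it has not obtained and never reaches \(Q\le171\). The fix is to keep the \(-1\) and use the distinctness bound.
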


\begin{proof}
The largest selected good dyadic element \(L=2^k\) exists because \(2\)
is selected and good.  Let \(G\) be the sum of the anchor \(1\) and all
selected good elements.  The selected good elements form a subset of
\(\{2,4,\ldots,L\}\), so the geometric sum gives
\[
 G\le1+(2+4+\cdots+L)=2L-1.
\]
Subcriticality gives \(L<Q/3\), and hence \(G<2Q/3-1\).  The set \(A\)
is the disjoint union of the anchor, the good elements, and the bad
elements.  Consequently \(\sum A=G+W\) and, by the definition of \(R\),
the exact identity is \(Q=G+W+R\).  Lemma~\ref{lem:s1-minimum-row} now gives
\[
 Q=G+W+R<\frac{2Q}{3}-1+\frac{16n}{7}+\frac{8n}{7},
\]
so \(Q<72n/7-3\).  If \(n=18\), the distinctness lower bound is
\(190\), while this upper bound is \(1296/7-3<183\).  On increasing
\(n\), the lower bound grows by \(n+2\ge20\), whereas the upper bound
grows by only \(72/7\).  Therefore \(n\le17\).  Substitution yields
\(Q<1224/7-3<172\), hence \(Q\le171\).
\end{proof}

\subsection{Harmonic propagation from the least nondyadic element}

Suppose henceforth that the row at \(1\) passes and that a selected
nondyadic good element exists, and let \(y\) be the least one.  Choose
\(v\) maximal such that \(2^v\mid y\), and write
\begin{equation}\label{eq:y-factorization}
 y=2^vm,\qquad m\ge3\text{ odd},\qquad
 J=\lfloor\log_2(m-1)\rfloor.
\end{equation}
Thus \(2^v\Vert y\) and \(m=y/2^v\) is odd.
In the critical branch \eqref{eq:critical-branch}, one has \(y\ge5\).
Write \(\Harm_j=\sum_{k=1}^j1/k\).

The next estimate separates the good elements according to their position
relative to \(y\).  Proper dyadic divisors contribute an exact geometric
sum; smaller nondivisors are few; elements between \(y\) and \(2y\) are
controlled by two complementary harmonic estimates; and elements at least
\(2y\) admit a uniform far-shell baseline.

\begin{lemma}[Dyadic--close harmonic envelope]\label{lem:dyadic-close}
Let
\[
 H_y^{\rm good}:=\frac1y+
 \sum_{\substack{b\in A\setminus\{1,y\}\\ b\text{ is good}}}
 \frac1{\normQ{yb^{-1}}};
\]
thus the superscript includes the anchor \(b=1\) as well as every selected
good element other than \(y\).  This contribution to the \(y\)-row satisfies
\begin{equation}\label{eq:dyadic-close}
 H_y^{\rm good}
 <\frac2m+\frac{10J}{3y}+\frac{2\Harm_{y-1}}{y-1}.
\end{equation}
\end{lemma}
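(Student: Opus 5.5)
The plan is to split the good elements \(b\ne y\) contributing to \(H_y^{\rm good}\) into four classes relative to \(y=2^vm\), as the preamble to the lemma suggests. Each class is bounded through the identity \(1/\normQ{yb^{-1}}=b/P_{yb}\) and the shell list of Lemma~\ref{lem:shell-bookkeeping}. Throughout I use the standing facts of the critical branch: \(Q\) is odd, \(3y<Q\), \(\sum A\le Q\), and every selected good element below \(y\) is dyadic, by minimality of \(y\). I expect the overall shape of the bound to come out as stated. The exact constant \(10/3\) in the middle term is what I am least sure of.

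\emph{Step 1: the anchor and the dyadic divisors.} For \(b=2^j\) with \(0\le j\le v\), including the anchor \(b=1\), we have \(b\mid y\). Then \(d=y/b\) satisfies \(db\equiv y\pmod Q\) and \(d<Q/2\), so \(P_{yb}=y\). These terms therefore sum to at most \((1+2+\cdots+2^v)/y=(2^{v+1}-1)/y<2^{v+1}/y=2/m\), which is the first term of \eqref{eq:dyadic-close}.

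\emph{Step 2: the dyadic nondivisors below \(y\).} These are \(b=2^{v+u}\) with \(2^u<m\), that is \(1\le u\le J\), so there are at most \(J\) of them. For each one I will compute \(\normQ{yb^{-1}}\) exactly from the goodness of \(b\). Write \(Q=bt+\varepsilon\) with \(\varepsilon=\pm1\). Then \(b^{-1}\equiv-\varepsilon t\), and \(2^u\,(yb^{-1})\equiv\mp m\pmod Q\). Hence \(yb^{-1}\equiv(cQ\mp m)/2^u\) for the unique \(c\in\{1,\ldots,2^u-1\}\) that makes this an integer; \(c=0\) is impossible because \(m\) is odd. The cyclic distance is then at least of order \(Q/2^u\), corrected by \(m/2^u\).

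The target is the per-term bound \(1/\normQ{yb^{-1}}\le 10/(3y)\). To get it I will need to pin down which residue \(c\) occurs, using \(Q\equiv\varepsilon\pmod{2^{v+u}}\), and then feed in \(Q>3y\). This step is the main obstacle. The crude estimate \(d\ge(Q-m)/2^u\) is not of size \(y\) when \(v\) is small, so the parity and congruence information on \(c\) must be exploited.

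If this per-term route fails, I would instead combine the first-shell constraint with \(\sum A\le Q\). The first-shell constraint says that \(b\) even and \(y\pm1\) odd (when \(v\ge1\)) push \(P_{yb}\) to at least \(2Q-y\), by Lemma~\ref{lem:shell-bookkeeping}. Combining this with the budget \(\sum A\le Q\) would then give the \(J\)-dependent term collectively rather than per element.

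\emph{Step 3: close elements \(y<b<2y\) and far elements.} Write \(b=y+k\) with \(1\le k\le y-1\), and \(Q=bt+\varepsilon\), where \(t=(Q-\varepsilon)/b\). Since \(y\equiv-k\pmod b\), one gets \(yb^{-1}\equiv\varepsilon(\varepsilon+kt)\). So \(\normQ{yb^{-1}}\) is the distance of \(kt\pm1\) from \(0\pmod Q\), with \(kt\approx kQ/b\) and \(Q-kt\approx yQ/b\). This gives a first estimate \(1/\normQ{yb^{-1}}\lesssim b/(kQ)\). The second, complementary estimate is the budget bound \(b/P_{yb}\): outside the single possible first-shell element \(b=y+1\), we have \(P_{yb}\ge2Q-y\).

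Since distinct \(b\) give distinct \(k\), summing the minimum of the two estimates over \(k\) yields a harmonic sum. After using \(Q>3y\) and \(b<2y\), I aim for at most \(\frac{2}{y-1}\Harm_{y-1}\). Elements \(b\ge2y\) have products in the shells at or beyond \(2Q-y\), apart from a first-shell exception excluded as in Lemma~\ref{lem:shell-bookkeeping}. Their reciprocal contribution is bounded by weight over \(2Q-y\), and I would absorb it into the slack left in the harmonic term (or in Step 2), using \(b\ge2y\) and the total weight \(\le Q\).

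Adding the three pieces gives \eqref{eq:dyadic-close}, with strict inequality coming from Step 1.
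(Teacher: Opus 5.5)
Your proposal has a genuine gap. The clearest failure is in the far range \(b\ge2y\). The shell list only gives \(b\,\normQ{yb^{-1}}\ge2Q-y\), so with far weight up to \(Q-y-p_0\) your bound for these terms is \((Q-y-p_0)/(2Q-y)\), which is nearly \(1/2\) when \(Q\gg y\). Nothing can absorb this. For \(m=3\) and large \(y=3\cdot2^v\), your own Step~1 bound is already \(2/3-1/y\), while the right side of \eqref{eq:dyadic-close} is \(2/3+O(\log y/y)\); so the far terms must total \(O(\log y/y)\).

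The missing fact is the much stronger far baseline \(b\,\normQ{yb^{-1}}\ge y(Q-1)\). Write \(Lb=Q+\varepsilon\) with \(\varepsilon=\pm1\), which is goodness of \(b\). Then \(b^{-1}\equiv\varepsilon L\), so \(\normQ{yb^{-1}}=\normQ{Ly}\), and \(b\ge2y\) forces \(Ly\le(Q+1)/2\). Hence the distance is \(Ly\) itself and the product is \(y(Q+\varepsilon)\). In the boundary case \(Ly=(Q+1)/2\), oddness of \(Q\) forces \(b=2y\), and the product is \(y(Q-1)\). The far terms then total at most \((Q-y-p_0)/(y(Q-1))<1/y\). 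This fits exactly into the slack \(1/y\) you noticed in Step~1, and it is how the paper bounds the divisor and far classes jointly by \(2/m\).

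Steps~2 and~3 are missing the same ingredient: goodness of \(y\) itself, in the form \(Q=Dy+\eta\) with \(\eta=\pm1\) and \(D\ge3\). You never use it.

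In Step~2, the inputs you name (goodness of \(b=2^{v+u}\) and \(Q>3y\)) cannot give a per-term bound of order \(1/y\). For \(y=17\), \(b=16\), \(Q=65\), one has \(16\mid Q-1\) and \(Q>3y\), yet \(yb^{-1}\equiv-3\pmod{65}\), so the term is \(1/3>10/51\). Your first-shell fallback also fails: it yields only \(\sum_X b/(2Q-y)\), which is of constant size, and for \(v=0\) the first shell is not even excluded (here \(16\mid y-1\)). With \(y\) good, combine \(Q=Dy+\eta=L2^{v+j}+\varepsilon\) and write \(m=\rho2^j+r\). This gives \(L=D\rho+h\) with \(1\le h\le D-1\), hence \(Ly\equiv hy-\rho\eta\). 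Both complementary representatives are then at least \(y-\rho-1\ge(y-1)/2\), and \(2/(y-1)<10/(3y)\) is where the constant \(10/3\) comes from.

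In Step~3, your estimate \(b/(kQ)\) summed over \(k\) is of order \(\Harm_{y-1}/D\), which is adequate only when \(D\) is comparable to \(y\). The shell bound \(b/(2Q-y)\) is of constant size, so taking minima does not help when \(D\) is small. There you need the second expression \(\normQ{yb^{-1}}=tk+\varepsilon=(D-t)y+\eta\), obtained by subtracting \(Q=t(y+k)+\varepsilon\) from \(Q=Dy+\eta\). Put \(\ell=D-t\ge1\). Each pair \((\ell,\varepsilon)\) determines at most one \(b\), and \(\normQ{yb^{-1}}\ge\ell(y-1)\), which yields \(2\Harm_{D-1}/(y-1)\). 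You then switch to your harmonic estimate only when \(D\ge y+1\).
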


\begin{proof}
Because \(y\) is good, it divides one of \(Q-1,Q+1\).  Thus choose
\(\eta\in\{\pm1\}\) and an integer \(D\) with \(Q=Dy+\eta\).  Since
\(y<Q/3\), necessarily \(D\ge3\).  Put
\[
 \mathcal S_y=\{1\}\cup
 \{b\in A\setminus\{1,y\}:b\text{ is good}\},
\]
and partition \(\mathcal S_y\) into
\[
 \begin{aligned}
 P&=\{b\in\mathcal S_y:b<y,\ b\mid y\},&
 X&=\{b\in\mathcal S_y:b<y,\ b\nmid y\},\\
 \mathcal C&=\{b\in\mathcal S_y:y<b<2y\},&
 F&=\{b\in\mathcal S_y:b\ge2y\}.
 \end{aligned}
\]
By the choice of \(y\), every element of \(\mathcal S_y\) below \(y\) is
dyadic.  If \(p_0=\sum_{b\in P}b\), then
\[
 p_0\le1+2+4+\cdots+2^v=2^{v+1}-1=\frac{2y}{m}-1.
\]
For \(b\in P\), the integer \(y/b<Q/2\) satisfies
\((y/b)b=y\), so it is the least cyclic representative:
\(d_{yb}=y/b\).  Thus the \(P\)-contribution is exactly \(p_0/y\).

\smallskip
\noindent\emph{Dyadic divisors and the far shell.}
For \(b\in F\), goodness permits a sign \(\varepsilon\in\{\pm1\}\)
and a positive integer \(L\) with
\(Lb=Q+\varepsilon\).  Then
\(b^{-1}\equiv\varepsilon L\pmod Q\) and
\(d_{yb}=\normQ{Ly}\).  Since \(b\ge2y\),
\(Ly\le(Q+1)/2\).  If \(Ly\le Q/2\), then
\(bd_{yb}=y(Q+\varepsilon)\ge y(Q-1)\).  If \(Ly>Q/2\), integrality
and the oddness of \(Q\) (because \(2\in A\) is a unit) force
\(Ly=(Q+1)/2\).  Hence
\[
 b=\frac{Q+\varepsilon}{L}
   =2y\frac{Q+\varepsilon}{Q+1}.
\]
If \(\varepsilon=-1\), this is strictly smaller than \(2y\), contrary to
\(b\in F\).  Therefore \(\varepsilon=1\) and \(b=2y\); hence
\(bd_{yb}=b(Q-1)/2=y(Q-1)\) again.  Thus
\begin{equation}\label{eq:far-baseline}
 \frac1{d_{yb}}\le\frac{b}{y(Q-1)}\qquad(b\in F).
\end{equation}
The total numerator weight available to \(F\) is at most
\(Q-y-p_0\).  Using this budget and the fact that
\(p/y+(Q-y-p)/(y(Q-1))\) increases with \(p\), we obtain
\begin{equation}\label{eq:P-F}
 \sum_{b\in P\cup F}\frac1{d_{yb}}
 \le\frac{p_0}{y}+\frac{Q-y-p_0}{y(Q-1)}\le\frac2m.
\end{equation}
For the final inequality, the function
\(\phi(p)=p/y+(Q-y-p)/[y(Q-1)]\) is increasing, and at
\(p=2y/m-1\) its gap from \(2/m\) is
\[
 \frac2m-\phi\!\left(\frac{2y}{m}-1\right)
 =\frac{y+(2y/m-1)-1}{y(Q-1)}>0.
\]

\smallskip
\noindent\emph{Smaller dyadic nondivisors.}
Every \(b\in X\) is \(b=2^{v+j}\), \(1\le j\le J\).  Choose
\(\varepsilon\in\{\pm1\}\) and \(L\) with
\(Q=L2^{v+j}+\varepsilon\), and write
\(m=\rho2^j+r\), \(0<r<2^j\).  From
\[
 D2^vm+\eta=L2^{v+j}+\varepsilon
\]
we get the integer
\[
 c=\frac{\eta-\varepsilon}{2^v},\qquad
 h=\frac{Dr+c}{2^j},\qquad 1\le h\le D-1.
\]
The preceding equality shows that \(\eta-\varepsilon\) is divisible by
\(2^v\), so \(c\) is an integer.  After division by \(2^v\), it reads
\[
 L2^j=Dm+c=D\rho2^j+Dr+c,
\]
so \(h=(Dr+c)/2^j=L-D\rho\) is an integer.  Since \(|c|\le2\),
\(D\ge3\), and \(1\le r<2^j\),
\[
 Dr+c\ge D-2\ge1,
 \qquad
 Dr+c\le D(2^j-1)+2<D2^j.
\]
Thus \(1\le h\le D-1\), and \(L=D\rho+h\).  Since
\(2^{v+j}L\equiv-\varepsilon\pmod Q\), one has
\((2^{v+j})^{-1}\equiv-\varepsilon L\pmod Q\), and the distance is
\(d_{yb}=\normQ{Ly}\).  Moreover,
\[
 Ly=(D\rho+h)y=\rho(Q-\eta)+hy
 \equiv hy-\rho\eta\pmod Q.
\]
The following two integers sum to \(Q\), and hence are the complementary
representatives of this residue:
\[
 hy-\rho\eta,\qquad (D-h)y+(\rho+1)\eta.
\]
The first is at least \(y-\rho\), while the second is at least
\(y-\rho-1\).  Thus both are positive and at least \(y-\rho-1\).  Since
\(2\rho\le m-1\le y-1\), we have
\(\rho\le(y-1)/2\), and therefore
\[
 d_{yb}\ge y-\rho-1\ge\frac{y-1}{2},
 \qquad \frac1{d_{yb}}\le\frac2{y-1}<\frac{10}{3y}.
\]
There are at most \(J\) such dyadic elements, giving the middle term of
\eqref{eq:dyadic-close}.

\smallskip
\noindent\emph{The close interval.}
Finally, let \(b=y+k\in\mathcal C\), and choose
\(\varepsilon\in\{\pm1\}\), \(t\ge3\), with
\(Q=t(y+k)+\varepsilon\).  The bound \(t\ge3\) follows from
\(Q>3b=3(y+k)\): if \(\varepsilon=1\), then
\(t=(Q-1)/b>3-1/b\), and if \(\varepsilon=-1\), the bound is even
larger.  Direct reduction gives
\[
 d_{yb}=tk+\varepsilon=(D-t)y+\eta.
\]
Indeed, subtracting \(Q=t(y+k)+\varepsilon\) from
\(Q=Dy+\eta\) gives the equality of the two displayed expressions.
Moreover, \(Q-d_{yb}=ty\), and
\(ty>d_{yb}\) is equivalent to \(t(y-k)>\varepsilon\), which holds
because \(1\le k<y\) and \(t\ge3\).  Hence the displayed positive
integer is the least cyclic representative.  Put \(\ell=D-t\).  The
identity \(d_{yb}=\ell y+\eta>0\) first gives \(\ell\ge0\).  Also
\(d_{yb}=tk+\varepsilon\ge3\cdot1-1=2\), whereas \(\ell=0\) would give
\(d_{yb}=\eta\le1\).  Hence \(\ell\ge1\).
For fixed \(\ell\) and \(\varepsilon\), the integer \(t=D-\ell\) is
fixed and \(Q=t(y+k)+\varepsilon\) determines at most one \(k\), hence at
most one \(b\).  Since \(d_{yb}=\ell y+\eta\ge\ell(y-1)\), summation over
the two signs gives
\begin{equation}\label{eq:C-first}
 \sum_{b\in\mathcal C}\frac1{d_{yb}}
 \le\frac{2\Harm_{D-1}}{y-1}.
\end{equation}
For a second estimate, direct algebra gives
\[
 d_{yb}>\frac{k(D-2)}2.
\]
After multiplying the desired inequality by the positive number
\(2(y+k)\), direct substitution gives
\[
 2(y+k)\left(d_{yb}-\frac{k(D-2)}2\right)
 =Dk(y-k)+2k(y+k)+2k\eta+2\varepsilon y.
\]
Since \(\eta,\varepsilon\ge-1\), the right side is at least
\[
 Dk(y-k)+2(k-1)(y+k)>0.
\]
Therefore
\begin{equation}\label{eq:C-second}
 \sum_{b\in\mathcal C}\frac1{d_{yb}}
 <\frac{2\Harm_{y-1}}{D-2}.
\end{equation}
If \(D\le y\), then \(\Harm_{D-1}\le\Harm_{y-1}\), so use
\eqref{eq:C-first}.  If \(D\ge y+1\), then \(D-2\ge y-1\), so use
\eqref{eq:C-second}.  In either case the close contribution is at most
\(2\Harm_{y-1}/(y-1)\).  Together with
\eqref{eq:P-F}, this proves \eqref{eq:dyadic-close}.
\end{proof}

A bad element cannot divide the good element \(y\), so its \(y\)-row
product is at least \(Q-y\).  Lemma~\ref{lem:s1-minimum-row} and
Proposition~\ref{prop:kernel} give
\begin{equation}\label{eq:y-full-envelope}
 M_y<H_y^{\rm good}+\frac{16n}{7(Q-y)}+\frac{8n}{7Q}
 <H_y^{\rm good}+\frac{32n}{7Q}.
\end{equation}

This turns the harmonic estimate into a denominator cutoff.  We use one
estimate when \(y\) is large and a sharper, \(Q\)-dependent estimate on the
remaining bounded range of \(y\).  The thresholds are chosen for these two
regimes: when \(y\ge97\), the good contribution falls below \(4/5\), while
for \(5\le y\le96\) and \(Q\ge5608\), it falls below \(32/35\).  In each
case the resulting linear bound on \(Q\), combined with the distinctness
bound, contradicts \(Q\ge5608\).

\begin{lemma}[Analytic cutoff in the critical branch]
\label{lem:critical-cutoff}
If \eqref{eq:critical-branch} holds, a selected nondyadic good element
exists, and every row passes, then \(Q\le5607\).
\end{lemma}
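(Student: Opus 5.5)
Assume, for contradiction, that \(Q\ge5608\) and every row passes. I would combine Lemma~\ref{lem:dyadic-close} with \eqref{eq:y-full-envelope}, which gives
\[
 1\le M_y<\frac2m+\frac{10J}{3y}+\frac{2\Harm_{y-1}}{y-1}+\frac{32n}{7Q}.
\]
This yields a linear upper bound \(Q<32n/(7(1-\Gamma))\), where \(\Gamma\) is any proved upper bound for \(H_y^{\rm good}\). The bound will then be played against the distinctness estimate \eqref{eq:s1-distinct-sum}, \(Q\ge(n+1)(n+2)/2\), in the same way as in Lemmas~\ref{lem:s1-x-ge-three} and~\ref{lem:s1-three}. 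The split is by the size of \(y\), following the two regimes announced before the statement.

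\emph{Large \(y\ge97\).} Since \(m\ge3\), \(2/m\le2/3\); I would then bound the two remaining terms. We have \(J\le\log_2 y\), so \(10J/(3y)\le 10\log_2 y/(3y)\), which is decreasing for \(y\ge97\). Similarly \(2\Harm_{y-1}/(y-1)\le 2(1+\ln(y-1))/(y-1)\) is decreasing. At \(y=97\) their sum is roughly \(0.229+0.115<0.35\). That is too large with \(2/m=2/3\), so the large-\(y\) case needs refining. When \(m=3\), we have \(J=1\), so the middle term is at most \(10/(3y)\le0.035\). The \(m=3\) case then gives about \(2/3+0.035+0.115<0.82\). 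This is still above \(4/5\), so I would use the sharper \(2/(y-1)\) in place of \(10/(3y)\) and the true \(P\cup F\) bound \(p_0/y+(Q-y-p_0)/(y(Q-1))\), or simply raise the threshold for \(y\) if necessary. For \(m\ge5\), \(2/m\le2/5\) leaves ample room. The target is \(H_y^{\rm good}<4/5\), which gives \(Q<160n/7\). Against \((n+1)(n+2)/2\) this forces \(n\le 42\) or so, hence \(Q<\) about \(1000<5608\), a contradiction.

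\emph{Bounded range \(5\le y\le96\), \(Q\ge5608\).} Here I would return inside the proof of Lemma~\ref{lem:dyadic-close} and keep the \(Q\)-dependence. The \(P\cup F\) contribution is \(\phi(p_0)\), where \(\phi(p)=p/y+(Q-y-p)/(y(Q-1))\), and \(\phi(p_0)\le(2^{v+1}-1)/y+1/y\). In the close interval, \(D\approx Q/y\ge 58\) is large. So \eqref{eq:C-second} gives at most \(2\Harm_{y-1}/(D-2)\), which is tiny, of order \(2y\Harm_{y-1}/Q\). The \(X\)-terms are at most \(J\cdot 2/(y-1)\). For each of the finitely many \(y\in[5,96]\) (with \(y\) nondyadic, so \(m\ge3\)), I would check that
\[
 \frac{2^{v+1}}y+\frac{2J}{y-1}+\frac{2y\Harm_{y-1}}{Q-2y}<\frac{32}{35}.
\]
The worst case appears to be \(y=6\), where \(v=1\), \(m=3\), \(J=1\). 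This gives \(4/6+2/5\), which already exceeds one. So for small \(y\) I would need a sharper bound on the \(X\)-distances, using the exact representative \(hy-\rho\eta\) rather than \((y-1)/2\). Equivalently, I would note that for \(y=6\) the only possible \(X\)-element is \(4\), with distance about \(y\cdot h\) for \(h\ge1\), giving roughly \(1/5\). This is the step I expect to be the main obstacle: getting a uniform \(32/35\) for the smallest \(y\), possibly via a finite exact table of \(\rho,h\) values for each residue pattern of \(Q\bmod 2^{v+J}\).

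Granting \(\Gamma=32/35\), the passing condition gives \(Q<32n/(7\cdot 3/35)=160n/3\). Combined with \(Q\ge(n+1)(n+2)/2\), this gives \(n\le 104\) and \(Q<5600\ldots\); the thresholds appear tuned so that the result is \(\le5607\), contradicting \(Q\ge5608\). Finally I would verify the arithmetic endpoint: at \(n=105\) the lower bound is \(5671\), while \(160\cdot105/3=5600\). The lower bound wins and grows faster thereafter, so \(n\le104\) and \(Q<5546.7\), completing the contradiction.
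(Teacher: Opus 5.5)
Your outer structure matches the paper's proof. You split into $y\ge97$ and $5\le y\le96$, aim for $H_y^{\rm good}<4/5$ and $H_y^{\rm good}<32/35$ respectively, and play the resulting bounds $Q<160n/7$ and $Q<160n/3$ against \eqref{eq:s1-distinct-sum}.

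However, the step you flag as the main obstacle is a genuine gap, and the proposal does not close it. In the bounded range you control each $b\in X$ only by $Q$-independent distance bounds, and these are too weak. Even the sharper form $d_{yb}\ge y-\rho-1$ from the proof of Lemma~\ref{lem:dyadic-close}, which is all that $h\ge1$ yields, fails at $y=5$: there $X\subseteq\{2,4\}$, and the total is $2/5+1/2+1/3>1$.

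The missing idea is the product-shell bound. An element $b\in X$ does not divide $y$, so its product $P_{yb}=b\,d_{yb}$ cannot be the first shell value $y$. Lemma~\ref{lem:shell-bookkeeping} then forces $P_{yb}\ge Q-y$. Hence
\[
 \sum_{b\in X}\frac1{d_{yb}}=\sum_{b\in X}\frac b{P_{yb}}
 \le\frac1{Q-y}\sum_{b\in X}b<\frac{2y}{Q-y}\le\frac{192}{Q-96},
\]
because the powers of two below $y$ sum to less than $2y$. This is exactly the device already used for the bad elements in \eqref{eq:y-full-envelope}. Combine it with $2/m\le2/3$ for $P\cup F$ and the close bound $2y\Harm_{y-1}/(Q-2y-1)\le1152/(Q-193)$. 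Evaluation at $Q=5608$ then gives $H_y^{\rm good}<32/35$, and your final arithmetic applies.

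One correction on the close bound: the denominator must be $Q-2y-1$, not $Q-2y$. One only has $D-2\ge(Q-2y-1)/y$, since $\eta$ may equal $+1$.

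The large range also has a loose end. For $m=3$ you stop at an estimate above $4/5$ and only list possible remedies. The needed observation is that $m=3$ means $y=3\cdot2^v$, so $y\ge97$ already forces $y\ge192$. Then $J=1$, and $\Harm_{191}<8$ gives $H_y^{\rm good}<2/3+5/288+16/191<4/5$. Your $m\ge5$ estimate and the concluding comparisons of $n$ with $Q$ are fine.
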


\begin{proof}
Assume \(Q\ge5608\).

\smallskip
\noindent\emph{The large range of \(y\).}
First suppose \(y\ge97\).  If \(m\ge5\), then
\(J/y\le6/97\): this is immediate when \(J\le6\), while for
\(J\ge7\) it follows from \(m\ge2^J+1\) and the decrease of
\(J/(2^J+1)\); its value at \(J=7\) is \(7/129<6/97\).
The stated decrease for \(J\ge2\) follows after cross multiplication from
\((J-1)2^J-1>0\).
Also \(\Harm_j/j\) decreases, since
\(\Harm_{j+1}/(j+1)<\Harm_j/j\) is equivalent to
\(j/(j+1)<\Harm_j\).  Grouping the terms with denominators in
\([2^k,2^{k+1}-1]\) gives \(\Harm_{96}<7<8\).
Lemma~\ref{lem:dyadic-close} yields
\[
 H_y^{\rm good}<\frac25+\frac{20}{97}+\frac16
 =\frac{2249}{2910}<\frac45.
\]
If \(m=3\), then \(y\ge192\), \(J=1\), and the same dyadic grouping
gives \(\Harm_{191}<8\), whence
\[
 H_y^{\rm good}<\frac23+\frac5{288}+\frac{16}{191}<\frac45.
\]
Equation~\eqref{eq:y-full-envelope} would force
\(Q<160n/7\).  Together with \eqref{eq:s1-distinct-sum}, this gives
\(n\le42\): at \(n=43\), the distinctness lower bound is \(990\),
while \(160n/7<983\), and thereafter the quadratic bound grows faster.
Substitution then gives \(Q<960\), hence \(Q\le959\), a contradiction.

\smallskip
\noindent\emph{The bounded range of \(y\).}
It remains that \(5\le y\le96\).  For \(b\in X\), the product
\(d_{yb}b\) is at least \(Q-y\), and the sum of all possible dyadic
\(b<y\) is less than \(2y\).  Thus
\[
 \sum_{b\in X}\frac1{d_{yb}}
 <\frac{2y}{Q-y}\le\frac{192}{Q-96}.
\]
Since
\(D-2=(Q-\eta-2y)/y\ge(Q-2y-1)/y\), the second close estimate in the
proof of Lemma~\ref{lem:dyadic-close} gives
\[
 \sum_{b\in\mathcal C}\frac1{d_{yb}}
 <\frac{2y\Harm_{y-1}}{Q-2y-1}
 \le\frac{1152}{Q-193}.
\]
To justify the last numerical bound, write
\[
 \Harm_{95}
 =\Harm_3+
  \sum_{k=2}^{5}\sum_{r=2^k}^{2^{k+1}-1}\frac1r
  +\sum_{r=64}^{95}\frac1r.
\]
In each full dyadic block, split the \(2^k\) terms into equal halves.  The
first half contributes at most \(2^{k-1}/2^k=1/2\), while every denominator
in the second half is at least \(3\cdot2^{k-1}\), so that half contributes
at most \(1/3\).  The final \(32\) terms contribute at most \(32/64=1/2\).
Consequently
\[
 \Harm_{95}
 \le\frac{11}{6}+4\left(\frac12+\frac13\right)+\frac12
 =\frac{17}{3}<6.
\]
By \eqref{eq:P-F},
\(\sum_{b\in P\cup F}1/d_{yb}\le2/m\le2/3\); evaluation at
\(Q=5608\) gives
\[
 H_y^{\rm good}
 <\frac23+\frac{192}{5512}+\frac{1152}{5415}
 =\frac{3410978}{3730935}<\frac{32}{35}.
\]
The final comparison is exact; its positive gap is
\[
 \frac{32}{35}-
 \left(\frac23+\frac{192}{5512}+\frac{1152}{5415}\right)
 =\frac{1138}{26116545}>0.
\]
Now \eqref{eq:y-full-envelope} and a passing row imply
\(Q<160n/3\).  Combining this with \eqref{eq:s1-distinct-sum} first
gives \(n\le103\): at \(n=104\), the quadratic lower bound is \(5565\),
whereas \(160n/3<5547\), and again the lower bound subsequently grows
faster.  It follows that
\(Q<16480/3<5494\), again a contradiction.
\end{proof}

\subsection{Exact rational refinement}

The exact rational envelope below sharpens the analytic cutoff from
\(Q\le5607\) to \(Q\le289\), placing the critical branch inside the global
finite search.
For an odd integer \(Q\ge3\), define
\[
 \mathcal G_Q=\{b\in\Z:2\le b<Q/3,\ b\mid Q-1\text{ or }b\mid Q+1\}.
\]
Every \(b\in\mathcal G_Q\) is coprime to \(Q\).  In the formulas below,
\(b^{-1}\) denotes the unique representative in \(\{1,\ldots,Q-1\}\)
satisfying \(bb^{-1}\equiv1\pmod Q\).
Let \(n_Q\) be the largest integer satisfying
\((n_Q+1)(n_Q+2)/2\le Q\).  For nondyadic \(y\in\mathcal G_Q\), write
\(2^v\Vert y\), put \(p=2^{v+1}-1\), and set
\[
 \mathcal E_{Q,y}=\{b\in\mathcal G_Q\setminus\{y\}:
 [b<y,\ b\text{ dyadic},\ b\nmid y]\text{ or }y<b<2y\}.
\]
Define
\begin{align}
 U_{\rm g}(Q,y)
 &=\frac py+\frac{Q-y-p}{y(Q-1)}
 +\sum_{b\in\mathcal E_{Q,y}}
 \max\left\{0,\frac1{\normQ{yb^{-1}}}
 -\frac{b}{y(Q-1)}\right\},\label{eq:Ug}\\
 U(Q,y)&=U_{\rm g}(Q,y)
 +\frac{16n_Q}{7(Q-y)}+\frac{8n_Q}{7Q}.
 \label{eq:U-total}
\end{align}

The terms in these envelopes have distinct roles.  The first two terms of
\(U_{\rm g}(Q,y)\) combine the maximal possible dyadic-divisor contribution
with the far-shell baseline from
\eqref{eq:far-baseline}.  The finite sum then adds every positive promotion
above that baseline that can arise from an exceptional candidate in
\(\mathcal E_{Q,y}\).  Thus \(U_{\rm g}(Q,y)\) is a relaxed upper bound for
the reciprocal contribution of all good elements.  The two additional terms
in \(U(Q,y)\) bound, respectively, the contribution of the bad elements and
the error incurred when \(1/d\) is replaced by the inverse-sine kernel
\(K_Q(d)\).  The next lemma checks that every relaxation is in the
upper-bound direction and retains the strict inequality needed below.

\begin{lemma}[Validity of the refined envelope]\label{lem:U-valid}
If the row at \(1\) passes and \(y\) is the least selected
nondyadic good element, then
\[
 M_y<U(Q,y).
\]
\end{lemma}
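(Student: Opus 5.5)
We split the \(y\)-row according to the same partition used in Lemma~\ref{lem:dyadic-close}. Write \(M_y=H_y+(M_y-H_y)\), where \(H_y=\sum_{b\ne y}1/\normQ{yb^{-1}}\) is the reciprocal row. Proposition~\ref{prop:kernel} bounds the kernel error strictly: since \(\normQ{yb^{-1}}\le Q/2\), we get \(M_y-H_y<8n/(7Q)\). We then split \(H_y=H_y^{\rm good}+H_y^{\rm bad}\).

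\emph{Bad part.} A bad \(b\) cannot divide \(y\), since divisors of good elements are good. Hence its product \(P_{yb}\) is at least \(Q-y\), by Lemma~\ref{lem:shell-bookkeeping}. Lemma~\ref{lem:shell-bookkeeping} together with \(W<16n/7\) from Lemma~\ref{lem:s1-minimum-row} then gives \(H_y^{\rm bad}\le W/(Q-y)<16n/(7(Q-y))\).

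Finally, \(n\le n_Q\) by the distinctness bound \eqref{eq:s1-distinct-sum}. This replaces \(n\) by \(n_Q\) in both error terms, and it does so in the upper direction.

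\emph{Good part.} Every selected good \(b\ne y\) lies in \(\mathcal G_Q\). This uses that \(Q\) is odd, because \(2\in A\) is a unit, so \(2\mid Q\pm1\). We use the partition \(\mathcal S_y=P\cup X\cup\mathcal C\cup F\).

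For \(b\in P\), the terms are exact, \(1/d_{yb}=b/y\). For every other good \(b\) we write
\[
 \frac1{d_{yb}}\le\frac{b}{y(Q-1)}+\max\Bigl\{0,\frac1{d_{yb}}-\frac b{y(Q-1)}\Bigr\}.
\]
For \(b\in F\) the max vanishes by \eqref{eq:far-baseline}. The elements of \(X\) and \(\mathcal C\) are dyadic nondivisors below \(y\) (by minimality of \(y\)) or lie in \((y,2y)\), so they belong to \(\mathcal E_{Q,y}\). Summing over the selected ones and enlarging to all of \(\mathcal E_{Q,y}\) only adds nonnegative terms.

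The baseline terms \(b/(y(Q-1))\), summed over selected \(b\notin P\cup\{1,y\}\), are at most \((Q-y-p_0)/(y(Q-1))\), by the budget \(\sum A\le Q\). Here we must take care that the anchor \(1\) is counted in \(P\). The total is then at most \(\phi(p_0)\), and the monotonicity of \(\phi\) from \eqref{eq:P-F}, with \(p_0\le p=2^{v+1}-1\), gives \(\phi(p)\). This yields \(H_y^{\rm good}\le U_{\rm g}(Q,y)\).

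\emph{Main obstacle.} The delicate point is the bookkeeping of the baseline budget. The exceptional elements of \(X\cup\mathcal C\) are charged both a baseline and a promotion, so their numerator weight must be included in \(Q-y-p_0\) rather than dropped. We also have to ensure that the anchor and the dyadic divisors are not double counted, and that strictness survives. Strictness comes from the kernel envelope, which is strict because the passing row gives \(n\ge1\).

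Adding the three bounds gives \(M_y<U(Q,y)\).
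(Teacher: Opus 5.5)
Your proposal is correct and follows essentially the same route as the paper. You split the \(y\)-row into exact dyadic-divisor terms, a far-shell baseline charged against the budget \(Q-y-p_0\) (increasing in \(p_0\), hence evaluated at \(p\)), nonnegative promotions over all of \(\mathcal E_{Q,y}\), bad weight on the shell \(P_{yb}\ge Q-y\), and the strict kernel error, and you finish with \(n\le n_Q\). One minor point is misattributed: oddness of \(Q\) is not what places good elements in \(\mathcal G_Q\); it is needed for the far-shell baseline and for \(U(Q,y)\) to be defined, and it is available in the critical branch.
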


\begin{proof}
Let \(p_0\le p\) be the sum of the selected dyadic divisors of \(y\).
Give every other selected good element the baseline
\(b/[y(Q-1)]\), and add its positive excess over that baseline whenever
it lies in the exceptional set \(\mathcal E_{Q,y}\).  By minimality of
\(y\), every remaining selected good element, other than the dyadic
divisors and the \(X\)/close exceptional elements, lies in \(F\); hence
\eqref{eq:far-baseline} applies to it.
Writing \(\mathcal E^{\rm sel}_{Q,y}\) for the selected exceptional
elements, the preceding assignments give the explicit inequality
\[
 H_y^{\rm good}\le
 \frac{p_0}{y}+\frac{Q-y-p_0}{y(Q-1)}
 +\sum_{b\in\mathcal E^{\rm sel}_{Q,y}}
 \max\left\{0,\frac1{d_{yb}}-\frac{b}{y(Q-1)}\right\}.
\]
Here the total remaining numerator budget is at most \(Q-y-p_0\).  The
baseline term may therefore use that entire budget, even if some of it is
actually occupied by bad elements or simply left unused; this only enlarges the
right side.  Moreover,
\[
 \frac{p_0}{y}+\frac{Q-y-p_0}{y(Q-1)}
\]
increases with \(p_0\), because the coefficient of \(p_0\) is
\((Q-2)/[y(Q-1)]>0\).  Replacing \(p_0\) by its upper bound \(p\) can
therefore only increase the estimate.  Likewise, adjoining every positive
promotion from \(\mathcal E_{Q,y}\), including promotions for unselected
candidates, is a relaxation in the upper-bound direction.  These two
relaxations prove \(H_y^{\rm good}\le U_{\rm g}(Q,y)\).
The last two terms in \eqref{eq:U-total} are respectively the bad-weight
and full sine-kernel bounds.  The former is strict because
\(W<16n/7\), and the latter is strict by
Proposition~\ref{prop:kernel}.  They therefore give the strict bound with
\(n\) in place of \(n_Q\).  Finally, \(n\le n_Q\) follows from the
distinctness inequality \((n+1)(n+2)/2\le Q\).  Thus replacing \(n\) by
\(n_Q\) again enlarges the upper bound, and \(M_y<U(Q,y)\) follows.
\end{proof}

Thus the critical branch can survive only when the exact rational envelope
is at least one.  The corresponding finite calculation leaves no denominator
above \(289\).

\begin{proposition}[Critical finite refinement]\label{prop:critical-289}
Under the hypotheses of Theorem~\ref{thm:SC}, suppose that
\(\min A=1\), \(2\in A\), \(3\notin A\), and that \(A\) contains a
nondyadic good element.  If every row passes, then \(Q\le289\).
\end{proposition}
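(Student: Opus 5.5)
The plan is to combine the analytic cutoff of Lemma~\ref{lem:critical-cutoff} with the exact envelope of Lemma~\ref{lem:U-valid}, reducing the proposition to a finite exact computation over \(290\le Q\le5607\). Assume the hypotheses and that every row passes. By Lemma~\ref{lem:critical-cutoff} we already have \(Q\le5607\). Since \(2\in A\) is a unit, \(Q\) is odd. The least selected nondyadic good element \(y\) lies in \(\mathcal G_Q\) and satisfies \(5\le y<Q/3\). Because the row at \(1\) passes, Lemma~\ref{lem:U-valid} gives \(1\le M_y<U(Q,y)\). It therefore suffices to show that
\[
 U(Q,y)<1\qquad\text{for every odd }Q\text{ with }290\le Q\le5607\text{ and every nondyadic }y\in\mathcal G_Q .
\]
Then the passing row at \(y\) is impossible, forcing \(Q\le289\).

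The key steps, in order, are as follows. First, for each odd \(Q\) in range, enumerate \(\mathcal G_Q\) from the divisors of \(Q\pm1\) below \(Q/3\), and compute \(n_Q\). Second, for each nondyadic \(y\in\mathcal G_Q\), form \(p=2^{v+1}-1\) and the exceptional set \(\mathcal E_{Q,y}\). Third, compute each \(\normQ{yb^{-1}}\) by integer modular inversion, so that \(U_{\rm g}(Q,y)\) and \(U(Q,y)\) are exact rationals. Fourth, compare \(U(Q,y)\) with \(1\) by cross-multiplication. No trigonometry enters, because the kernel has already been replaced by \(1/d\) plus the strict rational error \(8n_Q/(7Q)\) from Proposition~\ref{prop:kernel}. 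This is precisely what makes the verification exact. I would record the computation as a finite verification statement in Appendix~\ref{app:verification}, parallel to Proposition~\ref{prop:s-positive-finite}.

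To keep the computation honest and small, I would also exploit monotonicity where possible. For \(y\ge97\), the proof of Lemma~\ref{lem:critical-cutoff} already yields \(H_y^{\rm good}<4/5\) uniformly, so \(U<1\) whenever \(32n_Q/(7Q)<1/5\). This condition holds once \(n_Q\approx\sqrt{2Q}\) is small relative to \(Q\), which is true throughout the range. Hence only \(5\le y\le96\) genuinely requires the finite sweep, with the exceptional sums computed exactly. Since \(|\mathcal E_{Q,y}|\le J+y\), each evaluation is cheap.

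The main obstacle is the lower end of the range, near \(Q\approx290\). There the error terms \(16n_Q/(7(Q-y))+8n_Q/(7Q)\) are of size roughly \(0.2\)--\(0.3\), and the dyadic term \(p/y\approx2/m\) can reach \(2/3\) when \(m=3\). The margin below \(1\) may then be thin, or may even fail, for specific \((Q,y)\) where a close element \(b\) gives a small \(\normQ{yb^{-1}}\). If the plain envelope fails at some pairs, my first remedy would be to sharpen the bad-weight term. The refinement would replace \(16n_Q/7\) by the actual residual budget \(Q-y-p_0\) minus the good weight forced in \(\mathcal E\). Alternatively, it would use the exact bound \(\frac{Q}{(Q-y)(2Q-y)}\)-type promotion, as in Lemma~\ref{lem:s1-x-ge-three}, instead of \(1/(Q-y)\). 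The threshold \(289\) in the statement suggests that the authors' envelope succeeds exactly from \(290\) upward, so I expect the unrefined \(U\) to suffice once it is evaluated exactly.
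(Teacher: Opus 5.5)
Your reduction is the paper's proof. Lemma~\ref{lem:critical-cutoff} gives \(Q\le5607\). Since \(3\notin A\), the least selected nondyadic good element satisfies \(y\ge5\) and lies in \(\mathcal G_Q\). Lemma~\ref{lem:U-valid} gives \(1\le M_y<U(Q,y)\), and an exact rational evaluation shows \(U(Q,y)<1\) for every eligible pair with \(Q\ge290\).

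That evaluation also settles your concern about the low end: the unrefined envelope suffices. The \(155\) pairs it fails to exclude all have \(Q\le289\), which is where the constant comes from. For \(Q\ge290\) the smallest exclusion margin is \(1140922873693/64523567545080\approx0.0177\), attained at \((Q,y)=(311,24)\). None of your fallback refinements is needed.

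One side claim is wrong, though, and it would leave cases unchecked if you relied on it. Your shortcut for \(y\ge97\) requires \(32n_Q/(7Q)<1/5\), that is, \(160n_Q<7Q\). Since \(n_Q\approx\sqrt{2Q}\), this inequality only takes hold near \(Q\approx10^3\), and it fails for every \(Q\le937\). At \(Q=290\), for example, \(n_Q=22\) and \(32n_Q/(7Q)=704/2030>0.34\). So pairs with \(y\ge97\) and \(Q\) below about \(960\) must also be evaluated exactly. The paper does not split at \(97\); it sweeps all \(39{,}232\) eligible pairs with odd \(Q\le5607\) and nondyadic \(y\ge5\).

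The shortcut also conflates two quantities. The bound \(4/5\) in the proof of Lemma~\ref{lem:critical-cutoff} concerns the selected contribution \(H_y^{\rm good}\). By contrast, \(U(Q,y)\) is built from \(U_{\rm g}(Q,y)\), which adds promotions for unselected candidates in \(\mathcal E_{Q,y}\). The transfer is valid, because the estimates of Lemma~\ref{lem:dyadic-close} use only goodness, dyadicity of the small candidates, and \(b<Q/3\). But you would have to say so explicitly.
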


\begin{proof}
Lemma~\ref{lem:critical-cutoff} gives \(Q\le5607\).  Let \(y\) be the
least selected nondyadic good element.  Since \(3\notin A\), while \(2\)
and \(4\) are dyadic, we have \(y\ge5\).  The subcriticality hypothesis
gives \(3y<Q\), and goodness gives \(y\mid Q-1\) or \(y\mid Q+1\).
Moreover, \(Q\) is odd because \(2\in A\subset\U\); in particular,
\(Q\ge17\).  Thus the actual pair \((Q,y)\) occurs among the pairs
enumerated in Appendix~\ref{app:critical-verifier}.  That appendix
evaluates the rational envelope \(U(Q,y)\) exactly and proves that every
eligible pair with \(Q\ge290\) satisfies \(U(Q,y)<1\).  On the other hand,
a passing row has \(M_y\ge1\), whereas Lemma~\ref{lem:U-valid} gives
\(M_y<U(Q,y)\).  Hence \(Q\ge290\) is impossible.
\end{proof}

\subsection{Global finite reduction and completion of subcritical rigidity}

All four \(s=1\) branches lie in \(4\le Q\le715\).  For each
denominator, membership in \(A\) becomes a binary selection variable.  The
numerator budget and the all-passing row inequalities form a finite integer
system, and an exact branch-and-knapsack search proves that no such system is
feasible.  This completes the exceptional case and, together with \(s>1\),
the subcritical rigidity theorem.

\begin{theorem}[The \(s=1\) case]\label{thm:s-one}
Under the hypotheses of Theorem~\ref{thm:SC}, if \(\min A=1\), then some
row is deficient.
\end{theorem}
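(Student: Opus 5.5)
We argue by contradiction: assume every row of \(A\) passes, with \(\min A=1\). By Lemma~\ref{lem:s1-minimum-row} the row at \(1\) passing forces a selected good element above \(1\). Let \(x\) be the least one. The four configurations of this section are then exhaustive. Either \(x\ge3\), handled by Lemma~\ref{lem:s1-x-ge-three}, which gives \(Q\le703\). Or \(x=2\) and \(3\in A\), handled by Lemma~\ref{lem:s1-three}, which gives \(Q\le715\). Or \(x=2\), \(3\notin A\), and every selected good element is dyadic, handled by Lemma~\ref{lem:no-nondyadic}, which gives \(Q\le171\). Or \(x=2\), \(3\notin A\), and a nondyadic good element exists, handled by Proposition~\ref{prop:critical-289}, which gives \(Q\le289\). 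If \(x\ge3\) fails then \(x=2\), since \(2\) is the only candidate below \(3\). The remaining split on \(3\in A\) and on the existence of a nondyadic good element is a dichotomy. So in every case \(4\le Q\le715\).

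It remains to exclude all-passing sets in this bounded range, uniformly over the branches. For each \(Q\le715\) (odd or even, since branch one allows \(x\ge3\) with \(Q\) even), the candidate universe is \(\{a:1\le a<Q/3,\ (a,Q)=1\}\) with \(1\) forced. Following the scheme of Proposition~\ref{prop:s-positive-finite}, we encode membership as \(0\)--\(1\) variables subject to \(\sum a\le Q\) and to \(M_a\ge1\) for every selected \(a\). To keep the verification exact we replace \(K_Q(d)\) by the rational upper envelope of Proposition~\ref{prop:kernel}, rounded upward after scaling by \(10^7\). A genuine all-passing set then still satisfies the integer inequalities, so infeasibility of the relaxed system suffices.

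The search runs as a synchronous peeling procedure. For each candidate \(a\), compute a knapsack score: the maximum rounded row mass achievable from the other surviving candidates under the remaining budget \(Q-1-a\). Delete \(a\) if this score is at most \(10^7\), and iterate. If peeling removes \(1\), or empties the system, the denominator is excluded. Where peeling stalls, branch on the largest surviving candidate and rerun peeling in each branch. We can also prune with the slack consequences \(R<8n/7\) and \(W<16n/7\) from Lemma~\ref{lem:s1-minimum-row}, and with the per-branch structure, for example requiring \(2\in A\) when \(x=2\).

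The main obstacle is this last step. The anchor \(1\) interacts with every element at distance \(\normQ{a}\), and dyadic chains \(1,2,4,\dots\) make rows nearly balanced, so near-passing configurations like the binary pattern, which is critical rather than subcritical, can survive pure peeling. I would therefore attack these first by explicitly branching on the dyadic chain length and on the candidates in \(\mathcal G_Q\). If needed, I would refine the kernel rounding only on those survivors, using the rational envelope \(1/d+16d/(7Q^2)\) instead of the coarser uniform correction.
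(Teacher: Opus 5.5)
Your proposal takes essentially the paper's route. The same exhaustive four-way split on the least good element \(x\) (Lemmas~\ref{lem:s1-x-ge-three}, \ref{lem:s1-three}, \ref{lem:no-nondyadic} and Proposition~\ref{prop:critical-289}) confines any all-passing set to \(4\le Q\le715\). The paper then closes that box with the exact \(0\)--\(1\) system you describe: upward-rounded weights \(\lceil 10^7(1/d+16d/(7Q^2))\rceil\), the numerator budget, knapsack-bounded deletion, and branching when deletion stalls. Its verifier reports this system infeasible for all \(712\) denominators, \(704\) of them at the root, so the dyadic-chain worry in your last paragraph does not materialize.

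One bookkeeping care point: with \(1\) forced, the budget \(Q-1-a\) is right for the remaining candidates in a row \(a\ne1\). The row at \(1\) itself must be scored with budget \(Q-1\); using \(Q-2\) could unsoundly delete the anchor.
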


\begin{proof}
\noindent\emph{An exact necessary system.}
We first formulate the finite system that any counterexample would satisfy.
For each integer
\(Q\) with \(4\le Q\le715\), let
\[
 \mathcal P_Q=\{a\in\Z:1\le a<Q/3,\ (a,Q)=1\}.
\]
For every \(b\in\mathcal P_Q\), let
\(b^{-1}\in\{1,\ldots,Q-1\}\) be the unique representative satisfying
\(bb^{-1}\equiv1\pmod Q\).  For distinct
\(a,b\in\mathcal P_Q\), define
\[
 d_{ab}=\normQ{ab^{-1}},\qquad
 v_{ab}=\left\lceil \Lambda\left(\frac1{d_{ab}}
 +\frac{16d_{ab}}{7Q^2}\right)\right\rceil,
 \qquad \Lambda=10^7.
\]
The necessary integer system has binary variables
\(z_a\in\{0,1\}\) for \(a\in\mathcal P_Q\), with
\begin{equation}\label{eq:finite-system}
 z_1=1,\qquad
 \sum_{a\in\mathcal P_Q}az_a\le Q,
 \qquad
 \sum_{b\in\mathcal P_Q\setminus\{a\}}v_{ab}z_b
 \ge \Lambda z_a\quad(a\in\mathcal P_Q).
\end{equation}
Here \(z_a=1\) means \(a\in A\).  The anchor condition is \(z_1=1\),
and the middle inequality is precisely the relaxed numerator budget.  For a
selected \(a\), passing gives
\[
 1\le M_a=\sum_{b\ne a}K_Q(d_{ab}),
 \qquad d_{ab}=\normQ{ab^{-1}}.
\]
Proposition~\ref{prop:kernel} is strict, so
\[
 1<\sum_{b\ne a}
 \left(\frac1{d_{ab}}+\frac{16d_{ab}}{7Q^2}\right)z_b.
\]
Since \(v_{ab}\) is obtained by multiplying each parenthesized quantity by
\(\Lambda\) and rounding upward,
\[
 \sum_{b\ne a}v_{ab}z_b>\Lambda.
\]
Therefore the sum is certainly at least the integer \(\Lambda\).  If
\(z_a=0\), the final
inequality of \eqref{eq:finite-system} is automatic.  Consequently,
every genuine counterexample yields a feasible solution of
\eqref{eq:finite-system}.

Appendix~\ref{app:global-verifier} gives the exact branch-and-knapsack
exhaustion of \eqref{eq:finite-system}.  It proves that the system is
infeasible for every \(Q\) with \(4\le Q\le715\).

\smallskip
\noindent\emph{Exhaustion of the analytic branches.}
It remains to check that the analytic branches lie in this interval.
Lemma~\ref{lem:s1-minimum-row} guarantees that the least good element \(x\)
exists in any all-passing set.  The following four-way case split is
disjoint and exhaustive:
\begin{enumerate}[label=\textup{(\alph*)},leftmargin=2.2em]
\item \(x\ge3\), when Lemma~\ref{lem:s1-x-ge-three} gives \(Q\le703\);
\item \(x=2\) and \(3\in A\), when Lemma~\ref{lem:s1-three} gives
\(Q\le715\);
\item \(x=2\), \(3\notin A\), and every selected good element is dyadic,
when Lemma~\ref{lem:no-nondyadic} gives \(Q\le171\);
\item \(x=2\), \(3\notin A\), and a nondyadic good element is selected,
when Proposition~\ref{prop:critical-289} gives \(Q\le289\).
\end{enumerate}
The first decision is whether \(x=2\); in that case the second is whether
\(3\) is selected, and in the remaining case the third is whether a
nondyadic good element exists.  Hence no case is omitted.  Every branch is
therefore eliminated by
\eqref{eq:finite-system}, a contradiction.
\end{proof}

\begin{proof}[Proof of Theorem~\ref{thm:SC}]
Let \(s=\min A\).  Since \(A\) is nonempty, either \(s>1\) or \(s=1\).
Theorem~\ref{thm:s-positive} handles the first case, and
Theorem~\ref{thm:s-one} handles the second.  These cases are exhaustive, so
every subcritical pair has a deficient row, as asserted.
\end{proof}

\section{From the one-third theorem to the binary classification}
\label{sec:induction}

Theorem~\ref{thm:SC} and Corollary~\ref{cor:one-third} now establish the
one-third theorem, so the analytic part of the proof is complete.  We turn
that single large density into the full binary classification without further
finite verification.  Deletion preserves balance, periodicity puts the
compressed system back in the rational Beatty category, and induction makes
the survivors binary up to a common scale.  A rigid two-set extension then
determines both that scale and the deleted numerator.

\subsection{Periodic balanced sets}

The compressed indicators are periodic and balanced.  The following
classical mechanical-word characterization identifies them as
integer-translated rational Beatty sets while retaining the period and shift
\cite[pp.~470--471]{TijdemanBalanced2000}.

\begin{lemma}[Periodic balanced sets are rational Beatty sets]
\label{lem:periodic-balanced}
Every nonempty proper periodic balanced subset of \(\Z\) is an
integer-translated rational Beatty set.
\end{lemma}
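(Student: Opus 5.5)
Let \(X\subset\Z\) be nonempty, proper, periodic and balanced, with minimal period \(q\), and let \(p=|X\cap\{0,\ldots,q-1\}|\), so \(0<p<q\). The plan is to show directly that \(X=\B^q_{p,r}\) for a suitable integer \(r\). It is cleaner to work with the counting function than with gaps.

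\emph{Step 1: counts in windows.} For \(u\in\Z\) and \(N\ge0\), put
\[
 c(u,N)=|X\cap\{u,\ldots,u+N-1\}|.
\]
Periodicity gives \(c(u,kq)=kp\). Balance says that for fixed \(N\) the values \(c(u,N)\) take at most two consecutive integers. Averaging \(c(u,N)\) over \(u\) in one period equals \(Np/q\), because each point of \(X\) is counted \(N\) times in a period. Hence every \(c(u,N)\) lies in \(\{\lfloor Np/q\rfloor,\lceil Np/q\rceil\}\), and when \(q\mid Np\) it equals \(Np/q\) exactly.

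\emph{Step 2: the discrepancy function.} Enumerate \(X\) increasingly as \((x_j)_{j\in\Z}\) and set
\[
 e_j=x_j-\frac{jq}{p}.
\]
Periodicity gives \(x_{j+p}=x_j+q\), so \(e_j\) is \(p\)-periodic in \(j\). Step 1, applied to the window running from \(x_i\) to just before \(x_j\), shows \(\bigl|(x_j-x_i)-(j-i)q/p\bigr|<1\) for all \(i<j\). Hence \(\max e-\min e<1\).

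\emph{Step 3: identifying the floor form.} This is the step I expect to be the main obstacle: turning the discrepancy bound into \(x_j=\lfloor jq/p+\theta\rfloor\) for a single real \(\theta\). First choose \(\theta\) so that \(x_j-jq/p-\theta\in(-1,0]\) for every \(j\). Since all the values \(e_j\) lie in an interval of length less than one, \(\theta=\max_j e_j\), shifted by a small adjustment, works. Then \(x_j=\lceil jq/p+\theta'\rceil\) for some real \(\theta'\), which is equivalent to a floor form with a shifted real parameter.

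\emph{Step 4: the conclusion.} The set is now \(S(q/p,\beta)\) with \(\beta\in\mathbb R\) and rational modulus. The shift-normalization calculation of Section~\ref{sec:normalization} then rewrites it as \(\B^{b}_{a,r}\) with \(q/p=b/a\) in lowest terms and an integer \(r\). This representation also equals \(\B^q_{p,r}\), as shown there, which gives the desired integer-translated rational Beatty set with the same period.

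The delicate part of Step 3 is the boundary behaviour: the windows in Step 2 must be taken half-open, so that the inequality is strict, and the floor/ceiling convention must be chosen consistently with that strictness. If this becomes messy, I would instead cite the classical characterization of balanced periodic words as mechanical words, as the statement itself indicates \cite[pp.~470--471]{TijdemanBalanced2000}, and then apply Step 4.
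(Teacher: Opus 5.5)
There is a gap in Step~2, and that is where the real difficulty lies, not in Step~3. Apply Step~1 to the half-open window $\{x_i,\ldots,x_j-1\}$, which has length $N=x_j-x_i$ and contains exactly $k=j-i$ points. It gives $|k-Np/q|<1$. Multiplying by $q/p$ yields only $|N-kq/p|<q/p$, and $q/p>1$.

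This is more than a lost constant. Take $X=\{0,1\}+7\Z$, so $p=2$, $q=7$, $x_0=0$, $x_1=1$. Every window of the form $\{x_i,\ldots,x_j-1\}$ satisfies the conclusion of Step~1, yet $e_1-e_0=-5/2$. So the inference as written never uses the balance information that excludes this set.

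The repair uses two other windows:
\begin{itemize}
\item The window strictly between $x_i$ and $x_j$ has length $N-1$ and contains $k-1$ points. If $N-1\ge kq/p$, Step~1 would force at least $\lfloor (N-1)p/q\rfloor\ge k$ points, a contradiction.
\item The closed window $\{x_i,\ldots,x_j\}$ has length $N+1$ and contains $k+1$ points. If $N+1\le kq/p$, Step~1 would allow at most $\lceil (N+1)p/q\rceil\le k$ points, a contradiction.
\end{itemize}
Hence $|N-kq/p|<1$, that is, $|e_j-e_i|<1$.

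With this in place, Step~3 is immediate. The maximum $\theta=\max_j e_j$ is attained by $p$-periodicity, and $e_j-\theta\in(-1,0]$ means $jq/p+\theta\in[x_j,x_j+1)$. So $x_j=\lfloor jq/p+\theta\rfloor$, with no adjustment and no detour through ceilings. Step~4 then works as you describe.

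For comparison, the paper takes the discrepancy of the counting function, $D_n=F(n)-n\delta$, indexed by positions rather than by points. In those coordinates your single half-open window argument does give $|D_n-D_m|<1$ at once, and it yields $F(n)=\lfloor n\delta+C\rfloor$. The cost comes at the end. The jump set of that mechanical word is a translate of $\{\lceil \ell q/p\rceil:\ell\in\Z\}$, and converting it to floor form needs an integer reindexing identity between ceilings and floors. Your position-based route, once repaired, trades that conversion for the two-window dual-balance estimate. It then lands directly in the floor form that Section~\ref{sec:normalization} normalizes.
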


\begin{proof}
Let \((w_t)_{t\in\Z}\) be its binary word.  Choose a period \(Q\), and
let \(P\) be the number of ones in one period.  Nonemptiness and
properness give \(0<P<Q\).  Put \(\delta=P/Q\).  For \(L\ge1\), let
\[
 A_t(L)=\sum_{j=0}^{L-1}w_{t+j}
\]
be the number of ones in the length-\(L\) factor beginning at \(t\).
Periodicity gives
\[
 \sum_{t=0}^{Q-1}A_t(L)
 =\sum_{j=0}^{L-1}\sum_{t=0}^{Q-1}w_{t+j}=LP.
\]
Thus the average of these \(Q\) integer counts is \(L\delta\).  Balance
says that any two differ by at most one, and therefore
\begin{equation}\label{eq:factor-count-floor-ceil}
 A_t(L)\in\{\lfloor L\delta\rfloor,\lceil L\delta\rceil\}.
\end{equation}
Indeed, if \(a\) is the least of the integer counts, then all of them lie
in \(\{a,a+1\}\).  Their average lies in \([a,a+1]\), which forces
\(a=\lfloor L\delta\rfloor\) unless the average is an integer; in the
integer case all counts equal that average.

\smallskip
\noindent\emph{A mechanical representation.}
Define \(F(0)=0\),
\[
 F(n)=\sum_{0\le t<n}w_t\quad(n>0),\qquad
 F(n)=-\sum_{n\le t<0}w_t\quad(n<0).
\]
For \(n>m\), this convention ensures that
\(F(n)-F(m)=\sum_{t=m}^{n-1}w_t\).  Every interval of \(Q\) consecutive
positions contains \(P\) ones, so \(F(n+Q)-F(n)=P\).  Consequently
\[
 D_n:=F(n)-n\delta
 \quad\text{satisfies}\quad D_{n+Q}=D_n.
\]
If \(n>m\) and \(L=n-m\), then \eqref{eq:factor-count-floor-ceil}
shows that
\[
 D_n-D_m=F(n)-F(m)-L\delta\in(-1,1).
\]
By symmetry, \(|D_n-D_m|<1\) for all \(m,n\).

The periodic sequence \(D_n\) takes only finitely many values.  Choose
\(N\) with \(C:=D_N=\max_nD_n\).  Then \(0\le C-D_n<1\).  Since
\(n\delta+C=F(n)+(C-D_n)\) and \(F(n)\) is an integer, we obtain
\begin{equation}\label{eq:mechanical-floor}
 F(n)=\lfloor n\delta+C\rfloor,\qquad
 w_n=F(n+1)-F(n).
\end{equation}

\smallskip
\noindent\emph{From jumps to a Beatty set.}
Because \(C=D_N=F(N)-N\delta\), the second identity becomes
\begin{equation}\label{eq:shifted-mechanical-word}
 w_n=\lfloor(n+1-N)\delta\rfloor-
     \lfloor(n-N)\delta\rfloor.
\end{equation}
Write \(\delta=P_0/Q_0\) in lowest terms and put
\(\alpha=Q_0/P_0=1/\delta>1\).  If \(k=n-N\), the right side of
\eqref{eq:shifted-mechanical-word} counts the integers \(\ell\) in
\(k\delta<\ell\le(k+1)\delta\).  Since \(0<\delta<1\), there is at most
one.  Such an integer exists exactly when
\(k<\ell\alpha\le k+1\), or \(k=\lceil\ell\alpha\rceil-1\).  Hence the
jump positions are
\begin{equation}\label{eq:jump-ceil-set}
 N-1+\{\lceil\ell\alpha\rceil:\ell\in\Z\}.
\end{equation}

It remains to replace ceilings by floors using an integer translation.
If \(P_0=1\), take \(j=0\) and \(t=1\).  Then
\(jQ_0+1=tP_0\), and the required floor--ceiling identity is immediate
because \(\alpha=Q_0\) is integral.  If \(P_0>1\), multiplication by
\(Q_0\) permutes the residue classes modulo \(P_0\), so choose \(j\) with
\(jQ_0\equiv-1\pmod{P_0}\), and write \(jQ_0+1=tP_0\).  For every
\(\ell\in\Z\), direct separation of the case \(P_0\mid\ell\) gives
\begin{equation}\label{eq:ceil-floor-identity}
 \lfloor(\ell+j)\alpha\rfloor
 =\lceil\ell\alpha\rceil+t-1.
\end{equation}
For a direct check, write \(\ell Q_0=uP_0+r\), where
\(0\le r<P_0\).  Since \(j\alpha=t-1/P_0\), if \(r=0\) both sides of
\[
 \left\lfloor\ell\alpha-\frac1{P_0}\right\rfloor
 =\lceil\ell\alpha\rceil-1
\]
equal \(u-1\), while if \(r>0\) both equal \(u\).  This proves
\eqref{eq:ceil-floor-identity}.  As \(\ell\) ranges over \(\Z\), so does
\(\ell+j\).  Substitution in \eqref{eq:jump-ceil-set} identifies the set
of ones with
\[
 N-t+\{\lfloor\ell Q_0/P_0\rfloor:\ell\in\Z\}
 =\B^{Q_0}_{P_0,N-t},
\]
as required.  The shift \(N-t\) is an integer.
\end{proof}

Thus a nonempty proper compressed indicator returns to the rational Beatty
category as soon as periodicity and balance are known.  In the final
subsection we verify these conditions for every survivor and then apply
induction.  We first isolate the two-set criterion that makes the subsequent
reinsertion rigid.

\subsection{A rigid two-set extension}

After deletion and induction, the surviving numerators are
\(h,2h,\ldots,Mh\); only the deleted numerator remains unknown.  It is
enough to compare it with the survivor \(Mh\).  Their actual disjointness
feeds a two-sequence Diophantine criterion, and the power-of-two scale makes
that condition rigid.

The following theorem characterizes when a pair admits disjoint
shifts.  It is Morikawa's Japanese remainder theorem
\cite[Theorem~1]{Morikawa1985}, with Simpson's simpler proof
\cite[Theorem~8]{Simpson2004}; here \((x,y)\) denotes \(\gcd(x,y)\).
Lemma~\ref{lem:rigid-extension} uses only this pairwise statement, so no
simultaneous choice of shifts for the full family is required.
In the notation \(\B^q_{p,*}\), the asterisk means that the integer shift may
be chosen independently for each set.

\begin{theorem}[Morikawa--Simpson criterion]\label{thm:morikawa}
For \(0<p,s<q\), consider the two sets \(\B^q_{p,*}\) and
\(\B^q_{s,*}\), and set
\[
 g_p=(p,q),\quad A=q/g_p,\quad x=p/g_p,
 \qquad g_s=(s,q),\quad B=q/g_s,\quad y=s/g_s,
\]
and put \(d=(x,y)\), \(u=x/d\), \(v=y/d\).  Some translates of the two
rational Beatty sets are disjoint if and only if there exist positive
integers \(k,\ell\) such that
\begin{equation}\label{eq:morikawa}
 ku+\ell v=(A,B)-2uv(d-1).
\end{equation}
\end{theorem}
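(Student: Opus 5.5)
This criterion is quoted from the literature: Morikawa's Japanese remainder theorem, with Simpson's proof. The paper uses it only as a black box in Lemma~\ref{lem:rigid-extension}. If I reproved it, I would follow Simpson's route and only outline it here.

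\emph{Step 1: reduce periods.} Since \(\B^q_{p,r}=\B^{A}_{x,r}\) by the gcd-division argument of Section~\ref{sec:normalization}, the first set has period \(A\) and the second has period \(B\).

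\emph{Step 2: describe membership by residues.} From \(n=\lfloor kA/x\rfloor\iff nx\le kA<nx+x\), an integer \(n\) lies in \(\B^A_{x,0}\) exactly when the least residue of \(-nx\) modulo \(A\) lies in \([0,x)\). Translates are handled the same way: \(n\in\B^A_{x,r}\) iff \(-(n-r)x \bmod A\in[0,x)\), and similarly for the second set modulo \(B\). Disjointness of some translates then becomes the following condition. There should exist shifts \(\sigma,\tau\) such that no \(n\in\Z\) has both \((-nx+\sigma)\bmod A<x\) and \((-ny+\tau)\bmod B<y\).

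\emph{Step 3: pass to one modulus.} The map \(n\mapsto(-nx\bmod A,-ny\bmod B)\) has image the subgroup of \(\Z/A\times\Z/B\) cut out by the Chinese remainder compatibility modulo \(G=(A,B)\). I would parametrize the two intervals of starting residues and read off their images modulo \(G\), where the relevant comparison is between multiples of \(u\) and \(v\) after factoring out \(d=(x,y)\). The condition then becomes the following: one can place \(d\) blocks of the form "\(x\)-interval shifted by \(y\)-steps" so that they avoid the \(y\)-intervals in \(\Z/G\). This is a one-dimensional packing condition in \(\Z/G\Z\), and I expect it to read exactly as \(G=ku+\ell v+2uv(d-1)\) with \(k,\ell\ge1\). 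The term \(2uv(d-1)\) accounts for the forced overlap between the \(d\) parallel cosets, and \(ku+\ell v\) accounts for the free gaps.

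\emph{Main obstacle.} The hard step is the converse direction of this packing argument: showing that every disjoint configuration forces the gap lengths to be positive combinations of \(u\) and \(v\). I would try Simpson's induction by subtracting the smaller interval length, in the Euclidean-algorithm style. Since this is a published theorem, the paper's treatment is simply to cite \cite[Theorem~1]{Morikawa1985} and \cite[Theorem~8]{Simpson2004}.
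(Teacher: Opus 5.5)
Your treatment matches the paper's: the paper likewise does not reprove the criterion but cites Morikawa's Theorem~1 and Simpson's Theorem~8. It adds only a dictionary to Simpson's notation (\(q/p=A/x\) and \(q/s=B/y\) in lowest terms, with gcds \((A,B)\) and \(d\), and residual denominators \(u,v\)), together with the remark that Simpson's real-shift formulation is equivalent to the integer-translate one by the shift normalization of Section~\ref{sec:normalization}; you should state that bridging step too when invoking the citation.

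Your Step~3 is only heuristic. The image of \(n\mapsto(-nx,-ny)\) is the twisted subgroup cut out by \(\xi y\equiv\zeta x\pmod{(A,B)}\), not plain CRT compatibility. Consequently disjoint translates exist exactly when \(\{ay-bx:0\le a<x,\ 0\le b<y\}\) misses a residue class modulo \((A,B)\), and turning that covering condition into \eqref{eq:morikawa} is precisely the part you leave open. Since you rest the proof on the citation, this does not affect correctness.
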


To match the notation with the cited theorem, observe that after reduction
\[
 \frac qp=\frac{q/g_p}{p/g_p}=\frac Ax,
 \qquad
 \frac qs=\frac{q/g_s}{s/g_s}=\frac By,
\]
with \((A,x)=(B,y)=1\).  Simpson's reduced numerators are \(A,B\), his
reduced denominators are \(x,y\), and their gcds are \((A,B)\) and
\(d=(x,y)\).  His coprime residual denominators are exactly
\(u=x/d\) and \(v=y/d\), so his criterion is precisely
\eqref{eq:morikawa}.
Simpson states the criterion for real shifts.  The rational-shift
normalization in Section~\ref{sec:normalization} shows that existence for
real shifts is equivalent here to existence for integer translates, so the
displayed formulation loses no cases.

We now specialize the criterion to a binary family with one missing scale.
The inequality \(2p\ge Dh\) below is exactly the one-third bound after
deletion; coprimality records the primitive normalization.

\begin{lemma}[Rigid binary extension]\label{lem:rigid-extension}
Let \(M\ge2\) be a power of two and \(D=2M-1\).  Suppose that positive
integers \(p,h,q\) satisfy
\begin{equation}\label{eq:extension-data}
 q=p+Dh,\qquad (p,h)=1,\qquad 2p\ge Dh,\qquad p\ne Mh.
\end{equation}
If the two families \(\B^q_{p,*}\) and \(\B^q_{Mh,*}\) admit disjoint
translations,
then
\[
 (p,h,q)=(2M,1,4M-1).
\]
\end{lemma}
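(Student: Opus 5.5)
The plan is to apply Theorem~\ref{thm:morikawa} with the common denominator \(q\) and the two numerators \(p\) and \(s=Mh\). First I compute every quantity in \eqref{eq:morikawa} explicitly. From \(q=p+Dh\) and \((p,h)=1\) we get \((h,q)=(h,p)=1\) and \((p,q)=(p,D)\). Hence
\[
 g_p=(p,D)\text{ is odd},\qquad g_s=(Mh,q)=(M,q)=2^e
\]
for some \(0\le e\le\log_2 M\). Then \(A=q/g_p\), \(x=p/g_p\), \(B=q/2^e\), and \(y=Mh/2^e\). Since \(g_p\) is odd, \((A,B)=q/(g_p2^e)\). Because \((p,h)=1\) and \(g_p\) is odd, \(d=(x,y)\) is a power of two dividing both \(p\) and \(M/2^e\); in particular \(2^e d\mid M\). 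Put \(u=x/d\) and \(v=y/d\).

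Next I turn \eqref{eq:morikawa} into a necessary inequality. With \(k,\ell\ge1\), it gives \((A,B)\ge u+v+2uv(d-1)\), that is,
\[
 p+Dh\;\ge\; g_p2^e\bigl(u+v+2uv(d-1)\bigr),\qquad u=\frac{p}{g_pd},\quad v=\frac{Mh}{2^ed}.
\]
I then split into cases on \(d\).

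If \(d=1\), the right side is \(2^ep+g_pMh\). If \(e\ge1\) or \(g_p\ge3\), the hypothesis \(2p\ge Dh\) makes the right side exceed \(p+Dh\); for example \(2p+Mh\ge p+Dh/2+Mh>p+Dh\). So \(e=0\) and \(g_p=1\), and the identity reads \(kp+\ell Mh=p+Dh\). Reducing modulo \(h\) forces \(k\equiv1\pmod h\). The size bound \(kp\le p+(M-1)h<2p\), which uses \(p>(M-1)h\), forces \(k=1\). Then \(\ell M=2M-1\), which is impossible for \(M\ge2\).

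If \(d\ge2\), I use \(2(d-1)\ge d\). The dominant term becomes \(g_p2^e\cdot 2uv(d-1)\ge g_p2^e\,uvd=pMh/(g_pd)\). Since \(d\le M/2^e\), comparing with \(p+Dh\) and using \(p\ge Dh/2\) should force \(h=1\), \(g_p=1\), \(e=0\), and \(d=M\) (or \(d\) very close to \(M\)), with \(u\) small. After that, \(d=M\mid p\) and \(2p\ge(2M-1)\), together with the inequality, should pin down \(p=2M\). Equivalently \(u=2\) and \(v=1\), and the criterion then holds with \(k=\ell=1\), since \(2+1+4(M-1)=4M-1=q\). The case \(p=M\), i.e.\ \(u=1\), is excluded by \(p\ne Mh\). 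Larger multiples \(p=cM\) with \(c\ge3\) fail the inequality \(q\ge u+v+2uv(d-1)\), because the \(2uv(d-1)\) term grows like \(2cM\) while \(q=cM+2M-1\).

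The main obstacle is this \(d\ge2\) bookkeeping. Each of the parameters \(h\), \(g_p\), \(e\), and \(d/M\) must be shown to be extremal using only one inequality plus \(2p\ge Dh\). I would first try to prove \(g_p2^e\,uv\,d\le p+Dh\) and divide by \(p\), obtaining roughly \(Mh/(g_pd)\le 1+Dh/p\le 3\). With \(d\le M/2^e\), this gives \(2^eh\le 3g_p\). A short case check on the small survivors \((h,g_p,e)\) then finishes, reusing the congruence trick from the \(d=1\) case to exclude the near-miss cases where the inequality alone is not strict enough.
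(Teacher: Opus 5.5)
Your computation of the Morikawa--Simpson data is correct, and so is your case \(d=1\). That case even absorbs the paper's separate parity step showing \((M,q)=1\). The gap is the case \(d\ge2\), which is where the substance of the lemma lies; in the paper this is the polynomial inequality excluding \(1<d<M\) and the analysis at \(d=M\). You leave it at ``should force''.

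Moreover, the one concrete estimate you propose for it contains an algebra slip:
\[
 g_p2^e\,uvd=g_p2^e\cdot\frac{p}{g_pd}\cdot\frac{Mh}{2^ed}\cdot d=\frac{pMh}{d},
\]
not \(pMh/(g_pd)\). From your weaker version you only reach \(2^eh\le3g_p\). Since \(g_p=(p,D)\) can be any odd divisor of \(D=2M-1\), this does not leave a finite list of survivors \((h,g_p,e)\). So the promised short case check does not exist as stated, and a congruence modulo \(h\) is not what closes the remaining cases.

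The repair is available inside your own framework. The discarded terms \(g_p2^e(u+v)\) are positive, so the correct value gives \(pMh/d<p+Dh\le3p\). Hence the integer \((M/d)h\) is at most \(2\). Also, \(d\ge2\) makes \(p\) even, hence \(h\) and \(q\) odd, so \(e=0\). Only \((d,h)\in\{(M,1),(M,2),(M/2,1)\}\) remain, and each closes using \(d\mid p\) together with \(2p\ge Dh\):
\begin{itemize}
\item For \(d=M/2\) (so \(M\ge4\)) one has \(p\ge M\). The necessary inequality becomes \(2M-1-2g_p\ge3p(1-2/M)\ge3M-6\), which is impossible.
\item For \(d=M\), write \(p=g_pMu\). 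The inequality becomes \(g_pu(M-1)(2h-1)\le h(2M-1-g_p)\). Moreover \(g_pu\ge2\): this follows from \(2p\ge Dh\) if \(h=2\), and from \(p\ne M\) if \(h=1\). This excludes \(h=2\). For \(h=1\) it forces \(g_p=1\) and \(u=2\), i.e.\ \(p=2M\).
\end{itemize}
Completed this way, your route would replace the paper's polynomial identity for \(1<d<M\) by the single estimate \((M/d)h\le2\), which is a genuine simplification. As submitted, however, the decisive case is not proved.
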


\begin{proof}
The hypotheses imply \(0<p<q\), and also
\(q-Mh=p+(M-1)h>0\); hence both numerators lie in the range required by
Theorem~\ref{thm:morikawa}.

\smallskip
\noindent\emph{Arithmetic reduction.}
Set
\[
 c=(p,q)=(p,D),\qquad t=(Mh,q)=(M,q).
\]
Every gcd equality follows directly from \(q=p+Dh\):
\[
 (p,q)=(p,p+Dh)=(p,Dh)=(p,D),
\]
where the final equality uses \((p,h)=1\).  Likewise
\((h,q)=(h,p+Dh)=(h,p)=1\), so \((Mh,q)=(M,q)\).
Thus \(c\) is an odd divisor of \(D\), \(t\) is a power of two dividing
\(M\), and \((c,t)=1\).  Since \(c,t\mid q\) are coprime,
\(\operatorname{lcm}(c,t)=ct\), and hence
\((q/c,q/t)=q/(ct)\).  In Theorem~\ref{thm:morikawa}, the complete
dictionary is
\[
 g_p=c,\quad g_s=t,\quad A=q/c,\quad B=q/t,
 \quad x=p/c,\quad y=Mh/t,
 \quad d=(x,y),\quad u=x/d,\quad v=y/d.
\]
Thus disjointness requires positive \(k,\ell\)
with
\begin{equation}\label{eq:extension-morikawa}
 ku+\ell v=\frac{q}{ct}-2uv(d-1).
\end{equation}

\smallskip
\noindent\emph{The common dyadic factor.}
First we prove \(t=1\).  If \(t>1\), then the power of two \(t\mid q\)
is even, so \(q\) is even.  Since \(q=p+Dh\) and \(D\) is odd,
\(p,h\) have the same parity.  Coprimality excludes both being even, so
both are odd.  Therefore \(x=p/c\) is odd, whereas
\(y=(M/t)h\) is a power of two times \(h\).  Because
\((x,h)=1\) and \((x,M/t)=1\), we get \(d=(x,y)=1\).  Multiplying
\eqref{eq:extension-morikawa} by \(ct\) gives
\begin{equation}\label{eq:extension-even}
 kpt+\ell Mhc=p+Dh.
\end{equation}
Since \(k,\ell\ge1\), the left side of \eqref{eq:extension-even} is at
least \(pt+Mhc\).  If \(c>1\), then \(c\ge3\) and
\[
 pt+Mhc-(p+Dh)
 =(t-1)p+\bigl(M(c-2)+1\bigr)h>0,
\]
contradicting \eqref{eq:extension-even}.  If \(c=1\), the same lower bound
in \eqref{eq:extension-even} gives
\((t-1)p\le(M-1)h\), whereas \(t\ge2\) and
\(p\ge(M-\tfrac12)h\).  This is again impossible.

\smallskip
\noindent\emph{The residual gcd.}
Now
\[
 d=(p/c,Mh)=(p/c,M),
\]
because \((p/c,h)=1\).  Thus \(d\mid M\).  Write
\[
 e=M/d,\qquad v=eh,\qquad p=cdu.
\]
Since \(k,\ell\ge1\), equation \eqref{eq:extension-morikawa} implies
\[
 \frac qc-2uv(d-1)\ge u+v.
\]
Substitute \(q/c=du+(D/c)h\) and \(v=eh\):
\begin{align*}
 du+\frac Dc h-2u(eh)(d-1)&\ge u+eh,\\
 h\left(\frac Dc-e\right)&\ge(d-1)u(2eh-1).
\end{align*}
This is the necessary inequality
\begin{equation}\label{eq:extension-necessary}
 (d-1)u(2v-1)\le h\left(\frac Dc-e\right).
\end{equation}

We exclude the possibilities \(d=1\) and \(1<d<M\) in turn; since \(d\)
divides the power of two \(M\), the only remaining value will be \(d=M\).

If \(d=1\), the left side of \eqref{eq:extension-necessary} is zero, so
\(D/c-M\ge0\).  If \(c>1\), then the odd integer \(c\ge3\) and
\(D/c\le(2M-1)/3<M\), a contradiction.  Hence \(c=1\).  Since now
\(t=c=d=1\) and \(p=u\), equation \eqref{eq:extension-morikawa} is
directly
\(kp+\ell Mh=p+Dh\).  Subtracting
\(p+Mh\) from both sides gives
\[
 (k-1)p+M(\ell-1)h=(M-1)h.
\]
Because \(p\ge(M-\tfrac12)h>(M-1)h\), this forces \(k=1\), and then
\(M(\ell-1)=M-1\), impossible.  Thus \(d>1\).

If \(1<d<M\), then \(d,e\ge2\).  From
\eqref{eq:extension-data},
\(2cdu=2p\ge Dh\), so \(u\ge Dh/(2cd)\).  Substituting this lower
bound into the left side of \eqref{eq:extension-necessary}, and then
multiplying by the positive number \(2cd/h\), gives
\[
 D(d-1)(2eh-1)\le2d(D-ce).
\]
Since \(h\ge1\), we have \(2eh-1\ge2e-1\).  Replacing the left factor
by this smaller one preserves the inequality.  Multiplying by \(d\) and
using \(M=ed\) yields
\begin{equation}\label{eq:extension-polynomial}
 D(d-1)(2M-d)\le2d(Dd-Mc)\le2d(Dd-M).
\end{equation}
We now show that the reverse strict inequality always holds.  Write the
leftmost and rightmost expressions as \(L\) and \(R\).  Substituting
\(M=ed\) and \(D=2ed-1\), direct expansion gives
\[
 L-R=d\bigl((4e^2-6e)d^2+(-4e^2+2e+3)d+2e-1\bigr).
\]
With \(E=e-2\ge0\) and \(Y=d-2\ge0\), the expression in parentheses is
\[
 (4E^2+10E+4)Y^2+(12E^2+26E+7)Y+(8E^2+14E+1),
\]
whose coefficients are positive and whose constant term is \(1\).
Therefore \(L-R>0\), contradicting \eqref{eq:extension-polynomial}.  The
only divisor of the power of two \(M\) remaining after \(d>1\) is
\(d=M\).

\smallskip
\noindent\emph{The final scale.}
Now \(e=1\) and \(v=h\), so \eqref{eq:extension-necessary} reads
\[
 (M-1)u(2h-1)\le h\left(\frac Dc-1\right).
\]
If \(c>1\), then \(c\ge3\).  The left side is at least \(h(M-1)\),
whereas the right side is at most \(h(2M-4)/3\).  Their coefficient
difference is
\[
 (M-1)-\frac{2M-4}{3}=\frac{M+1}{3}>0,
\]
so this is impossible.  Thus \(c=1\), and division by \(M-1>0\) gives
\(u(2h-1)\le2h\).  If \(h\ge2\), then \(u=1\), but
\(p=cdu=M\), while
\(Dh=(2M-1)h\ge4M-2>2M=2p\), contrary to
\eqref{eq:extension-data}.  Therefore
\(h=1\), \(u\le2\), and \(p=Mu\ne M\) gives \(u=2\).  Consequently
\(p=2M\) and \(q=p+D=4M-1\).
\end{proof}

\subsection{Induction from the one-third theorem}

The reconstruction starts by choosing a largest numerator and deleting
its letter.  The compressed period and densities can be read directly from
one original period.  Induction determines the survivors as a binary family
with common scale \(h\); primitivity and the one-third bound supply the two
arithmetic constraints that let the rigid extension determine both \(h\)
and the deleted numerator.

\begin{proposition}\label{prop:one-third-induction}
Theorem~\ref{thm:one-third-main}, together with the known cases
\(3\le m\le7\), implies Theorem~\ref{thm:main} for every \(m\ge3\).
\end{proposition}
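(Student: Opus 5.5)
We argue by induction on \(m\), starting from the known cases \(3\le m\le7\) (in fact \(m=3\) would suffice as a base once the step is available). Fix \(m\ge4\) and a partition as in Theorem~\ref{thm:main}, reduced to primitive data \eqref{eq:primitive}. Choose \(k\) with \(p_k\) maximal. By Theorem~\ref{thm:one-third-main}, \(3p_k\ge q\), so letter \(k\) has Beatty rate \(q/p_k\le3\). Lemma~\ref{lem:deletion} then shows that deleting letter \(k\) from the coding word \(W\) leaves a balanced word \(W'\). Since \(W\) is \(q\)-periodic and each period contains exactly \(p_k\) copies of \(k\), the word \(W'\) is periodic with period \(q'=q-p_k\). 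Each surviving letter \(i\ne k\) occupies exactly \(p_i\) positions per compressed period. Each surviving indicator is nonempty, proper (since \(m-1\ge2\)) and balanced, so Lemma~\ref{lem:periodic-balanced} makes it an integer-translated rational Beatty set. The \(m-1\ge3\) survivors therefore form a Beatty partition of \(\Z\) with distinct densities \(p_i/q'\).

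By the induction hypothesis, the survivors' densities are \(2^j/(2^{m-1}-1)\) for \(0\le j<m-1\). Put \(M=2^{m-2}\) and \(D=2M-1\). Then \(q'=Dh\) and \(\{p_i:i\ne k\}=\{h,2h,\dots,Mh\}\) for some positive integer \(h\). Hence \(q=p_k+Dh\), and primitivity \(\gcd(q,p_1,\dots,p_m)=1\) reduces to \((p_k,h)=1\). The one-third bound \(3p_k\ge q=p_k+Dh\) gives \(2p_k\ge Dh\), and distinctness gives \(p_k\ne Mh\). Finally, the letters \(k\) and the survivor of numerator \(Mh\) are actually disjoint in the original partition, so \(\B^q_{p_k,*}\) and \(\B^q_{Mh,*}\) admit disjoint translates.

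All hypotheses \eqref{eq:extension-data} of Lemma~\ref{lem:rigid-extension} now hold with \(p=p_k\), and \(M\ge2\) is a power of two. The lemma yields \(h=1\), \(p_k=2M=2^{m-1}\), and \(q=4M-1=2^m-1\). Together with the survivors \(\{1,2,\dots,2^{m-2}\}\), the full set of numerators is \(\{1,2,\dots,2^{m-1}\}\). Section~\ref{sec:normalization} showed that this is equivalent to Theorem~\ref{thm:main}.

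The main obstacle is the bookkeeping in the compression step. We must make sure that the survivors are expressed over the common denominator \(q'=q-p_k\) with numerators exactly \(p_i\). The inductive conclusion, which is stated for primitive data of the survivors, must then be rescaled by an unknown common factor \(h\). The subtle point is that the survivors' own primitive gcd may exceed \(1\). This is exactly why \(h\) appears, and why primitivity of the original data only yields \((p_k,h)=1\) rather than \(h=1\) directly. Lemma~\ref{lem:rigid-extension} is designed to absorb precisely this.
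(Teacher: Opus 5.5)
Your proposal is correct and follows the paper's proof essentially step for step. You delete a largest letter using Theorem~\ref{thm:one-third-main} and Lemma~\ref{lem:deletion}, recover a rational Beatty partition of period $q-p_k$ via Lemma~\ref{lem:periodic-balanced}, write the survivors as $h,2h,\dots,Mh$ by induction, and close with Lemma~\ref{lem:rigid-extension} using $(p_k,h)=1$, $2p_k\ge Dh$, $p_k\ne Mh$ and disjointness. Your remark that the step works for every $m\ge4$ is valid, since Lemma~\ref{lem:rigid-extension} only needs $M\ge2$. The paper starts the step at $m\ge8$ only because it cites $3\le m\le7$, and it adds a one-line remark that you omit: the cited one-sided results transfer to the bi-infinite formulation by restricting to positive tails.
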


\begin{proof}
The cases \(3\le m\le6\) are covered by
Tijdeman~\cite{Tijdeman2000}, and the case \(m=7\) is due to Bar\'at and
Varj\'u~\cite[Theorem~1.6]{BaratVarju2003}.  To pass from the bi-infinite
formulation to these cited one-sided formulations, restrict each strictly
increasing Beatty sequence to its positive members and reindex that tail.
This preserves its modulus and gives a partition of \(\Z_{>0}\).
By the rationality argument and the selected
common-denominator normalization in Section~\ref{sec:normalization}, the
given bi-infinite partition admits primitive data
\((q;(p_i,r_i)_{i=1}^m)\) satisfying \eqref{eq:primitive}; thus every set
is \(\B^q_{p_i,r_i}\).
Assume the result for \(m-1\), with \(m\ge8\), and choose a largest
numerator \(p\).  By Theorem~\ref{thm:one-third-main}, its rate
\(q/p\le3\).  Lemma~\ref{lem:deletion} shows that erasing this letter
leaves a balanced word.

\smallskip
\noindent\emph{Deletion and compression.}
Enumerate the retained positions in increasing order as
\[
 \cdots<z_{-1}<z_0<z_1<\cdots.
\]
The original coding word is \(q\)-periodic, and every interval of \(q\)
consecutive positions contains exactly \(p\) copies of the deleted letter.
Hence translation by \(q\) preserves the retained set and advances its
order index by exactly \(q-p\):
\[
 z_{j+q-p}=z_j+q\qquad(j\in\Z).
\]
If the compressed word is \(\widetilde W_j=W_{z_j}\), then
\[
 \widetilde W_{j+q-p}=W_{z_j+q}=W_{z_j}=\widetilde W_j.
\]
Thus it has period \(q-p\).  The indicator of each remaining
letter is a nonempty proper periodic balanced binary word; by
Lemma~\ref{lem:periodic-balanced}, the compressed word is therefore a partition
into \(m-1\) rational Beatty sets.  More precisely, the compressed indices
\(j,j+1,\ldots,j+q-p-1\) correspond to the retained positions in the
half-open interval \([z_j,z_j+q)\).  That interval is one full original
period and contains exactly \(p_i\) copies of the \(i\)-th surviving
letter.  Its compressed density is therefore
\(p_i/(q-p)\).  Since the \(p_i\) are distinct, the surviving densities,
and hence their reciprocal moduli, remain distinct.

This change of modulus is Fraenkel's rescaling
\cite[Lemma~6]{Fraenkel1973}.  Lemma~\ref{lem:deletion} preserves balance
under deletion, and Lemma~\ref{lem:periodic-balanced} identifies the
compressed system as a rational Beatty partition.

\smallskip
\noindent\emph{Induction and reinsertion.}
By the induction hypothesis, let
\(M=2^{m-2}\) and \(D=1+2+4+\cdots+M=2M-1\).  The remaining moduli are
\[
 \left\{\frac{D}{2^j}:0\le j\le m-2\right\},
\]
so, on taking reciprocals, their densities are
\(\{2^j/D:0\le j\le m-2\}\).  For the density \(1/D\), some original
numerator \(p_i\) satisfies \(p_i/(q-p)=1/D\).  Hence the positive integer
scale is
\[
 h=\frac{q-p}{D}=p_i\in\Z_{>0},
\]
and the remaining original numerators are
\[
 h,2h,\ldots,Mh,\qquad q=p+Dh.
\]
Primitive normalization gives
\[
 1=\gcd(q,p,h,2h,\ldots,Mh)
 =\gcd(q,p,h)=\gcd(p+Dh,p,h)=\gcd(p,h).
\]
The one-third inequality is \(3p\ge q\), equivalently \(2p\ge Dh\),
and distinctness gives \(p\ne Mh\) (indeed, maximality gives
\(p>Mh\)).  The original sets with numerators \(p\) and \(Mh\) are
disjoint.  Lemma~\ref{lem:rigid-extension} therefore yields
\[
 p=2M,\qquad h=1,\qquad q=4M-1=2^m-1.
\]
Thus the original numerator set is
\(\{1,2,4,\ldots,M,2M\}=\{2^j:0\le j<m\}\), with
\(q=4M-1=2^m-1\), completing the induction.
\end{proof}

\begin{proof}[Proof of Theorem~\ref{thm:main}]
Theorem~\ref{thm:SC} and Corollary~\ref{cor:one-third} establish
Theorem~\ref{thm:one-third-main}.  Proposition~\ref{prop:one-third-induction}
then gives the binary classification for every \(m\ge3\).
\end{proof}

\subsection{The balanced-sequence corollary}
\label{subsec:balanced-corollary}

To prove Corollary~\ref{cor:balanced}, we first pass from a one-sided
balanced word to a Beatty partition.  The key is to treat each letter
indicator separately: after deleting a finite prefix, it agrees with the
indicator of a Beatty set.  The following lemma makes this passage precise.

\begin{lemma}[Eventual Beatty representation]\label{lem:eventual-beatty}
Let \(w=(w_n)_{n\ge0}\) be a balanced word over \(\{0,1\}\), and
suppose that the density of the letter \(1\) is \(\rho\in(0,1)\).
Then there exist \(\beta\in\mathbb R\) and an integer \(N\ge0\) such
that
\[
 w_n=1\quad\Longleftrightarrow\quad n\in S(1/\rho,\beta)
 \qquad(n\ge N).
\]
Moreover, if \(\rho=p/q\) in lowest terms, then \(w\) is eventually
\(q\)-periodic: \(w_{n+q}=w_n\) for all sufficiently large \(n\).
\end{lemma}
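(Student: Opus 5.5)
We work with the partial sums \(F(n)=\sum_{0\le t<n}w_t\), in the spirit of the mechanical representation in Lemma~\ref{lem:periodic-balanced}, but with the periodic averaging argument replaced by a limiting one.

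\textbf{Step 1: bounded discrepancy.} Balance says that factor counts of a fixed length \(L\) take at most two consecutive values, say \(\{a_L,a_L+1\}\). Splitting an initial factor of length \(kL\) into \(k\) blocks of length \(L\) and letting \(k\to\infty\) shows \(a_L\le L\rho\le a_L+1\). Hence every factor count of length \(L\) lies in \(\{\lfloor L\rho\rfloor,\lceil L\rho\rceil\}\), exactly as in \eqref{eq:factor-count-floor-ceil}. Consequently \(D_n=F(n)-n\rho\) satisfies \(|D_n-D_m|<1\) for all \(n,m\ge0\).

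\textbf{Step 2: choice of the shift.} Put \(C=\sup_n D_n\), which is finite. Two cases arise.
\begin{itemize}
\item If the supremum is attained at some \(N_0\), then \(0\le C-D_n<1\) for all \(n\), so \(F(n)=\lfloor n\rho+C\rfloor\) for every \(n\ge0\). We may then take \(N=0\).
\item If the supremum is not attained, then \(0<C-D_n<1\) for all \(n\), so \(F(n)=\lceil n\rho+C\rceil-1\). Subtracting consecutive values turns this into a ceiling-type mechanical word.
\end{itemize}

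\textbf{Step 3: passing to a Beatty set.} In either case the ones of \(w\) are the jumps \(F(n+1)-F(n)=1\). As in \eqref{eq:jump-ceil-set}, these are the \(n\ge0\) for which some integer \(\ell\) lies in \((n\rho+C,(n+1)\rho+C]\) (floor case) or in \([n\rho+C,(n+1)\rho+C)\) (ceiling case). Since \(\rho<1\) there is at most one such \(\ell\), and it gives
\[
n=\lceil(\ell-C)/\rho\rceil-1 \quad\text{or}\quad n=\lfloor(\ell-C)/\rho\rfloor
\]
respectively. The floor case is immediately \(S(1/\rho,-C/\rho)\) restricted to \(n\ge0\). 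The ceiling case needs a conversion:
\begin{itemize}
\item When \(\rho\) is irrational, \(\lceil x\rceil-1=\lfloor x\rfloor\) except at integer \(x\). Since \((\ell-C)/\rho\) is an integer for at most one \(\ell\), the ceiling case agrees with \(S(1/\rho,-C/\rho)\) off at most one point. Choosing \(N\) past that point handles it.
\item When \(\rho=p/q\) is rational, a shift \(\beta\) perturbed by less than \(1/p\) converts ceilings to floors, exactly as in the rational normalization of Section~\ref{sec:normalization} and \eqref{eq:ceil-floor-identity}.
\end{itemize}
Thus in all cases \(w\) agrees with the indicator of \(S(1/\rho,\beta)\) for \(n\ge N\), after possibly discarding finitely many initial terms.

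\textbf{Step 4: eventual periodicity.} If \(\rho=p/q\), the indicator of \(S(q/p,\beta)\) is \(q\)-periodic, as shown in Section~\ref{sec:normalization}. Hence \(w_{n+q}=w_n\) for all \(n\ge N\).

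The main obstacle is the non-attained supremum in the rational case. A cleaner route is to first prove eventual periodicity directly: for rational \(\rho\), the values \(D_n\) lie in the discrete set \(\frac1q\Z\cap(C-1,C]\), so the supremum is attained. This makes the floor case the only one for rational \(\rho\). The irrational ceiling case then costs at most a single exceptional point, which is exactly what the integer \(N\) absorbs.
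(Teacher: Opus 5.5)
Your irrational case is sound up to a relabelling in Step~3, noted below. The rational case, however, has a genuine gap, and it originates in Step~1.

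\textbf{The gap.} You claim that every length-$L$ count lies in $\{\lfloor L\rho\rfloor,\lceil L\rho\rceil\}$, hence $|D_n-D_m|<1$. This is imported from Lemma~\ref{lem:periodic-balanced}, where $L\delta$ is an \emph{exact} average over a period. For a one-sided word you only get $a_L\le L\rho\le a_L+1$ as a limit. So when $L\rho\in\mathbb Z$, the counts may be $\{L\rho-1,L\rho\}$ or $\{L\rho,L\rho+1\}$ with the off value genuinely occurring, and only $|D_n-D_m|\le1$ survives. This never happens for irrational $\rho$, but it breaks Step~2 for rational $\rho$.

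For example, $w=10(01)^\infty$ is balanced with $\rho=\tfrac12$, and $D_n=0,\tfrac12,0,-\tfrac12,0,-\tfrac12,\dots$. So $C=\sup D_n=\tfrac12$ is attained, yet $C-D_n=1$ for every odd $n\ge3$. Hence $F(n)=\lfloor n\rho+C\rfloor$ fails for infinitely many $n$, which no choice of $N$ can absorb. Your ``cleaner route'' assertion $D_n\in(C-1,C]$ is false for the same reason. Since Step~4 reads periodicity off this representation, eventual $q$-periodicity is not established either.

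\textbf{The swapped cases in Step~3.} Independently, Step~3 has the two cases reversed. The floor representation yields jumps at $\lceil(\ell-C)/\rho\rceil-1$ and is the one requiring conversion. The ceiling representation yields exactly $\lfloor(\ell-C)/\rho\rfloor$, i.e.\ $S(1/\rho,-C/\rho)$. For irrational $\rho$ this costs at most one point either way. For rational $\rho$ it gives the wrong set: for $w=(10)^\infty$ the floor representation holds with $C=\tfrac12$, but $S(2,-1)$ is the odd integers while the ones of $w$ sit at the even integers.

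\textbf{How to close it.} The missing ingredient is direct control of the $q$-block counts, which is how the paper proceeds. Because $a_q\le p\le b_q$ and $b_q-a_q\le1$, the deviations $C_q(t)-p$ all have one sign. Telescoping along $a+q\mathbb Z_{\ge0}$ gives $\bigl|\sum_{j<h}(C_q(a+jq)-p)\bigr|=|C_{hq}(a)-hp|\le1$. So each residue class carries at most one nonzero deviation, and $C_q(t)=p$ for all large $t$. That is eventual $q$-periodicity; the paper then extends the periodic tail to a balanced bi-infinite word and invokes Lemma~\ref{lem:periodic-balanced}.

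Alternatively, your own scheme can be rescued by replacing $\sup_n D_n$ with the largest value $C^*$ that $D_n$ attains infinitely often.
\begin{itemize}
\item The same one-sign property rules out $D_n=C^*-1$ infinitely often. A rise from $C^*-1$ to $C^*$ followed by a fall back to $C^*-1$ would require $q$-blocks with counts $p+1$ and $p-1$.
\item Hence $C^*-D_n\in[0,1)$ on a tail, and the floor representation holds there.
\item Since $C^*\in\frac1q\mathbb Z$, the points $(\ell-C^*)/\rho$ lie in $\frac1p\mathbb Z$. A downward perturbation of $\beta$ by at most $1/p$ then converts the floor-case jumps into a genuine Beatty set.
\end{itemize}
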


\begin{proof}

For \(n\ge0\), put \(F(n)=\sum_{0\le j<n}w_j\).  For \(L\ge1\)
and \(t\ge0\), let \(C_L(t)=F(t+L)-F(t)\), the number of ones in
the length-\(L\) factor starting at \(t\).

Fix \(L\), and let \(a_L\) and \(b_L\) be the minimum and maximum of
these counts.  Balance gives \(b_L-a_L\le1\).  The average count in the
\(h\) disjoint factors starting at \(0,L,\ldots,(h-1)L\) is
\(F(hL)/h\), which tends to \(L\rho\).  Hence
\(a_L\le L\rho\le b_L\), and therefore
\[
 \lvert C_L(t)-L\rho\rvert\le1
 \qquad(L\ge1,\ t\ge0).
\]
This estimate also controls the prefix discrepancies
\(D_n=F(n)-n\rho\).  Indeed, for \(n>t\) we have
\(D_n-D_t=C_{n-t}(t)-(n-t)\rho\), so
\[
 \lvert D_n-D_t\rvert\le1\qquad(n,t\ge0).
\]

\smallskip

Suppose first that \(\rho\) is irrational.  Set
\(\theta=\sup_{n\ge0}D_n\), which is finite by the preceding bound.
Then \(0\le\theta-D_n\le1\).  The equality \(\theta-D_n=1\) can
hold for at most one index, since two such indices would give
\((n-t)\rho=F(n)-F(t)\in\Z\).

Beyond this possible exceptional index, we have
\(0\le\theta-D_n<1\).  Since
\(n\rho+\theta=F(n)+(\theta-D_n)\), it follows that
\(F(n)=\lfloor n\rho+\theta\rfloor\).  Taking successive differences
gives, for all sufficiently large \(n\),
\[
 w_n=\lfloor(n+1)\rho+\theta\rfloor-
     \lfloor n\rho+\theta\rfloor.
\]
The displayed difference equals one precisely when
\(n\rho+\theta<j\le(n+1)\rho+\theta\) for some \(j\in\Z\).
Thus the jump positions are \(\lceil(j-\theta)/\rho\rceil-1\).
For every \(j\) such that \((j-\theta)/\rho\) is not an integer, this
position equals \(\lfloor(j-\theta)/\rho\rfloor\).  Irrationality
allows at most one exceptional \(j\).  Consequently, on a sufficiently
far tail the set of ones agrees with \(S(1/\rho,-\theta/\rho)\), as
required.

\smallskip

Now suppose that \(\rho=p/q\) in lowest terms.  Since
\(a_q\le p\le b_q\) and \(b_q-a_q\le1\), the counts \(C_q(t)\)
all lie in \(\{p-1,p\}\) or all lie in \(\{p,p+1\}\).
Thus the integer deviations \(C_q(t)-p\) are either all nonnegative or
all nonpositive.

Fix a residue \(a\in\{0,\ldots,q-1\}\).  Along the progression
\(a+q\Z_{\ge0}\), these deviations satisfy the telescoping identity
\[
 \sum_{j=0}^{h-1}\bigl(C_q(a+jq)-p\bigr)
 =C_{hq}(a)-hp\qquad(h\ge1).
\]
The interval-count bound shows that the right side has absolute value at
most one.  All summands are integers, and no cancellation is possible.
Hence at most one summand can be nonzero.  There are only \(q\) residue classes, so \(C_q(t)=p\) for
all sufficiently large \(t\).  It follows that
\(w_{t+q}-w_t=C_q(t+1)-C_q(t)=0\) for all such \(t\), proving eventual
\(q\)-periodicity.

Extend this periodic tail to a binary word on \(\Z\).  Every finite
factor of the extension occurs arbitrarily far along the original tail;
hence the extension is balanced.  Its density is \(p/q\), so
Lemma~\ref{lem:periodic-balanced} identifies its support as a rational
Beatty set of modulus \(q/p\).  This proves the required representation
also in the rational case.
\end{proof}

\begin{proof}[Proof of Corollary~\ref{cor:balanced}]

Suppose that a balanced word with densities
\(\rho_1>\cdots>\rho_m>0\) exists.  Since the densities sum to one,
each lies in \((0,1)\).  Apply Lemma~\ref{lem:eventual-beatty} to each
letter indicator.  There are finitely many letters, so their Beatty
representations hold beyond a common threshold.  Thus the resulting sets
\(S(1/\rho_i,\beta_i)\) contain every sufficiently large integer exactly
once.

Their positive tails form an eventual covering family with \(m\ge3\)
pairwise distinct moduli.  Graham's
corollary~\cite[Corollary, p.~358]{Graham1973} therefore implies that
all these moduli are rational.  Choose a common period \(Q\) of the
resulting Beatty sets.  Their covering multiplicity is \(Q\)-periodic;
since it equals one on a tail, it equals one on all of \(\Z\).
Theorem~\ref{thm:main} now gives
\(\rho_i=2^{m-i}/(2^m-1)\) for every \(i\).

\smallskip

For sufficiency, we use the binary construction of Fraenkel~\cite{Fraenkel1973}.
Put \(Q=2^m-1\) and \(P_i=2^{m-i}\), and define
\[
 B_i=S(Q/P_i,1-2^{i-1})\qquad(1\le i\le m).
\]
Since \(Q/P_i=2^i-1/P_i\), the indices \(1\le j\le P_i\) give
\[
 \left\lfloor\frac{jQ}{P_i}+1-2^{i-1}\right\rfloor
 =2^ij-2^{i-1}=2^{i-1}(2j-1).
\]
These are the representatives of \(B_i\) in \(\{1,\ldots,Q\}\).
They are exactly the integers in that interval divisible by \(2^{i-1}\)
but not by \(2^i\), and therefore partition the interval as \(i\)
ranges from \(1\) to \(m\).

The \(Q\)-periodic sets \(B_i\) consequently partition \(\Z\) and
have densities \(P_i/Q\).  Their coding word is balanced by the
interval-count argument in Section~\ref{sec:deletion}.  Restricting it to
the nonnegative integers gives the required one-sided word.
\end{proof}

\section{Concluding remarks}

The structural point of the proof is the passage from Fourier cancellation
to divisor concentration.  A hypothetical subcritical partition makes every
row of the minimum-gcd layer pass, while the minimum-row slack identity forces
nearly all numerator weight onto divisors of \(Q-s\) or \(Q+s\).  For
\(s>1\), the least such divisor creates a fixed gap in a second row; for
\(s=1\), the only persistent replacement is a dyadic predecessor chain,
which is controlled by harmonic propagation.  This isolates the structural
obstruction uniformly in the number of parts.  The rigidity result proved
here is deliberately subcritical: it supplies exactly the conclusion needed
under \(3a<Q\), without asserting the full inverse-sine conjecture.

The three exact finite exclusions in Appendix~\ref{app:verification} close
the bounded cases left by this structural argument.  After the one-third
theorem, the proof is entirely combinatorial: balanced deletion, the
mechanical description of periodic balanced sets, and the rigid two-set
extension give the binary classification by induction.

\section{Acknowledgement}

The authors used GPT-5.6 Sol for resolving the cases \(m=8,9,10,11\), 
which inspired the authors' development of key ideas of the proof strategy, 
and for language polishing in the writing of this paper. 
OpenAI Codex assisted with the design, implementation, and execution of code for finite-case verification. 
\href{https://ziv.ink}{Ziv}, developed by Hu Tan and Xukun Wang, 
was used for supplementary mathematical exploration and checking. 
All mathematical ideas, arguments, proofs, and conclusions were determined and verified by the authors, 
who take full responsibility for the entire content of this work. The finite-case verification code and the accompanying Lean code are publicly available at \url{https://github.com/TanHu1999/fraenkel-conjecture-verification}.

\appendix

\section{Exact finite exclusions}\label{app:verification}

\subsection{Finite verification statements}

Three finite exclusions are logical steps in the proof.  This appendix states
what each program checks and why its output implies the corresponding
mathematical assertion.  Table~\ref{tab:contracts} records each obligation,
the result in which it is used, its input range, and its exact method and
outcome.  The subsequent subsections justify these implications;
Appendix~\ref{app:reproducibility} records the checksums and reproduction
instructions.

\begin{table}[ht]

\centering
\footnotesize
\begin{tabularx}{\textwidth}{@{}
>{\raggedright\arraybackslash}p{0.14\textwidth}
>{\raggedright\arraybackslash}p{0.20\textwidth}
>{\raggedright\arraybackslash}p{0.14\textwidth}
>{\raggedright\arraybackslash}p{0.14\textwidth}X@{}}
\toprule
Verification & Proof obligation & Used in & Input range & Method and result \\
\midrule
\(s>1\) finite box &
Nonexistence of an all-passing subcritical set with \(s>1\) &
Proposition~\ref{prop:s-positive-finite} &
\(7\le Q\le301\), eligible \(s>1\) &
Integer ceilings and exact \(0\)--\(1\) knapsack; \texttt{certificate passed} \\
Critical \(s=1\) refinement &
The critical \(s=1\) branch satisfies \(Q\le289\) &
Proposition~\ref{prop:critical-289} &
odd \(7\le Q\le5607\), eligible nondyadic \(y\) &
Exact \texttt{Fraction} envelopes; all survivors have \(Q\le289\) \\
Global \(s=1\) box &
No all-passing selection system exists for \(4\le Q\le715\) &
Theorem~\ref{thm:s-one} &
\(4\le Q\le715\) &
Integer ceilings, branching, and exact knapsack; zero feasible systems \\
\bottomrule
\end{tabularx}
\caption{Exact finite verifications used in the proof.}
\label{tab:contracts}
\end{table}

The first verifier closes the bounded \(s>1\) branch, the second places the
critical \(s=1\) branch inside the global range, and the third excludes every
remaining \(s=1\) system.  Together with the uniform estimates in the main
text, these three implications prove Theorem~\ref{thm:SC} and hence the
one-third theorem.  The exact constructions, safety arguments, and reported
outputs are recorded below.

\subsection{Correctness of the finite reductions}

The rigor of all three finite exclusions rests on four facts.

\begin{enumerate}[label=(\roman*)]
\item Every true kernel value is strictly below the rational envelope in
Proposition~\ref{prop:kernel}.  In the two integer verifiers, every resulting
rational weight is then rounded \emph{upward}.  Thus a genuine all-passing
row necessarily passes the integer inequality used by those verifiers.

\item Every knapsack is an exact \(0\)--\(1\) maximization under the only
relaxed global condition, the numerator budget.  If even this relaxed
maximum is too small, deleting the vertex is monotone and cannot delete a
vertex of a genuine counterexample.

\item The global \(s=1\) verifier performs both branches whenever monotone
deletion stabilizes.  Hence no feasible binary assignment is omitted.

\item The critical verifier uses Python's normalized rational numbers.  Its
comparisons with one and its reported minimum margin are exact.
\end{enumerate}

These principles justify the implications from the program outputs to
Proposition~\ref{prop:s-positive-finite},
Proposition~\ref{prop:critical-289}, and Theorem~\ref{thm:s-one}.  No other
program output is used as a logical premise in the proof.

\subsection{The \texorpdfstring{\(s>1\)}{s > 1} finite box}
\label{app:s-positive-verifier}

Fix admissible integers \(Q,s\) with \(7\le Q\le301\), and put
\[
 V_{Q,s}=\{a\in\Z:s\le a<Q/3,\ (a,Q)=1\}.
\]
For each \(b\in V_{Q,s}\), let
\(b^{-1}\in\{1,\ldots,Q-1\}\) be the inverse of \(b\) modulo \(Q\).
For distinct \(a,b\in V_{Q,s}\), let \(d_{ab}=\normQ{ab^{-1}}\) and,
with \(L=10^7\), define the upward-rounded integer weight
\begin{equation}\label{eq:rounded-weight}
 w_{ab}=\left\lceil L\left(\frac1{d_{ab}}
              +\frac{16d_{ab}}{7Q^2}\right)\right\rceil.
\end{equation}
For a current pool \(P\subseteq V_{Q,s}\), the relaxed score of \(a\in P\)
is the exact knapsack value
\[
 \operatorname{score}_P(a)=
 \max\left\{
   \sum_{b\in T}w_{ab}:
   T\subseteq P\setminus\{a\},\quad
   \sum_{b\in T}b\le Q-a
 \right\}.
\]
To compute it, list \(P\setminus\{a\}=\{b_1,\ldots,b_t\}\), set
\(C_a=Q-a\), and initialize \(D_0(0)=0\) and
\(D_0(u)=-\infty\) for \(1\le u\le C_a\).  The exact-load recurrence is
\[
 D_j(u)=
 \begin{cases}
  D_{j-1}(u),&u<b_j,\\
  \max\{D_{j-1}(u),D_{j-1}(u-b_j)+w_{ab_j}\},&u\ge b_j.
 \end{cases}
\]
Consequently
\(\operatorname{score}_P(a)=\max_{0\le u\le C_a}D_t(u)\).

The peeling is synchronous.  Starting from \(P_0=V_{Q,s}\), form
\[
 R_k=\{a\in P_k:\operatorname{score}_{P_k}(a)<L\},
 \qquad P_{k+1}=P_k\setminus R_k.
\]
Every score in a round is computed from the fixed set \(P_k\).  If a
genuine all-passing set \(C\) lies in \(P_k\), then
Proposition~\ref{prop:kernel} and \eqref{eq:rounded-weight} show that every
\(a\in C\) has score strictly larger than \(L\).  Thus \(C\subseteq
P_{k+1}\), and deletion of the prescribed minimum \(s\) rules out \(C\).

The script \path{sc_s_gt1_near_certificate.py} checks all \(8{,}916\)
eligible pairs \((Q,s)\) and deletes \(s\) in every case.  It records
\(182{,}653\) deletion events through the decisive rounds.  The smallest
margin for deleting \(s\) is \(7908/10^7\), attained at
\((Q,s)=(97,7)\).

\subsection{The critical rational refinement}
\label{app:critical-verifier}

For every odd \(Q\) with \(7\le Q\le5607\), the script
\path{sc_s1_critical_certificate.py} enumerates all nondyadic integers
\(y\ge5\) satisfying \(3y<Q\) and \(y\mid Q-1\) or \(y\mid Q+1\).
It evaluates the envelope \(U(Q,y)\) from \eqref{eq:U-total} with Python's
normalized \texttt{Fraction} class; no comparison is made in floating-point
arithmetic.

There are \(39{,}232\) eligible pairs.  Exactly \(155\) are not excluded
by the inequality \(U(Q,y)<1\), and every survivor has \(Q\le289\).  In the
decisive range \(Q\ge290\), the smallest positive exclusion margin is
\[
 \frac{1140922873693}{64523567545080},
 \qquad (Q,y)=(311,24).
\]
This is the exact finite assertion used in
Proposition~\ref{prop:critical-289}.

\subsection{The global \texorpdfstring{\(s=1\)}{s = 1} finite box}
\label{app:global-verifier}

The script \path{sc_s1_finite_certificate.py} exhausts the integer system
\eqref{eq:finite-system}.  At a search node, variables are partitioned into
mandatory, excluded, and undecided sets \(I,E,R\), respectively.  To bound
a row indexed by any \(a\notin E\), first force that row to be selected by
putting
\[
 I_a=I\cup\{a\},\qquad
 C_a=Q-\sum_{i\in I_a}i,\qquad
 B_a=\sum_{i\in I_a\setminus\{a\}}v_{ai}.
\]
If \(C_a<0\), this forced choice is impossible.  Otherwise its exact upper
bound over all completions of the node is
\[
 \operatorname{UB}_a=
 B_a+\max\left\{
   \sum_{b\in T}v_{ab}:
   T\subseteq R\setminus\{a\},\quad
   \sum_{b\in T}b\le C_a
 \right\}.
\]
This is the recurrence of Appendix~\ref{app:s-positive-verifier}, now
initialized by \(D_0(0)=B_a\).  In particular, an undecided \(a\) is
temporarily added to the mandatory set both in its budget and in its row
bound.

All weights are computed as the integer ceiling
\[
 v_{ab}=\left\lceil
 \frac{\Lambda(7Q^2+16d_{ab}^2)}{7Q^2d_{ab}}
 \right\rceil.
\]
There is therefore no floating-point evaluation.  Moreover, for
\(4\le Q\le715\) every row score is below \(4.8\times10^9\), well below the
signed \(64\)-bit limit.  A node closes when a mandatory row has upper bound
below \(\Lambda\); an optional vertex with such a bound is safely excluded.
After monotone deductions stabilize, the verifier branches on an undecided
vertex and explores both children.  Induction on the number of undecided
variables therefore shows that every binary assignment is either explored
or discarded by a valid upper bound.

All \(712\) denominators are infeasible.  The proof-tree node histogram is
\[
 \{1:704,\ 3:3,\ 5:4,\ 7:1\},
\]
meaning that \(704\) denominators close at the root, while respectively
three, four, and one denominators require trees with three, five, and seven
nodes.  The largest tree occurs at \(Q=43\).

\section{Reproducibility record}\label{app:reproducibility}

\subsection{Code and checksums}

Appendix~\ref{app:verification} states the three exact exclusions used in the
proof.  This appendix identifies their ancillary sources, gives their
checksums, and records reproduction instructions; it introduces no additional
mathematical hypothesis.

The three verifications in Table~\ref{tab:contracts} are implemented,
respectively, by \path{sc_s_gt1_near_certificate.py},
\path{sc_s1_critical_certificate.py}, and
\path{sc_s1_finite_certificate.py}.  These are the only programs used as
logical proof steps.  They are distributed as ancillary files with the
paper, and their source is short enough for direct inspection.
The first and third programs use NumPy signed \(64\)-bit integers; the middle
program uses Python's exact \texttt{Fraction} class.  Throughout the stated
ranges, the uniform integer bound proved above is below \(5\times10^9\), far
below \(2^{63}-1\).  The two dynamic-programming implementations
explicitly copy the previous layer before each in-place update, making the
\(0\)--\(1\) semantics transparent.

\begin{table}[ht]
\centering
\small
\begin{tabularx}{\textwidth}{@{}>{\raggedright\arraybackslash}p{0.35\textwidth}X@{}}
\toprule
File & SHA--256 \\
\midrule
\path{sc_s_gt1_near_certificate.py} &
\texttt{\seqsplit{76db33224acaae671b1d17f8be1d8dafeb7c493fa3665f7dad66e2a98d29dbfb}}\\
\path{sc_s1_critical_certificate.py} &
\texttt{\seqsplit{97c035f3a8fdc556e9acf0db9ca378590b8a445f2417dfb1a4b3a5fc15ad8e04}}\\
\path{sc_s1_finite_certificate.py} &
\texttt{\seqsplit{af94572b631fbea2356809af2696ceb51c656ca82c0d74f3fcae7ce8e16723f3}}\\
\bottomrule
\end{tabularx}
\caption{Logically necessary exact verifiers.}
\label{tab:certificates}
\end{table}

\subsection{Reproduction instructions}

The reference run used CPython 3.12.13 and NumPy 2.3.5.  From the directory
containing the scripts, the complete commands are
\begin{verbatim}
shasum -a 256 sc_s_gt1_near_certificate.py \
  sc_s1_critical_certificate.py sc_s1_finite_certificate.py
python3 -I sc_s_gt1_near_certificate.py
python3 -I sc_s1_critical_certificate.py
python3 -I sc_s1_finite_certificate.py
\end{verbatim}
The hashes produced by the first command are those in
Table~\ref{tab:certificates}.  Each of the three runs terminates with
\texttt{certificate passed}.  They cover, respectively, all \(8{,}916\)
eligible pairs \((Q,s)\), all \(39{,}232\) eligible critical pairs
\((Q,y)\), and all \(712\) denominators \(4\le Q\le715\).  The smallest
reported margins are \(7908/10^7\) in the \(s>1\) box and
\[
 \frac{1140922873693}{64523567545080}
\]
in the decisive range \(Q\ge290\) of the critical refinement.  The three
commands above reproduce these counts and margins directly.

\end{document}